\documentclass[oneside, a4paper,11pt,reqno]{amsart}

\textheight=23cm
\textwidth=16cm
\voffset=-1.5cm
\hoffset=-1.5cm

\usepackage[english]{babel}
\usepackage[colorlinks]{hyperref}
\usepackage{epsfig}
\usepackage{mathtools}
\usepackage{amsmath}

\usepackage{amsthm}

\usepackage{amssymb}
\usepackage{amscd}
\usepackage{tikz}
\usepackage{graphicx}
\usepackage{color}
\usepackage{verbatim}
\usepackage[utf8]{inputenc}
\usepackage{pgf,tikz}
\usepackage{mathrsfs}
\usetikzlibrary{arrows}

\newtheorem{thm}{Theorem}[section]
\newtheorem{lem}[thm]{Lemma}
\newtheorem{prop}[thm]{Proposition}

\newtheorem{hyp}[thm]{Hypothesis}

\newtheorem{rem}[thm]{Remark}

\newcommand{\Pa}{\mathcal P}
\newcommand{\B}{\mathcal B}

\newcommand{\Z}{\mathbb{Z}}
\newcommand{\N}{\mathbb{N}}

\newcommand{\A}{\mathcal{A}}

%notations
%changement de cellule %step function
\newcommand{\f}{\varphi}%flot
\newcommand{\mup}{\overline{\mu}}%mesure proba
\newcommand{\tp}{\overline{T}}% Transformation proba
\newcommand{\Mp}{\overline{M}}%sytème proba
% espace Holder
\newcommand{\R}{\mathcal{R}}% variété riemannienne

%autointersections

\newcommand{\e}{\epsilon_0}

\newcommand{\pM}{\hat{M}}
\newcommand{\pt}{\hat{T}}
\newcommand{\pmu}{\hat{\mup}}
\newcommand{\Fb}{\overline F}
\newcommand{\PP}{\mathbb{P}}

\newcommand{\F}{F}%correspond à la fonction \beta pour la perturbation

\newcommand{\Ng}{\textbf{N}}% vecteur (N_1,...,N_q)

\newcommand{\ls}{\underline{\textbf{l}}}

\newcommand{\qs}{\underline{q}}

\title{Averaging theorems for slow fast systems in $\Z$-extensions (discrete time)}
\address{Univ Brest,  CNRS  UMR 6205, Laboratoire de Mathématiques 
de Bretagne Atlantique.}
\author{Maxence Phalempin}

\begin{document}

\maketitle

\begin{abstract}
We study the averaging method for flows perturbed by a dynamical system preserving an infinite measure. Motivated by the case of perturbation by the collision dynamic on the finite horizon $\mathbb Z$-periodic Lorentz gas  and in view of future development, we establish our results in a general context of perturbation by $\mathbb Z$-extension over chaotic probability preserving dynamical systems.
As a by product, we prove limit theorems for non-stationary Birkhoff sums for such infinite measure preserving dynamical systems.
\end{abstract}

\section{introduction}
We call perturbed differential equation or equivalently slow fast system the following Cauchy problem

\begin{align}\label{pertx}
\frac{dx^{\epsilon}_t(x,\omega)}{dt}=F'(x^{\epsilon}_t(x,\omega),T^{\lfloor \frac t \epsilon \rfloor}(\omega)) ,\quad x_0^\epsilon(x,\omega)=x ,\quad \forall (t,x,\omega)\in [0,+\infty)\times\mathbb R^d\times M\, ,
\end{align}
where $ F' :\mathbb R^d \times M \rightarrow \mathbb R^d$ is a function, 
Lipschitz in its first coordinate (in $\mathbb R^d$) and where $T:M\rightarrow M$ is a  measurable map on a measurable space $M$ and preserving a measure $\mu$ (i.e. $(M,T,\mu)$ is a measure preserving dynamical system). 
%over a the system $(M,T)$, 
In~\eqref{pertx}, the term $T^{\lfloor t/\epsilon \rfloor}$ represents the fast motion when $\epsilon \rightarrow 0^+$,
 whereas $x_t^\epsilon$ is considered as a slow motion. The behavior of $(x_t^\epsilon)_{t\geq 0}$ has been studied in many cases when the map $T$ preserves a probability measure $\mu$ and the resulting 
dynamical system has rapid mixing properties, we can notably cite the works done for exponentially mixing flows and transformations as Anosov flows or Billiard maps or ergodic toral automorphisms  by Anosov \cite{anosovslowfast}, Arnold \cite{arnoldequadiff}, Khasminskii \cite{kasminski}, Kifer \cite{kifer} and Pène \cite{peneESAIMPS}, Dolgopyat\cite{Dolgopyat} Chevyrev, Friz, Korepanov and Melbourne in \cite{korepanovslowfast}, among others. 
In these settings they proved that while the dynamic accelerates ($\epsilon \rightarrow0^+$), the perturbed solution $(x_t^\epsilon)_{t\geq 0}$ converges to some averaged motion $(w_t)_{t\geq 0}$ solution of an ordinary averaged differential equations  
\begin{align}\label{equadiffw}
\frac{dw_t(x
%,\omega
)}{dt}=\overline F(w_t(x
%,\omega
)) ,\quad w_0^\epsilon(x)=x ,\quad \forall (t,x)\in [0,+\infty)\times\mathbb R^d,
\end{align}
where $\overline F(x):=\int_{M} F'(x,\cdot)d\mu$. Actually they go further proving limit theorems for the error term $(e_t^\epsilon)_{t\geq 0}$ between the slow motion $(x_t^\epsilon)_{t\geq 0}$ and the averaged solution $(w_t)_{t\geq 0}$. 
%{\color{blue}!!!!}
In the present article, our motivation  is to study slow fast systems with fast dynamics $T^{\lfloor t/\epsilon\rfloor}$ preserving an infinite measure $\mu$. 
To our knowledge, this article contains the  first averaging results for perturbation by infinite measure preserving dynamical systems.  
%belonging to some model adapted to the $\Z$-periodic Lorentz gas. These systems represent the behaviour of a particle in a gas and the measure preserved by the flow in  this model is infinite. 
In this context, the previous definition of the averaged function $\overline F$ either
doesn't make sense in general 
(indeed it makes sense only if $F'(x,\cdot)$
is $\mu$-integrable, but then $F'(x,\cdot)-\overline F(x)$ is not $\mu$-integrable).
Thus, the classical technique consisting in decomposing the $F'$ in the sum of a drift $\overline F$ and a centered part $F'-\overline F$ in order to study the oscillations of the latter to obtain the speed of convergence and a limit theorem for the error term  $(e_t^\epsilon)_{t\geq 0}$ needs to be adapted.
%The determination of $(w_t)_{t\geq 0}$ and the study of the stochastic behavior of the corresponding error $(e_t^\epsilon)_{t\geq 0}$ when $\M$ is a $\Z$-extension over an hyperbolic dynamical systems then leads to 
At least two settings naturally arise
with two radically different behaviours. 
 %on which one can study perturbed differential equations.
%The case where $F'$ is always integrable with respect to the invariant measure which is investigated in \cite{PHDM22}.
A first one is to consider some case where $F'$ is integrable which is investigated in \cite[Chapter 8]{MPphd} (where a limit theorem is established, without averaging). 
A second one, that we investigate here, is to consider a differential equation~\eqref{pertx} 
obtained by a perturbation of a differential equation~\eqref{equadiffw}, by considering
a map $F'$
%has a perturbation 
%that 
%can be naturally split into a centered part 
given by a sum of a null-integral function 
$F$ and of the drift $\overline F$ appearing in~\eqref{equadiffw} (note the $\overline F$ is independent of the dynamics). In this context, 
we consider the solution $(x_t^\epsilon)_t$ of~\eqref{pertx} and the solution $(w_t)_t$
of~\eqref{equadiffw} and the error $e_t^\epsilon$
defined by
\[
e_t^\epsilon(x,\omega):=x_t^\epsilon(x,\omega)-w_t(x),\quad\forall (t,x,\omega)\in[0,+\infty)\mathbb R^d\times M\, .
\]
for all $T'>0$, 
%These two cases are studied separately because besides of the techniques involved, they hold different limit solution $(w_t)_{t\geq 0}$ and different speed of convergence for the error term $e_t^\epsilon$.
%Our motivation there is to study these moyennization properties on the $\Z$-periodic Lorentz gas,
%Although some slow fast systems have been studied on dynamical systems in infinite measure such as Pommeau Manneville dynamics  by ,
% and that stochastic properties such as CLT have been established in \cite{PT20} for $\Z^d$-periodic Lorentz gas, as far as I know, no moyennization result as 
%the following results are new for $\Z$-extensions of chaotic systems like the $\mathbb Z$-periodic Lorentz gas in finite horizon (see Section~\ref{secmodels} for a presentation of this model).
\begin{thm}[See Theorem \ref{thmequadif} for a more general statement]\label{THM1}
Let $\epsilon >0$, and $(M,T,\mu)$ be the collision dynamics associated to the $\Z$-periodic Lorentz gas with finite horizon (see Section~\ref{secmodels}) and suppose $F'(x,\omega)=\overline F(x)+F(x,\omega)$
%$:\mathbb R^d\times M \mapsto \mathbb R^d$ can be split into a 
%stationary part 
%central part
with $\overline F : \mathbb R^d \rightarrow \mathbb R^d$ and 
%a "centered" part 
$F: \mathbb R^d \times M \rightarrow \mathbb R^d$ such that $\int_M F(x,\cdot)d\mu=0$ for all $x\in\mathbb R^d$ and $F(\cdot,\omega)$ is uniformly  $C^2_b$ (uniformly bounded, and with uniformly bounded first and second order derivatives) 
%in its first coordinate 
and $F(x,\cdot)$ is H\"older continuous (with respect to the euclidean metric, or dynamically H\"older) and sharply vanishing at infinity.\footnote{The precise assumptions on $F$ are stated in Theorem~\ref{thmbirkhoff} where $\Vert\cdot\Vert_{lip}$ can be replaced by the 
H\"older norm.}
%when the second coordinate goes toward the extremity of the cylinder embedding the $\Z$-periodic Lorentz gas.
Then, for any $x \in \mathbb R^d$, the error term $(e_t^\epsilon(x,\cdot))_{t\in [0,T']}$ satisfies a limit theorem for the strong convergence in law:
\begin{align*}
(\epsilon^{-3/4}e_t^\epsilon(x,\cdot))_{t \in [0,T']}\underset{\epsilon \rightarrow 0}{\overset{\mathcal{L}_\mu,\|.\|_{\infty}}{\rightarrow}}(y_t(x))_{t \in [0,T']},
\end{align*}
where $\mathcal{L}_\mu$ denotes the strong convergence in law\footnote{This means the convergence in distribution with respect to every probability measure absolutely continuous with respect to $\mu$.} with respect to the measure $\mu$ and where $(y_t(x))_{t\geq 0}$ is the solution of the following stochastic differential equation
$$
dy_t(x)=\sqrt{a(w_s(x))}dB_{L'_s(0)}+D\overline F(w_s(x))y_sds,
$$
with $(B_t)_{t\geq 0}$ a multi-dimensional standard Brownian motion i.e a stochastic process of which the coordinates $B^{(i)}$ are independent uni-dimensional Brownian motions (B.m) of variance $1$.  The process $(L_s')_{s \geq 0}$ is a local time associated to a Brownian motion $(B_t')_{t \geq 0}$ 
independent of the previous process whereas $a: \mathbb{R}^d \rightarrow \mathbb{R}^{d^2}$ is a map such that for any $x\in \mathbb R^d$, $a(x)$ is a non negative symmetric matrix that can be interpreted as a local asymptotic variance in $x$ (see~\eqref{asymptvar2}).
\end{thm}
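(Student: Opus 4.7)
The plan is to derive an integral equation for $e_t^\epsilon$, isolate a linear Gronwall-type term from a driving non-stationary Birkhoff sum along the averaged orbit $(w_s)_{s\in[0,T']}$, and then invoke the limit theorem for non-stationary Birkhoff sums (Theorem~\ref{thmbirkhoff}) to identify the limit.

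\textbf{Derivation and linearization.} Subtracting \eqref{equadiffw} from \eqref{pertx} and using $F'=\overline F+F$ gives
\[
e_t^\epsilon=\int_0^t\bigl[\overline F(x_s^\epsilon)-\overline F(w_s)\bigr]\,ds+\int_0^t F(x_s^\epsilon,T^{\lfloor s/\epsilon\rfloor}\omega)\,ds.
\]
Taylor-expanding $\overline F$ and $F(\cdot,\omega)$ to first order around $w_s$ (both are $C^2_b$ in space) yields
\[
e_t^\epsilon=\int_0^t D\overline F(w_s)\,e_s^\epsilon\,ds+\int_0^t F(w_s,T^{\lfloor s/\epsilon\rfloor}\omega)\,ds+r_t^\epsilon,
\]
where $r_t^\epsilon$ collects the $O(|e_s^\epsilon|^2)$ quadratic corrections and the cross term $\int_0^t D_xF(w_s,T^{\lfloor s/\epsilon\rfloor}\omega)\,e_s^\epsilon\,ds$.

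\textbf{Reduction to a non-stationary Birkhoff sum.} Discretize the driving integral into intervals $[k\epsilon,(k+1)\epsilon)$; using the Taylor expansion of $s\mapsto w_s$ around $w_{k\epsilon}$ together with $\int F(x,\cdot)\,d\mu=0$, one can replace the integral by $\epsilon\sum_{k=0}^{\lfloor t/\epsilon\rfloor-1}F(w_{k\epsilon},T^k\omega)$ up to a lower-order error (the naive $O(\epsilon)$ bound is improved after a further Birkhoff-sum cancellation on the derivative). Theorem~\ref{thmbirkhoff} applied to the non-stationary Birkhoff sum
\[
\Sigma_t^\epsilon:=\epsilon^{1/4}\sum_{k=0}^{\lfloor t/\epsilon\rfloor-1}F(w_{k\epsilon},T^k\omega)
\]
then delivers the functional strong-in-law convergence of $(\Sigma_t^\epsilon)_{t\in[0,T']}$ toward $\int_0^t\sqrt{a(w_s)}\,dB_{L_s'(0)}$. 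Differentiating $\int F(x,\cdot)\,d\mu=0$ shows that $D_xF(x,\cdot)$ also has zero $\mu$-mean, so the same theorem applies to it; combined with a bootstrap a priori bound $\sup_{t\le T'}|e_t^\epsilon|=O(\epsilon^{3/4})$ (itself obtained by a preliminary application of the theorem to $F$), this shows the cross term is $O(\epsilon^{3/2})=o(\epsilon^{3/4})$. The quadratic remainder is $O(\epsilon^{3/2})$ by the same a priori bound.

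\textbf{Passage to the limit.} Dividing through by $\epsilon^{3/4}$ gives
\[
\epsilon^{-3/4}e_t^\epsilon=\int_0^t D\overline F(w_s)\bigl[\epsilon^{-3/4}e_s^\epsilon\bigr]\,ds+\Sigma_t^\epsilon+o(1).
\]
The linear integral operator on the right-hand side is bounded on $C([0,T'])$, so a continuous-mapping argument identifies the joint limit of $(\epsilon^{-3/4}e_t^\epsilon,\Sigma_t^\epsilon)$ as the unique solution of
\[
y_t=\int_0^t D\overline F(w_s)\,y_s\,ds+\int_0^t\sqrt{a(w_s)}\,dB_{L_s'(0)},
\]
which is the SDE in the statement. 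The main obstacle will be establishing the sharp bootstrap a priori bound on $\sup_t|e_t^\epsilon|$, and checking that Theorem~\ref{thmbirkhoff} applies verbatim to $D_xF(w_s,\cdot)$ under the stated regularity of $F$---the technical estimates involve a discrete integration-by-parts that absorbs the cross term into a genuinely lower-order contribution.
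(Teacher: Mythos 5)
Your overall plan is the one the paper follows: linearize the ODE, isolate the Gr\"onwall term $\int_0^t D\overline F(w_s)\,e_s^\epsilon\,ds$, feed Theorem~\ref{thmbirkhoff} into the driving term, and identify the limit through a continuous-mapping argument. However, two points in your argument are genuine gaps.

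First, you invoke a bootstrap a priori bound $\sup_{t\le T'}|e_t^\epsilon|=O(\epsilon^{3/4})$ ``obtained by a preliminary application of the theorem to $F$''. Theorem~\ref{thmbirkhoff} is a convergence in distribution; it yields no almost-sure or pointwise-in-$\omega$ estimate. What one actually proves (Lemma~\ref{errlp}) is the $L^p_{\mup}$-moment bound $\sup_{t}\|\epsilon^{-3/4}e_t^\epsilon\|_{L^p_{\mup}}=O(1)$, and this comes not from the limit theorem but from the multi-time moment estimate of Lemma~\ref{encadprod} together with Gr\"onwall. Your later error bookkeeping must be done in $L^p$ for the same reason.

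Second, the treatment of the cross term $\int_0^t D_xF(w_s,T^{\lfloor s/\epsilon\rfloor}\omega)\,e_s^\epsilon\,ds$ is not resolved. The naive bound is $O(\epsilon^{3/4})$, the same order as the driving signal, so it is not negligible a priori; the fact that $D_xF(x,\cdot)$ has zero $\mu$-mean does not by itself improve this because $e_s^\epsilon$ is random and correlated with the fast dynamics at earlier times. The paper's device is to introduce an auxiliary process $y_t^\epsilon:=\mathcal F\bigl((v_\cdot^\epsilon)\bigr)$ (an explicit linear functional of the perturbed Birkhoff sum $v_t^\epsilon$) and to control the difference $\epsilon^{-3/4}e_t^\epsilon-y_t^\epsilon$ by Gr\"onwall; the cross term then appears with $y_s^\epsilon$ instead of $e_s^\epsilon$, becomes a genuine double integral of products $D_1F(\cdot)\cdot F(\cdot)$ along the orbit, and is bounded by Lemma~\ref{encadprod}. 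Some such decoupling step must replace your ``discrete integration-by-parts'', which as written does not handle the randomness of $e_s^\epsilon$. Finally, your sketch proves convergence only with respect to $\mup$; to get the \emph{strong} convergence in law (with respect to any probability absolutely continuous w.r.t.\ $\mu$) one still needs the Zweim\"uller-type argument that the paper carries out at the end of Section~\ref{secpreuvedif}.
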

This result relies on the following key statement which is interesting in itself since it provides the stochastic convergence for perturbed ergodic sums and in particular implies the central limit theorem on $\Z$-periodic Lorentz gas as stated in \cite{PT20} (for a more general class of observables than the one considered in~\cite{PT20} which were
locally constant) by considering the case when  $F(x,\omega)=F(\omega)$ does not depends on its first coordinate.
 \begin{thm}[see Theorem \ref{thmbirkhoff} page \pageref{thmbirkhoff}]\label{THM2}
 Under the same assumptions on the map $F$ %on the $\Z$-periodic Lorentz gas $(M,T,\mu)$ as the one stated in the previous theorem, 
as in the above Theorem~\ref{THM1}, 
for any $t\in [0,+\infty)$, we denote, for any $(x,\omega) \in\mathbb R^d\times M$,  by $(v_t^\epsilon(x,\omega))_{t\geq 0}$ the \textbf{perturbed Birkhoff sum} defined as follows
%for any $x\in \mathbb R^d$ and $\omega \in M$,
$$
v_t^\epsilon(x,\omega):=\epsilon^{1/4}\int_0^{t/\epsilon} F(w_{\epsilon s}(x),T^{\lfloor s \rfloor}\omega)\, ds\, .
$$
Then the process $(v_t^\epsilon(x,\cdot))_{t\geq 0}$  satisfies the following functional central limit theorem for any $T \geq 0$ and $x \in \mathbb{R}^d$,
\begin{align*}
\left(v_t^{\epsilon}(x,\cdot)\right)_{t \in [0,T]} \underset{\epsilon \rightarrow 0}{\overset{\mathcal{L}_{\mu}, \|.\|_\infty}{\rightarrow}} \left(\int_0^t \sqrt{a(w_s(x))}dB_{L_s'(0)}\right)_{t \in [0,T]}\, ,
\end{align*}
%v_t^{\epsilon}(x,\cdot)$ converge fortement en loi vers $\mathcal $\\

with $(B_t)_{t\geq 0}$ and $(L'_t)_{t\geq 0}$ the same processes as in the previous theorem.
\end{thm}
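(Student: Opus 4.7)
The plan is to prove the functional CLT by discretizing the integral, reducing to a block-wise application of the scalar CLT for $\mathbb{Z}$-extensions of the finite-horizon Lorentz gas (as in \cite{PT20} and subsequent refinements), and then stitching the pieces together to recover the stochastic integral limit. First, replace the integral by a Birkhoff sum: since $F(\cdot,\omega)$ and $s\mapsto w_s(x)$ are Lipschitz uniformly in $\omega$,
\[
v_t^\epsilon(x,\omega)\;=\;\epsilon^{1/4}\sum_{k=0}^{\lfloor t/\epsilon\rfloor-1} F(w_{\epsilon k}(x),T^k\omega)\;+\;R^{\epsilon,1}_t(x,\omega),
\]
with $\|R^{\epsilon,1}_t\|_\infty=O(t\epsilon^{1/4})$, so the discretization is harmless in the limit.

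Next, fix a mesh $\delta>0$ and partition $[0,t]$ into blocks $I_j=[j\delta,(j+1)\delta)$. On each $I_j$, approximate the slowly-varying observable $F(w_{\epsilon k}(x),\cdot)$ by its frozen version $F(w_{j\delta}(x),\cdot)$, producing the block sums
\[
S^\epsilon_j(\omega) := \epsilon^{1/4}\sum_{k\colon\epsilon k\in I_j} F(w_{j\delta}(x),T^k\omega).
\]
A first-order Taylor expansion bounds the resulting block error by
\[
\epsilon^{1/4}\sum_{k\colon\epsilon k\in I_j}(w_{\epsilon k}(x)-w_{j\delta}(x))\cdot D_x F(w_{j\delta}(x),T^k\omega)+O(\delta^2),
\]
and the key observation is that differentiating $\int_M F(y,\cdot)\,d\mu=0$ in $y$ yields $\int_M D_x F(y,\cdot)\,d\mu=0$, so the inner sum is itself a zero-mean Birkhoff sum obeying the $\mathbb{Z}$-extension CLT, of order $(\delta/\epsilon)^{1/4}$. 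A summation-by-parts argument (using that the weights $w_{\epsilon k}-w_{j\delta}$ have magnitude $O(\delta)$ and increments $O(\epsilon)$) gives a block error of order $O(\delta^{5/4})$ in $L^2_\mu$, which aggregates over the $t/\delta$ approximately independent blocks to $O(\sqrt{t}\,\delta^{3/4})\to 0$ as $\delta\to 0$.

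Third, invoke the functional CLT for Birkhoff sums on the $\mathbb{Z}$-periodic Lorentz gas (\cite{PT20} and its H\"older extensions): for any null-integral, (dynamically) H\"older observable $f$,
\[
\left(n^{-1/4}\sum_{k=0}^{\lfloor ns\rfloor}f(T^k\omega)\right)_{s\ge 0}\;\xrightarrow[n\to\infty]{\mathcal L_\mu}\;\left(\sqrt{\Sigma(f)}\,B_{L'_s(0)}\right)_{s\ge 0},
\]
where $B,B'$ are independent standard Brownian motions, $L'_s(0)$ is the local time of $B'$ at $0$, and $\Sigma(F(y,\cdot))=a(y)$ is the asymptotic covariance of \eqref{asymptvar2}. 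Apply this \emph{jointly} to the finite family $\{F(w_{j\delta}(x),\cdot)\}_j$, with a single pair $(B,B')$ driving all limits, to obtain
\[
(S^\epsilon_j)_j\xrightarrow[\epsilon\to 0]{\mathcal L_\mu}\left(\sqrt{a(w_{j\delta}(x))}\bigl[B_{L'_{(j+1)\delta}(0)}-B_{L'_{j\delta}(0)}\bigr]\right)_j.
\]

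Finally, letting $\delta\to 0$, the Riemann-sum-type expression on the right converges almost surely to the stochastic integral $\int_0^t\sqrt{a(w_s(x))}\,dB_{L'_s(0)}$ by continuity of $s\mapsto w_s(x)$ and of $s\mapsto L'_s(0)$; a diagonal extraction in $(\epsilon,\delta)$ together with the block-error bound of Step~2 identifies the limit of $v_t^\epsilon(x,\cdot)$ with this integral. The strengthening from convergence in law under $\mu$ to strong convergence under any probability measure absolutely continuous w.r.t.\ $\mu$ follows by the classical Eagleson/Zweim\"uller argument. The main obstacle is the joint, vector-valued version of the scalar $\mathbb{Z}$-extension CLT required in Step~3: one must revisit the operator-renewal / Young-tower machinery underlying \cite{PT20} to verify that the limits for different frozen parameters $y=w_{j\delta}(x)$ are indeed driven by a common pair $(B,B')$, and that the covariance $a(y)$ depends continuously on $y$ — precisely the content Theorem~\ref{thmbirkhoff} in the body of the paper is designed to establish.
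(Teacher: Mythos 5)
Your proposal contains a genuine gap, and it is one you yourself flag in the last paragraph: Step~3 ``invokes'' a joint, vector-valued functional CLT for a finite family of frozen H\"older observables $\{F(w_{j\delta}(x),\cdot)\}_j$ on the $\Z$-periodic Lorentz gas, driven by a \emph{common} pair $(B,B')$. No such result is available off the shelf: \cite{PT20}, which you cite, proves the scalar CLT only for \emph{locally constant} observables (as the present paper explicitly notes), and the extension to H\"older observables, together with the joint convergence under a single driving pair $(B,B')$, is precisely what Theorem~\ref{thmbirkhoff} is designed to establish. Invoking it renders the argument circular. Even if one grants a scalar H\"older CLT, the joint convergence across blocks is non-trivial because the blocks are strongly correlated through the underlying $\Z$-valued random walk $S_n\phi$ (whose local behaviour is what produces the shared local time $L_s'(0)$); a block-by-block application of a marginal CLT cannot produce this coupling. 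Your Step~2 error estimate is also under-justified: the Taylor remainder is not a pure Birkhoff sum since the weights $w_{\epsilon k}(x)-w_{j\delta}(x)$ vary with $k$, and in the infinite-measure setting the $n^{1/4}$ scaling means that the $L^{2p}$ moment bounds you need are themselves delicate (they are the content of Lemma~\ref{encadprod} in the paper, not a routine consequence of a classical CLT).

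The paper takes a quite different, self-contained route. First it discretizes $v^\epsilon_t$ to a perturbed Birkhoff sum $\tilde v^\epsilon_t$ (this step matches your Step~1). Then, instead of block-freezing plus a black-box CLT, it directly computes all moments $E_{\mup}((\tilde v^\epsilon_t)^N)$, lifts the observables to a Young tower via Hypothesis~\ref{H2} and Lemma~\ref{aproxpro}, uses the spectral gap of the perturbed transfer operator $P_u$ (Proposition~\ref{spectre}) and a Doeblin--Fortet-type estimate (Lemma~\ref{anisop}) to decorrelate the terms, and carries out a combinatorial analysis of which index configurations survive in the limit, identifying the limiting moments with those of $\mathcal N\,(\int_0^ta(w_s(x))\,dL'_s(0))^{1/2}$ and concluding by Carleman's criterion. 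Finite-dimensional distributions (Section~\ref{sectfinidimbirk}) and tightness (Section~\ref{sectbirkhofftens}, via the moment bound of Lemma~\ref{encadprod}) are handled separately. Your block-freezing reduction is a reasonable heuristic, but without independently establishing the joint CLT it does not constitute a proof, and establishing it would require essentially all of the operator-theoretic and combinatorial machinery the paper deploys.
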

The present article is organized as follows. The first section is dedicated to the detailed example of the collision dynamics associated to the Lorentz gas. Section \ref{secmodels} presents the general settings and the results under abstract hypotheses.
Section~\ref{secZext} contains the proof of the limit theorem
for perturbed Birkhoff sums (proof of Theorem~\ref{thmbirkhoff} which generalizes  Theorem~\ref{THM2}) and section~\ref{secbirk} the averaging result for transformations (proof of Theorem~\ref{thmequadif} which generalizes
Theorem~\ref{THM1}).

\section{Collision dynamics associated to $\Z$-periodic Lorentz gas with finite horizon.}\label{secmodels}

%For further purpose
We present here the Billard transformation associated to the collision dynamics of the $\Z$-periodic Lorentz gas $(M,T,\mu)$ (this dynamical system is also called the 
%along with the associated  discrete time transformation (
discrete time $\Z$-periodic Lorentz gas). 
%The $\Z$-periodic Lorentz gas is a measure invariant dynamical system $(\M, \f_t, \LG)$ driven by a billiard flow which
Consider a domain  $\R_0$ corresponding to the infinite flat cylinder $\mathbb R\times\mathbb T$ (where $\mathbb T:= \mathbb{R}/\Z$)  doted of open convex obstacles belonging to a periodic, locally finite, family $\{O_m+(l,0), l\in \Z, m\in I\}$  with $C^3$ boundary, positive curvature and with pairwise disjoint closures, placed along the cylinder. So
$$
\R_0:=(\mathbb T\times \mathbb{R})\backslash \bigcup_{m,l}(O_m+(l,0)).
$$
We assume furthermore that the horizon is finite horizon, i.e that $\R_0$ contains no line.
The dynamical system $(M,T,\mu)$ describes the dynamics at collision times of a point particle moving in $\R_0$, going straight inside $\R_0$ and colliding against the obstacles
according to the Snell-Descartes reflection law (the reflected angle is equal to the incident angle).
The phase space $M$ is the set of post-collisional unit vectors, i.e. the set of couples of (position,direction of its speed) based on $\partial \R_0$ and pointing into $\R_0$. Formally, $M$ is the set of $(q,v)\in\partial\R_0\times\mathbb S^1$
(where $\mathbb{S}^1$ is the unit circle) such that $\langle v, n(q) \rangle \geq 0$ where $n(q)$ denotes the unit normal vector to $\partial \R_0$ in $q$ directing into $\R_0$.

%The point particle 
%behavior of a point particle moving freely in a specific domain $\R_0$. 
%$\R_0$ corresponds to a cylindrical surface doted of open convex obstacles belonging to a finite family $\{O_m+(l,0), l\in \Z, m\in I\}$  with $C^3$ boundary, and positive curvature periodically placed along the cylinder.  

\begin{figure}[!htbp] 
\begin{center}
\definecolor{ccqqqq}{rgb}{0.8,0.,0.}
\definecolor{qqzzqq}{rgb}{0.,0.6,0.}
\definecolor{ffqqqq}{rgb}{1.,0.,0.}
\begin{tikzpicture}[line cap=round,line join=round,>=triangle 45,x=0.5026463381935778cm,y=0.31089064543404965cm]
\clip(-15.209033729977385,-5.114520111870424) rectangle (6.05059140582434,5.043519325003994);
\draw [line width=2.pt] (0.,0.) ellipse (1.0052926763871557cm and 0.6217812908680993cm);
\draw [shift={(-5.,5.)},line width=2.pt]  plot[domain=-1.5707963267948966:0.,variable=\t]({1.*3.8*cos(\t r)+0.*3.8*sin(\t r)},{0.*3.8*cos(\t r)+1.*3.8*sin(\t r)});
\draw [shift={(-5.,-5.)},line width=2.pt]  plot[domain=0.:1.5707963267948966,variable=\t]({1.*3.8*cos(\t r)+0.*3.8*sin(\t r)},{0.*3.8*cos(\t r)+1.*3.8*sin(\t r)});
\draw [shift={(5.,-5.)},line width=2.pt]  plot[domain=1.5707963267948966:3.141592653589793,variable=\t]({1.*3.8*cos(\t r)+0.*3.8*sin(\t r)},{0.*3.8*cos(\t r)+1.*3.8*sin(\t r)});
\draw [shift={(5.,5.)},line width=2.pt]  plot[domain=3.141592653589793:4.71238898038469,variable=\t]({1.*3.8*cos(\t r)+0.*3.8*sin(\t r)},{0.*3.8*cos(\t r)+1.*3.8*sin(\t r)});
\draw [shift={(-15.,5.)},line width=2.pt]  plot[domain=-1.5707963267948966:0.,variable=\t]({1.*3.8*cos(\t r)+0.*3.8*sin(\t r)},{0.*3.8*cos(\t r)+1.*3.8*sin(\t r)});
\draw [shift={(-15.,-5.)},line width=2.pt]  plot[domain=0.:1.5707963267948966,variable=\t]({1.*3.8*cos(\t r)+0.*3.8*sin(\t r)},{0.*3.8*cos(\t r)+1.*3.8*sin(\t r)});
\draw [shift={(-5.,-5.)},line width=2.pt]  plot[domain=1.5707963267948966:3.141592653589793,variable=\t]({1.*3.8*cos(\t r)+0.*3.8*sin(\t r)},{0.*3.8*cos(\t r)+1.*3.8*sin(\t r)});
\draw [shift={(-5.,5.)},line width=2.pt]  plot[domain=3.141592653589793:4.71238898038469,variable=\t]({1.*3.8*cos(\t r)+0.*3.8*sin(\t r)},{0.*3.8*cos(\t r)+1.*3.8*sin(\t r)});
\draw [shift={(5.,5.)},line width=2.pt]  plot[domain=-1.5707963267948966:0.,variable=\t]({1.*3.8*cos(\t r)+0.*3.8*sin(\t r)},{0.*3.8*cos(\t r)+1.*3.8*sin(\t r)});
\draw [shift={(5.,-5.)},line width=2.pt]  plot[domain=0.:1.5707963267948966,variable=\t]({1.*3.8*cos(\t r)+0.*3.8*sin(\t r)},{0.*3.8*cos(\t r)+1.*3.8*sin(\t r)});
\draw [shift={(15.,-5.)},line width=2.pt]  plot[domain=1.5707963267948966:3.141592653589793,variable=\t]({1.*3.8*cos(\t r)+0.*3.8*sin(\t r)},{0.*3.8*cos(\t r)+1.*3.8*sin(\t r)});
\draw [shift={(15.,5.)},line width=2.pt]  plot[domain=3.141592653589793:4.71238898038469,variable=\t]({1.*3.8*cos(\t r)+0.*3.8*sin(\t r)},{0.*3.8*cos(\t r)+1.*3.8*sin(\t r)});
\draw [line width=2.pt] (10.,0.) ellipse (1.0052926763871557cm and 0.6217812908680993cm);
\draw [line width=2.pt] (-10.,0.) ellipse (1.0052926763871557cm and 0.6217812908680993cm);
\draw [line width=2.pt,color=ffqqqq] (-15.,5.)-- (15.,5.);
\draw [line width=2.pt,color=ffqqqq] (-15.,-5.)-- (15.,-5.);
\draw (-1.7137896049836079,0.5913267626151101) node[anchor=north west] {$O_1+0$};
\draw (-10.91139355019933,0.6444153856755176) node[anchor=north west] {$O_1-1$};
\draw (8.160694284252319,0.471877360729193) node[anchor=north west] {$O_1+1$};
\draw (3.648161324117621,4.320802532608742) node[anchor=north west] {$O_2+1$};
\draw (3.6747056356478254,-3.1779654746738273) node[anchor=north west] {$O_2+1$};
\draw (12.460872752145384,4.45352409025976) node[anchor=north west] {$O_2+2$};
\draw (12.447600596380282,-3.3106870323248465) node[anchor=north west] {$O_2+2$};
\draw (-6.7041201726619795,4.5464291806154735) node[anchor=north west] {$O_2+0$};
\draw (-6.584670770776061,-3.2575984092644386) node[anchor=north west] {$O_2+0$};
\draw (-14.109983089588924,4.413707622964455) node[anchor=north west] {$O_2-1$};
\draw (-14.388698360656068,-3.2708705650295404) node[anchor=north west] {$O_2-1$};
\draw (-8.787848627783001,2.9006818657428393) node[anchor=north west] {$x$};
\draw [line width=2.pt,color=qqzzqq] (-8.306144483740791,1.0634159534435474)-- (-5.226459494154057,-1.2067538838736698);
\draw [line width=2.pt,color=ccqqqq] (-5.226459494154057,-1.2067538838736698)-- (-4.077083771656735,-0.7135464381838353);
\draw [line width=2.pt,color=ccqqqq] (-4.077083771656735,-0.7135464381838353)-- (-4.260736560024999,-0.7424254302813408);
\draw [line width=2.pt,color=ccqqqq] (-4.077083771656735,-0.7135464381838353)-- (-4.214720008370887,-0.8322138237527775);
\draw [line width=2.pt,color=ccqqqq] (-8.306144483740791,1.0634159534435474)-- (-7.333706969194213,0.3465899398532405);
\draw [line width=2.pt,color=ccqqqq] (-7.333706969194213,0.3465899398532405)-- (-7.519660452465569,0.6325111396647947);
\draw [line width=2.pt,color=ccqqqq] (-7.333706969194213,0.3465899398532405)-- (-7.686994483514875,0.4472484624316367);
\draw (-3.9700560850509565,1.215118083574899) node[anchor=north west] {$T(x)$};
\end{tikzpicture}
  \caption{\footnotesize{Illustration of the map $T$ corresponding to the dynamics of the discrete time $\Z$-periodic Lorentz gas in finite horizon, with two periodic shape of obstacles.}} \label{fig:GG08}
    \end{center}
      \vskip -0.5cm
\end{figure}
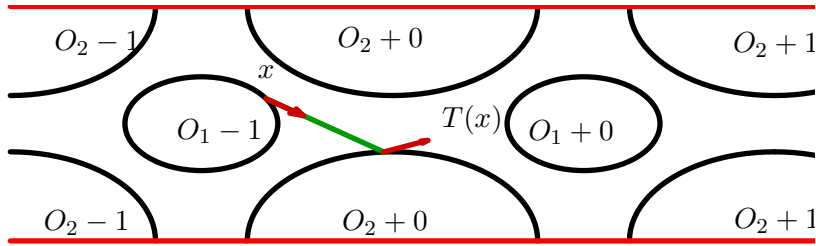

The transformation 
$T$ maps a couple (position/direction of the momentum) to the new couple (position/direction of the momentum) after one collision against an obstacle. This transformation $T$ preserves an infinite 
%SRB 
measure $\mu$  absolutely continuous with respect to the Lebesgue measure with an explicit $\mathbb Z$-periodic density (as recalled in~\cite[Section 2.1]{maxautointer})\\
%\cite[section 2.2]{MPphd}
The fact that the horizon is finite ensures that the free flight between two collisions is uniformly bounded. 
%In particular,  on each obstacle $O_i$ for $i\in I$, the range of the map $T$ on $\partial O_i \times \mathbb S$ is bounded (i.e there are finitely many $l_1,\dots,l_k\in \Z$ such that $T^{-1}\left(\partial \left(O_i+ (0,l_j)\right) \times \mathbb S\right)\cap \left(\partial O_i +(0,0)\right) \times \mathbb S \neq \emptyset$.\\ 
%Both systems, $\Z$-periodic Lorentz gas flow and t
%The billiard transformation preserves an infinite invariant measure and satisfies 
A crucial fact is that the dynamical system $(M,T,\mu)$
can be represented as a $\Z$-extension with centered bounded step function $\phi : \Mp \rightarrow \Z$ over the probabilistic dynamical system $(\Mp,\tp,\mup)$ called the Sinai Billiard. The Sinai billiard $(\Mp,\tp,\mup)$ is the quotient modulo $\Z$ of the previous system $(M,T,\mu)$. Thus $\tp$ is the billiard transformation with obstacles $\bar O_i$ on the two-dimensional flat torus $\mathbb T^2:=\mathbb R^2\times\mathbb Z^2$ (with $\bar O_i$ the image of $O_i$ by the canonical quotient map). The ergodicity and mixing of this Sinai Billiard $(\Mp,\tp,\mup)$ has been proved by Sinai in \cite{Sinai}, exponential mixing has been proved by Young via the construction of towers in \cite{young}.
The stochastic properties of the Lorentz gas $(M,T,\mu)$ have been studied in previous decades  using the chaotic properties of $(\Mp,\tp,\mup)$. This system was  proved to be recurrent in \cite{schmidt-z-rec}, ergodic (conservatively ergodic) in \cite{Sim89ergobilliard,penergodic} and to satisfy a central limit theorem for some class of observables in \cite{PT20}.
%The $\Z$-periodic billiard transformation with finite horizon can be seen itself \\
Given  some $\beta\in(0;1)$,
we endow the dynamical system $(\Mp,\tp,\mup)$ with the dynamical metric $d=d_\beta$ given by
\begin{align}\label{dynamicmetric}
\forall x, y \in \Mp,\quad
    d(x,y):=\beta^{s_0(x,y)},
\end{align}
 where $s_0: \Mp \times \Mp \rightarrow \N$ is the separation time such that for $x,y\in \Mp$, $s_0(x,y)\geq n$ iff 
$x$ and $y$ are in the same
continuity component of $T^i$ for any $i\in \{-n,\dots,n\}$. 
For any $\eta\in(0,1)$, there exists $\beta\in(0,1)$ such that
the class of Lipschitz functions with respect to $d_\beta$  comprises the class of $\eta$-H\"older functions for the usual euclidean metric on $\Mp$ (see \cite[Exercise 5.49 page 124]{chernov}). 
We also introduce, for $k,n \in \Z$, the partitions $\xi_k^n$  as of $\Mp$ such that for any $A\in \xi_{k}^n$ two elements $x,y\in A$ if their iterates $T^i x$ and $T^i y$ share the same obstacle ($\partial O_{m_i}\times \mathbb S$ for some $m_i\in I$) for any $i\in \{k,\dots,n\}$. In particular if $k=n$ and $A\in \xi_k^n$,
$$
x,y\in A \Rightarrow s_0(x,y)\geq n.
$$
Thus these sets are related to the 
the separation time $s_0$, which dominates the separation time in the "coding" of the Young Towers on which our main theorem are stated, Indeed maps $f:\Mp\rightarrow \mathbb R ^d$ that are constant on elements of $\xi_{-n}^n$ can be lifted to maps onto a Young Towers structure  (see \cite{young} or 
%\cite[Section 2.2]{MPphd}). 
\cite[Section 4]{maxautointer}).
Other stochastic properties have already been stated for the Sinai billiard system $(\Mp,\tp,\mup)$ such as Central limit theorem for smooth observables (including the step function $\Phi$) by Bunimovich, Sinai and Chernov (see \cite{sinaiclt}, \cite{buni_sinai_chernov}) and the already mentioned works of Kifer, Khasminiskii and Pène on perturbed differential equations (see \cite{kasminski}, \cite{kifer}, \cite{phdpene}) among others.

 %which satisfies the assumption \ref{H2}, with a step function $\phi :\Mp \rightarrow \Z$ which is bounded. Let $f: \mathbb{R}\times \Mp \times \Z \rightarrow \mathbb{R}$ satisfy the assumptions of theorem \ref{thmbirkhoff}. We also assume that for every $(\omega,a) \in M$, $f(\cdot,\omega,a)$ is differentiable (denote the derivative by $D_1f$) and satisfies.
\section{General results for $\mathbb Z$-extensions}
\label{secZext}
In this section, we introduce a general context of $\mathbb Z$-extension and establish general results in this context.
The fact that the $\mathbb Z$-periodic Lorentz gas in finite horizon introduced in Section~\ref{secmodels} satisfies our general assumptions has been proved in 
%\cite[Remark 9.1.6]{MPphd}
\cite[Section 4]{maxautointer} and will be recalled in remark \ref{remhyp} below.

\subsection{General model of $\Z$-extension.}

In this paper we consider a infinite measure preserving dynamical system $(M,T,\mu)$ described as a $\Z$-extension of an ergodic probabilistic dynamical system $(\Mp,\tp,\mup)$ with a centered bounded step function $\phi :\Mp \rightarrow \Z$. The dynamics on such a system is described by the following skew product for any $(\omega,m)\in \Mp \times \Z$ by :
\begin{align*}
    T(\omega,m)=(\tp (\omega),m+\phi(\omega)).
\end{align*}
When iterated, the dynamic brings the following identity
\begin{align*}
    T^n(\omega,m)=(\tp^n (\omega),m+S_n\phi(\omega)),
\end{align*}
where $S_n\phi:=\sum_{k=0}^{n-1}\phi\circ\tp^k$. 
We endow $(M,T)$ with the $T$-invariant measure $\mu$ defined on the $\Z$-extension $\Mp\times \Z$ is a product measure $\mu:=\mup \otimes m$ where $m$ is the counting measure on $\Z$. 
%The two hypotheses of $\phi$ being centered and $(\Mp,\tp,\mup)$ being ergodic ensures that 
The assumption that $\phi$ is centered (i.e $\int_{\Mp} \phi d\mup=0$) combined with the ergodicity of $(\Mp,\tp,\mup)$ ensures that the Birkhoff sum $S_n\phi$ is recurrent as a random walk on $\Z$. This implies that 
$(M,T,\mu)$ is a recurrent dynamical system (see \cite{schmidt-z-rec}) then additional properties such as mixing (which will be implied by our abstract settings \ref{H2}) will ensure the ergodic conservativity of such dynamical systems.

%\section{General results for $\mathbb Z$-extensions}
%We consider the general context of $\mathbb Z$-extension presented in Section~\ref{secZext} satisfying some technical assumptions satisfied by the $\mathbb Z$-periodic Lorentz gas and for the geodesic flow on a $\mathbb Z$-periodic negatively curved surface.
The main result here is given under some abstract hypotheses : The quotient probability invariant dynamical system $(\Mp,\tp,\mup)$ satisfies the following hypotheses \ref{H2} in the sense that it can be modeled to an exponentially  mixing Young tower  (described in hypothesis \ref{H1}).

\begin{hyp}%[H1]
\label{H1}%!!! Il existe un espace de Banch $\nu$ inclus dans un $L^p$ de norme tel que où $\|.\|_p \leq \|.\|_B$...
Let $(\pM,\pt,\pmu)$ be a probability preserving dynamical system and an application $\hat \phi: \pM \rightarrow \Z$. There exists $\beta\in]0,1[$ and $\varepsilon>0$ and a
complex Banach space $\B=\B_{\beta,\epsilon}$  
%d'une suite de partitions connexes $(\xi_{-k}^n)_{k,n}$ 
%satisfying the following properties
such that
%:\\
\begin{enumerate}
\item[(0)] the constant function $1_{\pM}$ is in $\B$ and 
%$E_{\pmu}(\cdot)$ is a continuous linear form on 
$\B$ is continuously included in $L^1_{\pmu}$.
\item[(1)] there is a partition $(\Delta_{i,k})_{i,k \in \N}$ of $\pM$ and a splitting time $s: \pM \times \pM \rightarrow \N$ such that for any $i,k \in \N$, and any $x,y \in \Delta_{i,k}$, $s(x,y)\geq 1$. In addition for any $x,y \in \pM$ such that $s(x,y)\geq 1$, $s(\pt x, \pt y)=s(x,y)-1$. %$(\pM,\pt,\pmu)$\\
\item[(2)] 
%there is $\beta \in ]0,1[$ and $\epsilon>0$ %et $p>1$
%such that 
the Banach space $(\B,\|.\|_{\B})$ is described as follows:
 %(and is continuously included in $L^1_{\pmu}$): 
 a function $f : \pM \rightarrow \mathbb R$ is in $\B$ if
%Pour tout $i,k \in \N$, $f$ est Lipschitz sur $\Delta_{i,k}$ au sens où 
$$
\|f\|_\B=\|f\|_\infty'+\|f\|_{lip}'<\infty
$$
with 
$$
\|f\|_{lip}'=\sup_{l,k \in \N} e^{-\epsilon l} \|f\|_{lip,\Delta_{l,k}} \textit{    et   }\|f\|_\infty'=\sup_{l,k \in \N}e^{-\epsilon l}\|f\|_{\infty,\Delta_{l,k}}.
$$
where $\|f\|_{lip,\Delta_{l,k}}:=\sup_{x,y \in \Delta_{l,k}}\frac{|f(x)-f(y)|}{\beta^{s(x,y)}}$ and $\|f\|_{\infty,\Delta_{l,k}}:=\|f_{\Delta_{l,k}}\|_{\infty}$.\\

%Notice that such a norm satisfies for any $p\in [1,\infty)$ : $\|f\|_{L^p_{\pmu}}\leq \|f\|_{\B}\|e^{\epsilon l(\cdot)}\|_{L^p_{\pmu}}$,

We will denote $l(x)=l$ if there is $k\in \N$ such that $x \in \Delta_{l,k}$.
%On suppose de plus que pour tout $f \in B$, 
%$$
%\|f\|_{L^p_{\pmu}}\leq \|f\|_B.
%$$
\item[(3)] the transfer operator $P$ of $(\pM,\pt,\pmu)$ satisfies for $f\in \B$ 
$$
Pf(x)=\sum_{w \in \pt^{-1}(\{x\})} e^{-h(w)}f(w)
$$
with $h\in \B$ a function Lipschitz with respect to $\beta^{s(\cdot,\cdot)}$, i.e.
%for the norm defined as follows\\ 
$$\|h\|_{lip}'=\sup_{i,k \in \N}\sup_{x,y \in \Delta_{i,k}}\frac{|h(x)-h(y)|}{\beta^{s(x,y)}}$$
is a finite quantity. Furthermore, for any $i,k\in \N$ and $x,y \in \Delta_{i,k}$  there is a bijection $X_0 :\pt^{-1}(\{x\})\rightarrow \pt^{-1}(\{y\})$ preserving the partition of $\hat M$, i.e.  $s(z,X_0(z))=1+s(x,y)$.
\item[(4)] For any $x,y \in \pM$ such that $s(x,y)>1$, $\hat{\phi}(x)=\hat{\phi}(y)$.  Denote $S_n:=\sum_{k=0}^{n-1}\hat \phi \circ \pt^k$. There is
$\beta_0>0$ such that or any $b \in ]0,\beta_0[$, there is $\alpha \in ]0,1[$, $C>0$ such that for any $f \in \B$, any positive integer $n$ and any $u\in [-b,b]$, the operator $P_u:=P(e^{iu\hat\phi}\cdot)$ satisfies
\begin{align}\label{eqspectrgap}
P_u^n(f):=P(e^{iu S_n}f)=\lambda_u^n\Pi_u(f)^n+N_u^n(f),    
\end{align}
where
\begin{itemize}
\item $\lambda_. \in C^4([-b,b], \mathbb C)$ satisfies $\lambda_u=1-\Sigma \frac {u^2} 2 +O(u^3)$ with $\Sigma:=\sum_{k\geq 0}E_{\hat{\mup}}( \hat \phi \circ \hat{T}^k \hat \phi)$
\item $u \mapsto  \Pi_u \in C^2([-b,b], L(\B))$  satisfies $\Pi_0(\cdot)=E_{\pmu}(\cdot)1$, in addition, for any $u \in [-b, b]$, $\Pi_u$ is a projection, thus $\Pi_u^n=\Pi_u$.
\item $N_u \in L(\B)$ satisfies $\|N_u^n\|\leq C \alpha^n$.
\end{itemize}
In addition, for any $u \in [-\pi, \pi]\backslash [-b,b]$, $\|P_u^n\| \leq C\alpha^n$.
\end{enumerate}
\end{hyp}

We now state the main hypothesis \ref{H2} which guarantees that $(\Mp,\tp,\mup)$ has a Young tower structure with Lipschitz  observables that can be approximated onto $(\pM,\pt,\pmu)$.
\begin{hyp}\label{H2}
The set $\Mp$ is a metric space with metric $d$. We define as follow the Lipschitz norm of an application $g:\Mp \rightarrow \mathbb{R}$, $\|g\|_{lip}:=\sup_{x,y \in \Mp}\frac{|g(x)-g(y)|}{d(x,y)}$.
We suppose that $(\Mp,\tp,\mup)$ and $(\pM,\pt,\pmu)$ are common factors of a dynamical system $(M_1,T_1,\mu_1)$ with respective projections $\pi_1$ and $\pi_2$. We suppose that there is a map 
 $\hat \phi :\pM \rightarrow \Z$ such that $(\pM,\pt,\pmu)$ and $\hat{\phi}$ satisfy Hypothesis \ref{H1} and $\phi\circ\pi_1=\hat\phi\circ\pi_2$, which implies by induction that
 %satisfasse 
$$
\overline{S_n}\circ \pi_1= S_n \circ \pi_2,
$$
for any $n \in \N$ where $\overline S_n:= \sum_{k=0}^{n-1}\phi\circ T^k$ and where the notation $S_n$ comes from hypothesis \ref{H1}).\\
In addition %e pour tout compact $K \subset \mathbb{R}^d$, 
there is $0<\beta_0<1$ and $C>0$ such that for any $n \in \mathbb{N}$, and for any Lipschitz bounded function $g : \Mp \rightarrow \mathbb{R}$ there is,
%$f(x,\tp^n(\cdot),\cdot)$ peut être approchée par une suite de fonctions bornées et lipschitzienne (uniformément en $x \in K$), $\left(g_n(x,.,\cdot)\right)_{n \in  \N}$ telle que  pour tout $a \in \Z$, 
for any $n \in \N$, a function $g_n$ $\pmu$-centered (in the sense that $E_{\pmu}(g_n(x,.a))=0$) satisfying :
\begin{align}\label{unif}
\|g\circ\tp^n \circ \pi_1-g_n\circ\pi_2 \|_{\infty} \leq C \beta_0^n\|g\|_{lip}
\end{align}

%Où la majoration est uniforme en le choix de $x \in K$.\\
%Enfin on suppose aussi que pour tout $n \in \N$ 
$$
\|g_n\|_{\infty} \leq \|g\|_{\infty}
$$
and such that for any  $x,y \in \pM$, 
\begin{align}\label{constant}
s(x,y) > 2n \Rightarrow g_n(x)=g_n(y).
\end{align}
\end{hyp}
%\marginnotemx{\\ def $\xi_0^n$}

\begin{rem}[Application to the periodic Lorentz gas]\label{remhyp}
We consider here the $\mathbb Z$-periodic Lorentz gas $(M,T,\mu)$ introduced in section~\ref{secmodels}.
%L'hypothèse d'une borne uniforme en les $x \in K$ dans \ref{unif} n'est pas triviale mais est vérifiée pour les des fonctions à $\F(\cdot,\cdot)$ à variable séparables sur 
The fact that Hypothesis~\ref{H1}  applies
with $(\pM,\pt,\pmu)$ the symbolic factor of Young Tower constructed by Young~\cite{young} for the Billiard transformation has been explained in 
%\cite[section 2.2]{MPphd}
\cite[section 4]{maxautointer}). More precisely, there is 
$\bar\beta\in]0,1[$ such that for any $\beta\in]\bar\beta,1[$, there exists $\bar\epsilon$ such that  Hypothesis~\ref{H1}
holds true for this $\beta$ and 
for any $\epsilon\in]0,\bar\epsilon[$. 
Let $\beta\in]\bar\beta,1[$.
We consider the dynamical metric  $d=d_\beta$ on $\Mp$ given by~\eqref{dynamicmetric}. 
 Let $g:\Mp\rightarrow\mathbb R$ be a Lipschitz function with respect to $d$. For any integer $n\ge 0$, set $$
h_n:=E_{\mup}(g|\xi_{-n}^n).
$$
Then the map $h_n(x,\tp^n(\cdot))$ is constant over the atoms of $\xi_0^{2n}$ and there is a map $g_n(x,\cdot): \pM \rightarrow \mathbb R^d$ such that
$$
h_n(x,\cdot)\circ\tp^n\circ \pi_1=g_n(x,\cdot)\circ \pi_2.
$$
Furthermore this map satisfies 
$$
\|g\circ\tp^n\circ \pi_1-g_n\circ \pi_2\|_{\infty} \leq \beta^n \|g\|_{lip},
$$
since $\sup_{A\in \xi_{-n}^n}\textit{diam}(A)\le \beta^n$, by definition of $d=d_\beta$. 
In addition $\|g_n\|_\infty=\|h_n\|_\infty\leq \|g\|_\infty$ and \eqref{constant} is satisfied.
\end{rem}

\subsection{Perturbed ergodic Birkhoff sum, result for transformation}\label{sec:perturbBirkhoff}

The next statement provides a  limit theorem for perturbed Birkhoff sums $\sum_{k\geq 0}F(s,T^k\cdot)$ for some perturbed observable $F: \mathbb R^d\times M \rightarrow  \mathbb R^d$ on the $\Z$-extension $(M,T,\mu)$ :

\begin{thm}[Convergence of perturbed Birkhoff sums for the map]\label{thmbirkhoff}
Fix $\epsilon_0>0$ and a dynamical system $(M,T,\mu)$ described as a $\Z$-extension over a system $(\Mp,\tp,\mup)$ satisfying Hypothesis \ref{H2} above with bounded step function $\phi :\Mp \rightarrow \Z$. Let $F : \mathbb R^d\times \Mp \times \Z \rightarrow \mathbb R^d$ satisfying
\begin{itemize}
\item %pour tout $a \in \Z$, $F(x,.,a)$ est  Lipschitz sur $(\Mp,d)$ et on note $\|F(x,.,a)\|_{lip}$ la constante de Lipschitz associée. $F(x,.,\cdot)$ est uniformément intégrable sur la $\Z$ extension, i.e
 $\sup_{x \in \mathbb R^d}\sum_{a \in \Z}\|F(x,.,a)\|_{lip}<\infty$ %est uniformément bornée sur tout compact de $\mathbb R^d$. 
\item  for any $x\in\mathbb R^d$, $F(x,\cdot)$ has zero $\mu$-integral over $M$.
\item $\sup_{x \in \mathbb R^d}\sum_{a \in \Z}|1+a|^{2(1+\e)}\|F(x,.,a)\|_{\infty}<\infty$
\item these functions $F(\cdot,\omega)$ are uniformly lipschitz (uniformly in $\omega \in M$).% $x \mapsto  F(x,.,\cdot) \in L^{\infty}(\Mp)$ est Lipschitz en sa première variable sur tout compact de $\mathbb R^d$%$x\in \mathbb R^d$.
\end{itemize}
For $x \in \mathbb R^d$, let $s \mapsto  w_s(x)$ be a Lipschitz function.
Set for any $t\in [0,T]$ and $\omega \in M$, 
\begin{align}\label{eqperturbirkhof}
v_t^\epsilon(x,\omega):=\epsilon^{1/4}\int_0^{t/\epsilon} F(w_{\epsilon s}(x),T^{\lfloor s \rfloor}\omega)ds.    
\end{align}

Then for $T \geq 0$ and $x \in \mathbb R^d$,
\begin{align*}
\left(v_t^{\epsilon}(x,\cdot)\right)_{t \in [0,T]} \underset{\epsilon \rightarrow 0}{\overset{\mathcal{L}_{\mu}, \|.\|_\infty}{\rightarrow}} \left(\int_0^t \sqrt{a(w_s(x))}dB_{L_s'(0)}\right)_{t \in [0,T]}\, ,
\end{align*}
%v_t^{\epsilon}(x,\cdot)$ converge fortement en loi vers $\mathcal $\\
where $\mathcal{L}_{\mu}$ means
\footnote{recall that this means the convergence in distribution with respect to every probability measure absolutely continuous with respect to $\mu$.} 
the strong convergence in distribution  with respect to $\mu$, $(B_t)_{t\geq 0}$ the standard Brownian motion and $(L_s')_{s \geq 0}$ is the local time of a Brownian motion $(B'_t)_{t \geq 0}$ with variance $\Sigma$ (with $\Sigma$ defined as in Hypothesis \ref{H1})  independent of $(B_t)_{t \geq 0}$. $a(\cdot)$ is the positive symmetric operator given 
\footnote{$\sqrt{a(w_s(x))}$ makes sense because as a variance matrix, $a(w_s(x))$ is symmetric non negative definite and its square root is the unique symmetric non negative matrix whose square power identifies with reduction through orthogonal matrix  of the diagonal reduced matrix of $a(w_s(x))$ } by the following Green Kubo formula :  
\begin{equation}\label{asymptvar}
a_{i,j}(x):=\frac 12\sum_{l \in \Z}\int_M\left(F_i(x,\omega)F_j(x,T^{|l|}(\omega))+F_j(x,\omega)F_i(x,T^{|l|}(\omega))\right)d\mu(\omega).
\end{equation}
%\end{align}
\end{thm}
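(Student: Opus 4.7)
The plan is to reduce the integral to a discrete Birkhoff sum, use the $\Z$-extension structure to separate the random-walk factor $\overline S_k\phi$ from the chaotic fluctuation factor, transfer the latter to the Young tower via Hypothesis~\ref{H2}, and finally identify the limit through a conditional CLT whose conditional quadratic variation involves the local time at $0$ of the rescaled walk.

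\textbf{Reduction and lift.} On each interval $[k,k{+}1)$ the factor $T^{\lfloor s\rfloor}\omega$ equals $T^k\omega$; combining the uniform Lipschitz regularity of $F(\cdot,\omega)$ and of $s\mapsto w_s(x)$, one has $F(w_{\epsilon s}(x),T^k\omega)=F(w_{\epsilon k}(x),T^k\omega)+O(\epsilon)$, so $v_t^\epsilon$ differs from
$$S_N^\epsilon(x,\omega):=\epsilon^{1/4}\sum_{k=0}^{N-1}F(w_{\epsilon k}(x),T^k\omega),\qquad N=\lfloor t/\epsilon\rfloor,$$
by $O(t\,\epsilon^{1/4})$ uniformly in $\omega$. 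Writing $\omega=(\omega',a)\in\Mp\times\Z$ and unfolding $T^k$,
$$S_N^\epsilon(x,\omega',a)=\epsilon^{1/4}\sum_{k=0}^{N-1}F\bigl(w_{\epsilon k}(x),\tp^k\omega',\,a+\overline S_k\phi(\omega')\bigr).$$
The polynomial tail hypothesis $\sum_a|1+a|^{2(1+\e)}\|F(\cdot,\cdot,a)\|_\infty<\infty$ allows truncation to $|j|\le R$ with error vanishing as $R\to\infty$ uniformly in $\epsilon$. For each such $j$, Hypothesis~\ref{H2} approximates the Lipschitz observable $\omega'\mapsto F(x,\omega',j)$ by a tower observable $F_n(x,\cdot,j)\circ\pi_2$, constant on atoms of $\xi_{-n}^n$, with uniform error $C\beta_0^n\|F(x,\cdot,j)\|_{lip}$. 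Choosing $n=n(\epsilon)\asymp c|\log\epsilon|$ renders the cumulative error over the $N$ steps negligible after the $\epsilon^{1/4}$ factor, so every quantitative estimate on $S_N^\epsilon$ reduces to one on observables factoring through $\pi_2:M_1\to\pM$, amenable to the spectral-gap analysis of the twisted transfer operator $P_u$ on the Banach space $\B$.

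\textbf{Joint limit.} Fix a probability $\nu=\psi\,d\mup\otimes\delta_{a_0}$ with $\psi\ge 0$ Lipschitz bounded (such $\nu$'s are dense among absolutely continuous probabilities on $M$) and $g\in C_b(\mathcal{C}([0,T]))$. The central object is the characteristic functional
$$\mathbb E_\nu\!\left[e^{i\langle\theta,S_N^\epsilon(x,\cdot)\rangle}\,g\bigl(\epsilon^{1/2}\overline S_{\lfloor\cdot/\epsilon\rfloor}\phi\bigr)\right].$$
The spectral decomposition $P_u^n=\lambda_u^n\Pi_u+N_u^n$ with $\lambda_u=1-\Sigma u^2/2+O(u^3)$ from Hypothesis~\ref{H1}(4), combined with the Nagaev--Guivarc'h method, yields both the functional invariance principle $\epsilon^{1/2}\overline S_{\lfloor s/\epsilon\rfloor}\phi\Rightarrow\sqrt\Sigma\,B'_s$ and, via the associated local limit theorem, the joint convergence of normalised occupation times $\epsilon^{1/2}\#\{k\le t/\epsilon:\overline S_k\phi=j\}\to L'_t(0)$. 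Conditionally on the walk, $S_N^\epsilon(x,\cdot)$ behaves like a martingale whose conditional quadratic variation converges to $\int_0^t a(w_s(x))\,dL'_s(0)$, the per-fibre contributions summing up to reconstitute the Green--Kubo matrix~\eqref{asymptvar}. A conditional Lindeberg CLT then delivers, given the walk, a centred Gaussian limit; the asymptotic orthogonality of the fluctuations to $e^{iu\overline S_k\phi}$ (forced by the quadratic vanishing of $\lambda_u-1$ at $0$) yields the independence of the limiting Brownian motions $B$ and $B'$, producing the limit $\int_0^t\sqrt{a(w_s(x))}\,dB_{L'_s(0)}$.

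\textbf{Tightness and main difficulty.} A moment bound of the form $\mathbb E_\nu|S_N^\epsilon(x,\cdot)-S_M^\epsilon(x,\cdot)|^{2p}\le C((N-M)\epsilon)^{p/2}$, obtained by Fourier-expanding $\mathbf 1_{\{\overline S_k\phi=j\}}$ and summing the spectral bounds from Hypothesis~\ref{H1}(4), combined with Kolmogorov's tightness criterion, gives the functional convergence in $(\mathcal{C}([0,T]),\|\cdot\|_\infty)$; the strong convergence in law with respect to $\mu$ then follows from the density of our reference probabilities $\nu$. The hardest part is the joint limit: establishing the exact Green--Kubo expression~\eqref{asymptvar} simultaneously with the independence $B\perp B'$ requires a bivariate Fourier analysis of the operator $P\bigl(e^{iu\hat\phi+i\langle\theta,F\rangle}\cdot\bigr)$, in the spirit of P\`ene's perturbed transfer-operator techniques on the Lorentz gas; both pieces must be extracted coherently from a single spectral decomposition, and the non-stationary slow parameter $w_{\epsilon k}(x)$ prevents a direct appeal to the stationary theory and forces a careful control of the dependence of the spectral data on the slow variable.
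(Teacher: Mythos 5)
You take a genuinely different route from the paper: you propose a characteristic-functional / conditional-CLT argument built on a bivariate spectral perturbation $P\bigl(e^{iu\hat\phi+i\langle\theta,F\rangle}\cdot\bigr)$, whereas the paper proves the one-dimensional marginals (Proposition~\ref{propbirkhoff1}) by the method of moments plus Carleman's criterion, then extends to finite-dimensional distributions (Section~\ref{sectfinidimbirk}) and tightness via a $2p$-moment bound (Lemma~\ref{encadprod}) and Billingsley. The structural ingredients you identify — lift to the tower via Hypothesis~\ref{H2}, spectral gap, local time of the rescaled walk, Green--Kubo variance, independence of $B$ and $B'$ — are all correct and appear in the paper, but the mechanism by which they are assembled is different.

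The gap is precisely where you locate the ``main difficulty'', and it is real. The bivariate perturbation you propose does not apply cleanly here for two reasons that the moment method is specifically designed to circumvent. First, $F(x,\cdot,\cdot)$ lives on $\hat M\times\Z$, not on $\hat M$; twisting by $e^{i\langle\theta,F\rangle}$ with the $\Z$-coordinate replaced by $\overline S_k\hat\phi$ produces an operator that depends on the whole history of the walk, so it is not a fixed perturbed transfer operator $P_{u,\theta}$ iterated $n$ times, and the Nagaev--Guivarc'h scheme does not directly apply. One would have to expand the $\Z$-sum, leading to a trivariate Fourier analysis whose principal part and error terms you do not estimate. Second, the slow parameter $w_{\epsilon k}(x)$ makes the operator genuinely non-autonomous; controlling a product of slowly-varying twisted operators $P\bigl(e^{iu\hat\phi+i\theta\epsilon^{1/4}F(w_{\epsilon k}(x),\cdot)}\cdot\bigr)$ is itself a substantial piece of analysis that the proposal acknowledges but does not carry out. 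The paper avoids both issues: in the moment expansion the $\Z$-sum collapses on each factor into a pairwise decorrelation $Q_{l,a}^{(0)}(\cdot)=\tfrac{1}{(2\pi\Sigma l)^{1/2}}\Pi_0(\cdot)$ (Proposition~\ref{spectre}), which is independent of $a$ and automatically produces the local-time weights and the independence $B\perp B'$; and the slow parameter is handled by Lemma~\ref{convv}, which shows $I_\mu\bigl(F(x_1,T^l\cdot)F(x_2,\cdot)\bigr)=O(l^{-(1+\epsilon_0/4)})$ uniformly in $(x_1,x_2)$, so one may freeze $w_{\epsilon\ls_{i+1}}(x)\approx w_{\epsilon\ls_i}(x)$ within the decay window $l\le\epsilon^{-1/4}$ with error $O(\epsilon^{1/2})$ and then pass to a Riemann integral via uniform continuity of $G(\mathbf u)$. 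Also note that the paper does not invoke any martingale-coboundary decomposition (which your ``conditionally a martingale'' step would require, and which is delicate here because $F$ depends on $k$ and on the $\Z$-level); the moment computation identifies the limiting law directly with that of $\mathcal N\bigl(\int_0^t a(w_s(x))\,dL'_s(0)\bigr)^{1/2}$ and then uses Carleman.

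So: right picture, but the core analytic engine you propose (joint bivariate spectral analysis with a slowly-varying perturbation) is left as a black box, and that is exactly the part the paper replaces by a self-contained combinatorial moment computation.
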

Observe that when $(M,T,\mu)$  is invertible (as for the periodic Lorentz gas)
then~\eqref{asymptvar} becomes simply
\begin{equation}\label{asymptvar2}
a_{i,j}(x):=\sum_{l \in \Z}\int_M F_i(x,\omega)F_j(x,T^{l}(\omega))d\mu(\omega).
\end{equation}

\begin{rem}
   \begin{itemize}
       \item The process $(B_{L_t'(0)})_{t\geq 0}$ is a martingale for its natural filtration, this is a consequence of the independence between $(B_t)_{t\geq 0}$ and $(L_s')_{s \geq 0}$. Indeed for any $t\geq 0$, $B_{L_t'(0)}$ is integrable with 
       \begin{align*}
           E(|B_{L'_t(0)}|)=E(E(|B_{L'_t(0)}|\, |(L'_s(0))_{s\geq 0}))=E\left((L'_t(0))^{1/2}\right)E(|B_1|),
       \end{align*}
       and $E\left(B_{L_t'(0)}-B_{L_s'(0)}\, | (B_{L'_s(0)})_{s\leq u}\right)=0$:
       \begin{align*}
           E\left(B_{L_t'(0)}-B_{L_s'(0)}\, | (B_{L'_s(0)})_{s\leq u}\right)&=E\left(E\left(B_{L_t'(0)}-B_{L_s'(0)}\, |(L'_s(0))_{s\geq 0} ,(B_{L'_s(0)})_{s\leq u}\right)\, | (B_{L'_s(0)})_{s\leq u}\right)\\
       \end{align*}
       The latter term $E\left(B_{L_t'(0)}-B_{L_s'(0)}\, |(L'_s(0))_{s\geq 0} ,(B_{L'_s(0)})_{s\leq u}\right)$ is $0$ by independence of $(L_s)_{s\geq 0}$ and $(B_s)_{s\geq 0}$ : to see this it is enough to prove that for any measurable map $f:C^0(\mathbb R,\mathbb R)\times C^0(\mathbb R,\mathbb R^d)$, $E\left(f\left((L'_s(0))_{s\geq 0},(B_u)_{u\leq s}\right)(B_{L'_t(0)-L'_s(0)})_{s\geq 0}\right)=0$.
       This is a consequence of the following :
       \begin{align*}
           &E\left(f\left((L'_s(0))_{s\geq 0},(B_s)_{s\geq 0}\right)(B_{L'_t(0)}-B_{L'_s(0)})_{s\geq 0}\right)\\
           &=E\left(f\left((L'_s(0))_{s\geq 0},(B_u)_{u\leq s}\right)(B_{L'_t(0)}-B_{L'_s(0)})_{s\geq 0}\, | (L'_s(0))_{s\geq 0}\right)\\
           &=E\left(E\left(f\left((L'_s(0))_{s\geq 0},(B_u)_{u\leq s}\right)(B_{L'_t(0)}-B_{L'_s(0)})_{s\geq 0}\, | (L'_s(0))_{s\geq 0}\right)\right)\\
           &=\int E\left( f\left((h_u)_{u\geq 0},(B_{h_u})_{u\leq s}\right)(B_{h_t}-B_{h_s})\right)d\PP_{(L'_s(0))_{s\geq 0}}((h_s)_{s\geq 0})=0,
       \end{align*}
       by independence of $(B_{h_u})_{u\leq s}$ with respect to the random variable $B_{h_t}-B_{h_s}$.
    %)E(\cdot)=\int.d\PP_{B_{h_t}-B_{h_s})\otimes d\PP_{(B_{u)_{u\leq h(s)} 
       
       \item When $d=1$, the law of the process $\left(\int_0^t \sqrt{a(w_s(x))}dB_{L_s'(0)}\right)_{t\geq 0}$ can be identified with the law of 
       the process $\left( B_{\int_0^ta(w_s(x))dL_s'(0)}\right)_{t\geq 0}$. To check this, it is enough to verify that both processes have same 
       law conditionally to $(L_s'(0))_{s\geq 0}$. The law of each process conditioned to $(L_s'(0))_{s\geq 0}=(h_s)_{s\geq 0}$ for some 
       continuous map $(h_s)_{s\geq 0}$ satisfies Dubins-Schwarz theorem (see \cite{DuSC65}) with the same quadratic variation ($\langle 
       \int_0^t \sqrt{a(w_s(x))}dB_{h_s} \rangle=\int a(w_s(x))dh_s$) thus they have same law.  
   \end{itemize} 
\end{rem}

\subsection{Slow-fast systems (perturbation by the map)}
We consider the following slow fast system for the transformation : Let  $F: \mathbb R^d\times M\rightarrow \mathbb R^d$ and $\overline F : \mathbb R^d \rightarrow \mathbb R^d$,
 both Lipschitz for their coordinates on $\mathbb R^d$. We say that $(x_t^\epsilon)_{t\in \mathbb{R}_+}$ 
solves the perturbed differential equation if for any $(x,\omega)\in \mathbb R^d\times M$, the map $(x_t^\epsilon(x,\omega))_{t\in \mathbb{R}_+}$ is a solution of the following equation :

\begin{align}\label{eqperturbdifftf}
\left\{
\begin{array}{r l}
&\frac{dx^\epsilon_t}{dt}(x,\omega)=F(x_t^{\epsilon}(x,\omega),T^{\lfloor t/\epsilon\rfloor}\omega)+\Fb(x_t^\epsilon(x,\omega))\\
&x_0^\epsilon(x,\omega)=x 
\end{array}\right.
\end{align}

The next theorem states a limit theorem over the convergence of the solutions $(x_t^\epsilon)_{t\in \mathbb{R}_+}$ of a perturbed differential equation \eqref{eqperturbdifftf} to the averaged map $(w_t)_{t\in \mathbb{R}_+}$ defined as follows : for any $x\in \mathbb R^d$, $(w_t(x))_{t\geq 0}$ is a solution of

\begin{align}\label{eqomega}
\left\{
\begin{array}{r l}
&\frac{dw_t}{dt}(x)=\Fb(w_t(x))\\
&w_0(x)=x.\\
\end{array}\right.
\end{align}

More precisely it provides a limit theorem for the error term $(e^\epsilon_t)_{t\geq 0}$ defined for any $(x,\omega)\in \mathbb R^d\times M$ by
$$
e^\epsilon_t(x,\omega):=x^{\epsilon}_t(x,\omega)-w_t(x).
$$ 

\begin{thm}\label{thmequadif}
Let $\epsilon >0$, and $(M,T,\mu)$ a $\Z$-extension over a probability preserving dynamical system $(\Mp,\tp,\mup)$ satisfying Hypothesis \ref{H2}, with bounded step function $\phi :\Mp \rightarrow \Z$. Let 
$F:\mathbb R^d\times \Mp \times \Z \rightarrow \mathbb R^d$ satisfying the hypotheses of theorem \ref{thmbirkhoff}, such that for any $(\omega,a)\in M$, $F(\cdot,\omega,a)$ is $C^2$ and its derivative $D_1F$ is itself uniformly bounded with uniformly bounded derivative and satisfies : %pour tout compact $K \subset \mathbb R^d$ :
$$
\sup_{x \in \mathbb R^d}\sum_{a \in \Z}(1+|a|)^{2(1+\epsilon_0)}\|D_1F(x,.,a)\|_{\infty} <\infty \textit{ and }\sup_{x \in \mathbb R^d}\sum_{a \in \Z}\|D_1F(x,.,a)\|_{lip}<\infty.
$$
Let $\Fb : \mathbb R^d\rightarrow \mathbb R^d$ be a $C^2$ bounded map with bounded derivative. 
%On suppose que l'application $F$ définie pour tout $(x,(\omega,a)) \in \mathbb{R}\times M$, par
%$$
%F(x,\omega,a):=F(x,\omega,a)+\Fb(x) 
%$$
%est $C^2$ en la première variable, uniformément bornée, de dérivées première et secondes uniformément bornées sur $\mathbb{R}$ uniformément. \\
Then for any $x \in \mathbb R^d$, the error term $(e_t^\epsilon(x,\cdot))_{t\in [0,T']}$ satisfies a limit theorem for the strong convergence in law  :
\begin{align*}%\label{eqmoyenfinal}
(\epsilon^{-3/4}e_t^\epsilon(x,\cdot))_{t \in [0,T']}\underset{\epsilon \rightarrow 0}{\overset{\mathcal{L}_\mu,\|.\|_{\infty}}{\rightarrow}}(y_t(x))_{t \in [0,T']},
\end{align*}
where $\mathcal{L}_\mu$ denotes the strong convergence in law for the measure $\mu$ and
where $(y_t(x))_{t\ge 0}$ is the random process given by
\begin{align}\label{eqthmpert}
&(y_t(x))_{t \in [0,T']}:=\left( \int_0^t \sqrt{a(w_s(x))}dB_{L_s'(0)}%(\omega)
\right.\\
&\left. +\int_0^t D\overline F(w_s(x))B_{\int_0^s a(w_u(x))dL_u'(0)}%(\omega)
\exp\left(\int_s^tD\overline F(w_u(x))du\right)ds\right)_{t \in [0,T']},
\end{align}
where $(B_t)_{t\geq 0}$ is a standard Brownian motion (with variance $1$) and $(L_s')_{s \geq 0}$ is the local time associated to a Brownian motion $(B_t')_{t \geq 0}$ with variance $\Sigma$ (where $\Sigma=\sum_{k\in\mathbb Z}E_{\hat{\mup}}( \phi \circ \tp^{|k|} \phi)$ is determined in Hypothesis \ref{H1}), independent of $(B_t)_{t \geq 0}$, and where $a : x \in \mathbb R^d \mapsto a(x)$ is the map defined as
\begin{equation}\label{formulea}
a(x):= \left(\frac 12\sum_{l \in \Z}I_{\mu}\left(F_i(x,T^{|l|}\cdot)F_j(x,\cdot)+F_j(x,T^{|l|}\cdot)F_i(x,\cdot)\right)\right)_{i,j=1,...,d}
%:=\sum_{l \in \Z}\int_{M}f(x,T^{|l|}\cdot)f(x,\cdot)d\mu,
\end{equation}
%where $f(x,T^{|l|}\cdot)f(x,\cdot)$ represents the product matrix with coordinates $(f_i(x,T^{|l|}\cdot)f_j(x,\cdot))_{1\leq i,j \leq d}$, and 
where 
$I_\mu$ denotes the integral with respect to $\mu$, i.e. 
$$
\forall g\in L^1_\mu,\quad I_{\mu}(g):=\int_{M}gd\mu.
$$
\end{thm}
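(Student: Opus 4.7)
The plan is to linearize the error equation, apply Duhamel's formula, and extract the leading-order contribution using Theorem \ref{thmbirkhoff}. Let $\Phi(t,s)$ denote the fundamental matrix solution of the linearized ODE $dy/dt=D\overline F(w_t(x))y$ with $\Phi(s,s)=\mathrm{Id}$; in dimension one this is exactly $\exp(\int_s^t D\overline F(w_u(x))\,du)$, and it satisfies $\partial_s\Phi(t,s)=-\Phi(t,s)D\overline F(w_s(x))$.

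First, subtracting \eqref{eqomega} from \eqref{eqperturbdifftf} and Taylor-expanding both $\overline F$ (which is $C^2$) and $F(\cdot,\omega)$ (uniformly $C^2$ in its first variable) around $w_t(x)$, the error satisfies
$$\frac{de_t^\epsilon}{dt}=D\overline F(w_t(x))e_t^\epsilon+F(w_t(x),T^{\lfloor t/\epsilon\rfloor}\omega)+D_1F(w_t(x),T^{\lfloor t/\epsilon\rfloor}\omega)e_t^\epsilon+O(|e_t^\epsilon|^2).$$
Before applying Duhamel's formula, I would establish the a priori bound $\sup_{t\in[0,T']}|e_t^\epsilon|=O(\epsilon^{3/4})$: the change of variable $u=\epsilon s$ shows $\int_0^t F(w_s(x),T^{\lfloor s/\epsilon\rfloor}\omega)\,ds=\epsilon^{3/4}v_t^\epsilon(x,\omega)$, and the family $v^\epsilon$ is tight by Theorem \ref{thmbirkhoff}; together with Gronwall's lemma (using the uniform boundedness of $D\overline F$ and $D_1F$) this yields the claimed bound.

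With this a priori control, Duhamel's formula gives
$$e_t^\epsilon=\epsilon^{3/4}\!\int_0^t\!\Phi(t,s)\,dv_s^\epsilon+\int_0^t\!\Phi(t,s)D_1F(w_s(x),T^{\lfloor s/\epsilon\rfloor}\omega)e_s^\epsilon\,ds+\int_0^t\!\Phi(t,s)\,O(|e_s^\epsilon|^2)\,ds.$$
A Riemann--Stieltjes integration by parts in the main term (using $v_0^\epsilon=0$ and $\partial_s\Phi(t,s)=-\Phi(t,s)D\overline F(w_s(x))$) produces
$$\epsilon^{-3/4}e_t^\epsilon=v_t^\epsilon(x,\cdot)+\int_0^t\Phi(t,s)D\overline F(w_s(x))v_s^\epsilon(x,\cdot)\,ds+R_t^\epsilon,$$
where $R_t^\epsilon$ gathers all remainders. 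The quadratic term contributes $O(\epsilon^{3/2})/\epsilon^{3/4}=O(\epsilon^{3/4})\to 0$. To conclude it remains to observe that the map $v\mapsto (v_t+\int_0^t\Phi(t,s)D\overline F(w_s(x))v_s\,ds)_{t\in[0,T']}$ is continuous on $C^0([0,T'],\mathbb R^d)$ with the sup-norm, so the strong convergence in law of $v^\epsilon$ from Theorem \ref{thmbirkhoff} is transported by the continuous mapping theorem to yield the announced limit process $(y_t(x))_{t\in[0,T']}$.

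The main obstacle is controlling the cross term $\int_0^t\Phi(t,s)D_1F(w_s(x),T^{\lfloor s/\epsilon\rfloor}\omega)e_s^\epsilon\,ds$ in $R_t^\epsilon$: naively combining $\|D_1F\|_\infty=O(1)$ with $|e_s^\epsilon|=O(\epsilon^{3/4})$ produces a contribution of the same order $\epsilon^{3/4}$ as the leading term. The remedy is to exploit that differentiating the identity $\int_M F(x,\cdot)\,d\mu=0$ gives $\int_M D_1F(x,\cdot)\,d\mu=0$, so $D_1F$ is itself $\mu$-centered and satisfies the same summability assumptions as $F$; hence $u\mapsto\int_0^u D_1F(w_s(x),T^{\lfloor s/\epsilon\rfloor}\omega)\,ds$ is of order $\epsilon^{3/4}$ by (the proof of) Theorem \ref{thmbirkhoff}. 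A further Riemann--Stieltjes integration by parts, transferring the derivative onto the slowly varying map $s\mapsto\epsilon^{-3/4}\Phi(t,s)e_s^\epsilon$ (whose $s$-derivative is uniformly bounded thanks to the a priori bound and the ODE for $e^\epsilon$), then gains the extra factor of $\epsilon^{3/4}$ needed to push this term into the vanishing remainder.
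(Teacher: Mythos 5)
Your high-level architecture mirrors the paper's proof closely: a Gr\"onwall-based a priori bound of order $\epsilon^{3/4}$, Duhamel's formula with the fundamental matrix $\Phi(t,s)=\exp\bigl(\int_s^tD\overline F(w_u(x))\,du\bigr)$ to isolate the leading linear functional of $v^\epsilon$, the continuous mapping theorem, and then a remainder estimate. You also correctly isolate the delicate term $\int_0^t\Phi(t,s)D_1F(w_s(x),T^{\lfloor s/\epsilon\rfloor}\omega)\,e_s^\epsilon\,ds$ and correctly observe that $D_1F$ inherits zero $\mu$-integral from $F$. However, there is a genuine gap in your treatment of that term. You claim that a further integration by parts transfers the derivative onto $s\mapsto\epsilon^{-3/4}\Phi(t,s)e_s^\epsilon$, ``whose $s$-derivative is uniformly bounded.'' That is false: the ODE for $e^\epsilon$ gives
$\frac{d}{ds}\bigl(\epsilon^{-3/4}e_s^\epsilon\bigr)=\epsilon^{-3/4}F(w_s(x),T^{\lfloor s/\epsilon\rfloor}\omega)+D_1F(\dots)\epsilon^{-3/4}e_s^\epsilon+\dots$,
and the first term is of size $\epsilon^{-3/4}$, not $O(1)$. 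So the second integration by parts does \emph{not} trivially gain a factor $\epsilon^{3/4}$; what it produces is precisely the triangular double integral
$\epsilon^{-3/4}\int_{0\le u\le s\le t}\Phi(t,s)\,D_1F(w_s(x),T^{\lfloor s/\epsilon\rfloor}\omega)\,F(w_u(x),T^{\lfloor u/\epsilon\rfloor}\omega)\,du\,ds$,
which is of the same naive order $\epsilon^{3/4}$ as the leading term. The extra decay comes only from the \emph{decorrelation} of the two centered observables $D_1F$ and $F$ along the orbit, and this is exactly what the paper establishes via Lemma~\ref{encadprod}: a moment bound on iterated products of centered observables that yields $\Vert b_t^\epsilon\Vert_{L^{2p}_{\mup}}=O(\epsilon^{3/4})$. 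Your proposal never invokes this (or any analogous) decorrelation estimate, so the claimed smallness of the cross term is unjustified.

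Two secondary points. First, tightness of $v^\epsilon$ gives only stochastic boundedness, not the deterministic pathwise bound $\sup_t|e_t^\epsilon|=O(\epsilon^{3/4})$ you write; the paper instead proves the bound in $L^p_{\mup}$ (Lemma~\ref{errlp}), which is what feeds into Lemma~\ref{encadprod} and into the Taylor remainder estimate. Second, the remainder estimate lives in $L^p_{\mup}$, so the convergence you obtain directly is $\mathcal L_{\mup}$; upgrading to strong convergence in law with respect to $\mu$ (i.e.\ all probability measures absolutely continuous w.r.t.\ $\mu$) requires the additional Zweim\"uller-type argument used at the end of the paper's proof, which your proposal omits.
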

The map $t\mapsto \int_0^t a(w_s(x))dL_s'(0)$ derived from this theorem corresponds to a perturbed variance closed to the one obtained in averaging method for probability invariant dynamical systems. Indeed in the case where $(M,T,\mu)$ is the probability preserving Sinai billiard this variance is given by $\int_0^t a(w_s(x))ds$  (see\cite{phdpene}, \cite{kifer}).
The local time $(L_s'(0))_{s\geq 0}$ is also expected since it is linked to the recurrence of $\overline S_n$ and already appeared naturally in the central limit theorem in 
\cite{PT20} which was proved for maps on $M \simeq \Mp\times \Z$ depending only on the second coordinate, $f(w,l):=\beta(l)$.

\section{Perturbed ergodic sums : proof of the limit theorem.}

In order to ease its reading, we prove the statement of theorem \ref{thmbirkhoff} in dimension $d=1$. The proof in higher dimension follows the same path and we will only mention the alternative quantities on which the techniques apply when necessary. We first prove the following intermediate lemma (stated in dimension $d\geq 1$).

\subsection{Convergence in law.}\label{secbirk}

\begin{prop}\label{propbirkhoff1}
Let $S>0$, on a $\Z$-extension $(M,T,\mu)$ and a map $F : \mathbb R^d \times M \rightarrow \mathbb R^d$ both satisfying the same hypotheses as their counterpart in theorem \ref{thmbirkhoff}, we define the variable $\tilde v_t^\epsilon$ for $t\in [0,S]$, $x\in \mathbb R^d$ and $\omega \in M$ as
%over a system $(\Mp,\tp,\mup)$ satisfying hypothesis \ref{H2} with bounded step function $\phi :\Mp \rightarrow \Z$ bounded and non cohomologous to $0$. 

$$
\tilde v_t^\epsilon(x,\omega):=\epsilon^{1/4}\sum_{k=1}^{\lfloor t/\epsilon \rfloor} F(w_{\epsilon k}(x),T^{k}\omega).
$$

When $t$ is fixed, this family of random variables satisfies the following convergence in law with respect to the probability measure $\mup$ on $\Mp\simeq M\times\{0\}$ ,
\begin{align*}
\tilde v_t^{\epsilon}(x,\cdot) \underset{\epsilon \rightarrow 0}{\overset{\mathcal{L}_{\mup}}{\rightarrow}} \mathcal N \left(\int_0^t a(w_s(x))dL_s'(0)\right)^{1/2},
\end{align*}
%v_t^{\epsilon}(x,\cdot)$ converge fortement en loi vers $\mathcal $\\
with $a(x)$ given by~\eqref{formulea} and where $\mathcal{N}$ is a standard (variance $1$) centered normal law and $\left((L_s'(u))_{s \geq 0}\right)_{u \in \mathbb{R}}$ is a continuous version of the local time of a Brownian motion $(B'_t)_{t \geq 0}$ of variance $\Sigma$ (with $\Sigma$ as defined in Hypotheses \ref{H1})  independent of $\mathcal{N}$. 
%Finally\\ $a(x):=\sum_{l \in \Z}I_{\mu}\left(f(x,\cdot)f(x,T^{|l|}(\cdot))\right)$.
\end{prop}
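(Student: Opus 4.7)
The plan is to reduce the limit theorem to a Kesten--Spitzer type central limit theorem for Birkhoff sums of a \emph{fixed} centered observable on the $\mathbb Z$-extension, combined with a localisation of the slow variable $w_s(x)$. I fix a large integer $K$, partition $[0,t]$ into subintervals $I_j:=[jt/K,(j+1)t/K]$, and introduce the frozen block
$$
\tilde v_j^{\epsilon,K}(x,\omega):=\epsilon^{1/4}\sum_{k\,:\,\epsilon k\in I_j} F(w_{jt/K}(x),T^{k}\omega).
$$
Using the uniform Lipschitz continuity of $y\mapsto F(y,\omega)$ (uniform in $\omega$) and of $s\mapsto w_s(x)$, the observable $F(w_{\epsilon k}(x),\cdot)-F(w_{jt/K}(x),\cdot)$ is $\mu$-centered and of sup-norm $O(t/K)$ for $\epsilon k\in I_j$. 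The $O(\sqrt{N})$--variance estimate for Birkhoff sums on a $\mathbb Z$-extension (a consequence of \eqref{eqspectrgap} via the Nagaev--Guivarc'h method) then gives
$$
\Bigl\|\tilde v_t^{\epsilon}(x,\cdot)-\sum_{j=0}^{K-1}\tilde v_j^{\epsilon,K}(x,\cdot)\Bigr\|_{L^2_{\mup}}=O(K^{-3/4})
$$
uniformly in $\epsilon$, so it is enough to analyse $\sum_j\tilde v_j^{\epsilon,K}$ and then let $K\to\infty$.

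For each fixed $j$ the observable $F_j:=F(w_{jt/K}(x),\cdot)$ is $\mu$-centered and satisfies all the hypotheses of the $\mathbb Z$-extension CLT of \cite{PT20} (or obtained directly from Hypothesis \ref{H1}). The joint characteristic function
$$
E_{\mup}\!\Bigl[\exp\Bigl(i\sum_{j=0}^{K-1}u_j\,\tilde v_j^{\epsilon,K}\Bigr)\Bigr]
$$
factors along consecutive blocks as a product of twisted transfer operators $P(e^{iu_j\epsilon^{1/4}F_j}\,\cdot)$; inserting the spectral decomposition~\eqref{eqspectrgap} for the $\mathbb Z$-twist $P_u$ and performing a Kesten--Spitzer Fourier inversion in the $\mathbb Z$-variable (using the decay hypothesis $\sum_a |1+a|^{2(1+\epsilon_0)}\|F(x,\cdot,a)\|_\infty<\infty$) produces the joint convergence
$$
\tilde v_j^{\epsilon,K}(x,\cdot)\xrightarrow[\epsilon\to 0]{\mathcal L_{\mup}}\mathcal N_j\sqrt{a(w_{jt/K}(x))\,(L'_{(j+1)t/K}(0)-L'_{jt/K}(0))},
$$
with the $\mathcal N_j$ i.i.d.\ standard Gaussians, conditionally independent across $j$ given the Brownian motion $B'$ whose local time at $0$ is $(L'_s(0))_s$. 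Letting then $K\to\infty$, the total conditional variance
$\sum_j a(w_{jt/K}(x))\,(L'_{(j+1)t/K}(0)-L'_{jt/K}(0))$ converges to $\int_0^t a(w_s(x))\,dL'_s(0)$ by the Riemann--Stieltjes definition against the continuous nondecreasing function $s\mapsto L'_s(0)$ and the continuous integrand $s\mapsto a(w_s(x))$, producing the stated limit $\mathcal N\bigl(\int_0^t a(w_s(x))\,dL'_s(0)\bigr)^{1/2}$.

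I expect the main obstacle to lie in the joint block characteristic function, where one faces a \emph{double} perturbation of the transfer operator $P$: by $e^{iu\phi}$ (which captures the local-time weighting through the dominant eigenvalue $\lambda_u=1-\Sigma u^2/2+O(u^3)$) and simultaneously by $e^{iu_j\epsilon^{1/4}F_j}$ on each block (which has to produce the Gaussian factor $\exp(-\tfrac12 u_j^2 a(w_{jt/K}(x))\,\Delta_j L'(0))$ via a second-order Taylor expansion of a perturbed leading eigenvalue). One must extract both contributions uniformly over the $K$ blocks while propagating the spectral gap across block boundaries; the tools to do so are the $C^4$-regularity of $u\mapsto\lambda_u$ from \eqref{eqspectrgap}, the uniform exponential bound on $P_u$ for $u$ bounded away from $0$, and the polynomial $\mathbb Z$-decay of $F$ that makes the Fourier inversion in the fibre variable absolutely convergent.
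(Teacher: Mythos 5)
Your approach takes a genuinely different route from the paper's: you propose a characteristic-function argument with block-freezing of the slow variable, whereas the paper proves convergence of moments of all orders and closes via Carleman's criterion. Both are legitimate strategies in principle, and the block-freezing step (reducing to a \emph{fixed} observable $F_j=F(w_{jt/K}(x),\cdot)$ on each interval $I_j$, then letting $K\to\infty$ through a Riemann--Stieltjes sum against $dL'_\cdot(0)$) is a clean idea; the paper instead builds the Riemann sum directly inside the moment combinatorics (formula~\eqref{integ}).

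However, there is a genuine gap at the very point you flag. You write that the joint characteristic function ``factors along consecutive blocks as a product of twisted transfer operators $P(e^{iu_j\epsilon^{1/4}F_j}\cdot)$'', and then appeal to the spectral decomposition~\eqref{eqspectrgap}. This is not available as stated: the observable $F_j$ lives on $M\simeq\Mp\times\Z$, so $F_j(T^k\omega)=F_j(\tp^k\omega,\,\overline S_k(\omega))$, meaning the weight at step $k$ depends on the $\Z$-fibre coordinate $\overline S_k$. The resulting operator is therefore \emph{not} a fixed two-parameter Nagaev--Guivarc'h perturbation $P_{u,v}=P(e^{iu\phi}e^{ivF_j}\cdot)$ with leading eigenvalue you can Taylor-expand; the twist by $F_j$ varies along the orbit through the $\Z$-coordinate, and you must first Fourier-invert in the $\Z$-fibre (i.e.\ sum over the values $a$ of $\overline S_k$, using the polynomial decay of $\|F(x,\cdot,a)\|_\infty$) before any spectral gap/leading-eigenvalue analysis can bite. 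This is precisely the part your proposal leaves at the level of intention (``one must extract both contributions uniformly over the $K$ blocks while propagating the spectral gap'') rather than carrying it out. The paper circumvents this by taking moments: the $N$-th moment expands into a sum over index tuples and $\Z$-values, and the local limit theorem (Proposition~\ref{spectre}, $P^n(1_{\{S_n=\ell\}}\cdot)=\frac{1}{(2\pi\Sigma n)^{1/2}}\Pi_0(\cdot)+G_{n,\ell}$) is inserted at each ``return'' index, reducing everything to absolutely convergent sums in $\ell$ with explicit error $O((1+|\ell|)^{1+\epsilon_0}n^{-1-\epsilon_0/2})$; no doubly-perturbed eigenvalue expansion is ever needed. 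To make your characteristic-function route rigorous you would need an analogue of that step: a uniform local expansion of $P^n\bigl(e^{iv\epsilon^{1/4}F_j(\cdot,\cdot)}1_{\{S_n=\ell\}}\cdot\bigr)$ in both $v$ and $\ell$, with control good enough to iterate over $K$ consecutive blocks. That is essentially the hard content of the proposition and is not supplied.

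A secondary remark: your $L^2$ freezing estimate $O(K^{-3/4})$ assumes the block errors contribute like independent increments. The correct scaling $\|S_n\|_{L^2_{\mup}}=O(n^{1/4})$ for centered observables on the $\Z$-extension is right, but the cross-block correlations must still be controlled; with the paper's tools this would again run through Lemma~\ref{encadprod} (or Proposition~\ref{spectre}), so this step would also benefit from the machinery you omit.
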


 In order to prove Proposition~\ref{propbirkhoff1} we introduce some classical results about the regularity of the Transfer operator $P$ of the factor $(\pM,\pt,\pmu)$ (stated in Lemma \ref{anisop}) and mixing properties of the infinite measure preserving dynamical system $(\pM,\pt,\pmu)$  (related to the mixing local limit theorem, see Proposition \ref{spectre})  based on the spectral gap satisfied by the perturbed operator $P_u$ (see Hypothesis \ref{H1}).

\begin{lem}\label{anisop}

%Pour $f \in B$.%propriétés de B être holder, majoré la norme \|.\|_p
Let $\mathcal A_n:=\{g\in \B, \forall x,y \in \hat M, g(x)=g(y) \textit{ whenever }s(x,y)\geq n\}$. 
There is $C>0$ such that, for any bounded map $g\in \A_n$,  %, en posant $g_{k_n}:=E_{\pmu}(g|\xi_0^{2k_n})$ qui est aussi dans $B$, on a
%$$
%\|P^{k_n}(g_{k_n}\cdot)\|_{B} \leq 
%$$ borné par ce qui est en dessous
%et
$$
\|P^{2n}(g\cdot)\|_{L(\B,\B)} \leq C\|g\|_{\infty}.
$$
\end{lem}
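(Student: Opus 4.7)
The strategy is to reduce the claim to the stronger-looking pointwise estimate $\|P^n(g\cdot)\|_{L(\B,\B)}\leq C\|g\|_\infty$ for $g\in\mathcal A_n$ bounded, after which a composition $P^{2n}=P^n\circ P^n$ together with the uniform bound $\|P^n\|_{L(\B,\B)}\leq C$ yields the lemma. This uniform bound is itself a consequence of Hypothesis~\ref{H1}(4) at $u=0$: one has $P^n=E_{\pmu}(\cdot)\,1_{\pM}+N_0^n$ with $\|N_0^n\|_{L(\B,\B)}\leq C\alpha^n$, and $E_{\pmu}(\cdot)\,1_{\pM}$ is bounded on $\B$ by the continuous embedding $\B\hookrightarrow L^1_{\pmu}$ of Hypothesis~\ref{H1}(0).

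For $f\in\B$ and $y\in\pM$, I would expand
\[
P^n(gf)(y)=\sum_A e^{-\sum_{k=0}^{n-1}h(\pt^k w_A(y))}\,g(w_A(y))\,f(w_A(y)),
\]
where $A$ runs over the $n$-cylinders of $\pt$ (atoms refining $(\Delta_{i,k})$ along $n$ backward iterates) intersecting $\pt^{-n}(y)$, and $w_A(y)\in A$ is the unique preimage of $y$ in $A$. The decisive observation is that since $g\in\mathcal A_n$ and any two points of $A$ have separation time at least $n$, the coefficient $g_A:=g(w_A(y))$ is independent of $y$, so the variation of $g$ drops out of any Lipschitz comparison in $y$. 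For $\|P^n(gf)\|_\infty'$, positivity of $P^n$ gives $|P^n(gf)(y)|\leq \|g\|_\infty\|f\|_\infty'\,P^n(e^{\epsilon l(\cdot)})(y)$; the weight $e^{\epsilon l(\cdot)}$ belongs to $\B$ with norm $1$ (its Lipschitz seminorm vanishes because $l$ is constant on each $\Delta_{l,k}$), whence $P^n(e^{\epsilon l(\cdot)})(y)\leq Ce^{\epsilon l(y)}$ and $\|P^n(gf)\|_\infty'\leq C\|g\|_\infty\|f\|_\B$.

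For the Lipschitz seminorm, I would pick $y,y'\in\Delta_{l,k}$, iterate the bijection of Hypothesis~\ref{H1}(3) $n$ times to pair the preimages cylinder by cylinder with $s(w_A(y),w_A(y'))=n+s(y,y')$, and decompose, writing $a_A(y):=e^{-\sum_{k=0}^{n-1}h(\pt^k w_A(y))}$,
\[
a_A(y)f(w_A(y))-a_A(y')f(w_A(y'))=a_A(y)\bigl(f(w_A(y))-f(w_A(y'))\bigr)+\bigl(a_A(y)-a_A(y')\bigr)f(w_A(y')).
\]
The first summand is dominated by $a_A(y)\|f\|_{lip}'e^{\epsilon l(w_A)}\beta^{n+s(y,y')}$ and the second by the standard bounded-distortion estimate $|a_A(y)-a_A(y')|\leq Ca_A(y)\beta^{s(y,y')}$ together with $|f(w_A(y'))|\leq\|f\|_\infty'e^{\epsilon l(w_A)}$. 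Summing in $A$ and using once more $\sum_A a_A(y)e^{\epsilon l(w_A)}=P^n(e^{\epsilon l(\cdot)})(y)\leq Ce^{\epsilon l(y)}$, the constancy $g(w_A(y))=g(w_A(y'))=g_A$ turns the full expression into a bound of the form $C\|g\|_\infty\|f\|_\B\beta^{s(y,y')}e^{\epsilon l(y)}$, which is the required control of $\|P^n(gf)\|_{lip}'$.

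The main obstacle I expect is the bounded-distortion estimate on the Jacobian $a_A$: making it quantitative requires summability of the variations of $h\circ\pt^k$ along the tower together with a smallness assumption on $\epsilon$ relative to $-\log\beta$ so that the $e^{\epsilon l(\cdot)}$-weighted geometric series stay convergent uniformly in $n$. Both ingredients are routine consequences of the regularity imposed in Hypothesis~\ref{H1} but must be written out carefully. Once they are in hand the estimate $\|P^n(g\cdot)\|_{L(\B,\B)}\leq C\|g\|_\infty$ is established, and a final application of $P^n$ gives the stated $P^{2n}$ bound.
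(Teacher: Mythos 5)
Your proof is correct and follows essentially the same route as the paper's: positivity of the transfer operator together with $\|e^{\epsilon l(\cdot)}\|_{\B}\le 1$ handles $\|\cdot\|_\infty'$, and the bijection of Hypothesis~\ref{H1}(3) iterated $n$ times gives paired preimages $w,X(w)$ on which $g$ takes the same value (because $s(w,X(w))\ge n$), so the Lipschitz comparison reduces to the Jacobian distortion term plus the $f$-variation term. The one genuinely useful refinement you make is to observe that constancy of $g$ on $n$-cylinders already gives $\|P^n(g\cdot)\|_{L(\B,\B)}\le C\|g\|_\infty$, after which $P^{2n}=P^n\circ P^n$ and the uniform bound $\sup_n\|P^n\|_{L(\B,\B)}<\infty$ (from the spectral decomposition at $u=0$) yields the stated $P^{2n}$ inequality; the paper instead runs the preimage argument directly on $P^{2n}$. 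Both arguments need the same bounded-distortion estimate $|a_A(y)-a_A(y')|\le C\,a_A(y)\beta^{s(y,y')}$, which you flag but do not spell out; it follows exactly as in the paper by telescoping $\sum_{i=0}^{n-1}|h(\pt^i w)-h(\pt^i X(w))|\le\|h\|_{lip}\sum_{i=0}^{n-1}\beta^{n-i+s(y,y')}\le\frac{\beta}{1-\beta}\|h\|_{lip}\beta^{s(y,y')}$ combined with the elementary inequality $|e^A-e^B|\le e^{\max(A,B)}|A-B|$, so there is no gap.
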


\begin{proof}
Let $f \in B$, we first determine $\|P^{2n}(gf)\|_{\infty}'$ : 
%On se fixe un 
For some $\Delta_{i,k}$  we denote by $l(\cdot)\in \B$  the map defined by $l(w)=i$ whenever there is $j \in \N$ such that $w \in \Delta_{i,j}$. The positivity of $P$ implies that for $x \in \Delta_{i,k}$, %et $u \in \mathbb{R}$,
\begin{align*}
|P^{2n}(gf)(x)|&\leq |P^{2n}(\|g\|_{\infty}|f|)(x)|\\
&\leq \|g\|_{\infty}\|P^{2n}|f|\|_{\infty}'e^{\epsilon l(x)}.
\end{align*}
Since $P^{2n}$ is uniformly bounded for any $n\in \N$
%et ainsi, avec le caractère uniformément borné de $P^{2n}$ pour $n$, %qui se lie dans l'hypothèse de perturbation de $\ref{H1}$, 

There is a constant $C>0$ such that
\begin{align}\label{infini}
\|P^{2n}(gf)\|_{\infty}'\leq  C\|g\|_{\infty}\|f\|_\B.
\end{align}

%On calcule ici $\|P^{k_n}(g_{k_n}\cdot)\|_{lip,\Delta}$ Où $\Delta$ consiste en un fragment de $\xi_{-k_n}^{k_n}$
We now give an estimate of  $\|P^{2n}(gf)\|_{lip}'$. Let $i,k\in \N$,
for any $x,y \in \Delta_{i,k}$, we denote by $X$ a bijection between $T^{-2n}(\{x\})$ and $T^{-2n}(\{y\})$ such that for any $z \in T^{-2n}(\{x\})$, $s(z,X(z))=2n+s(x,y)$. %On obtient alors en regardant  $P^{2n}(gf)$
Once this set, we have for any $f \in \B$ :%et en remarquant que $e^{iuS_{k_n}}(w)=e^{iuS_{k_n}}(X(w))$ :
\begin{align}
&\left|P^{2n}(gf)(x)-P^{2n}(gf)(y) \right| \nonumber\\
&= \left| \sum_{w \in \pt^{-2n}(\{x\})}e^{-\sum_{i=0}^{2n-1}h(\pt^i w)}(gf)(w)-e^{-\sum_{i=0}^{2n-1}h(\pt^i X(w))}(gf)(X(w)) \right|\nonumber\\
\label{equ2bis}
%\left|P^{2n}(gf)(x)-P^{2n}(gf)(y) \right|
&\le\sum_{w \in \pt^{-2n}(\{x\})} \left(\mathcal G_w+\mathcal H_w\right),
\end{align}
with
\begin{align*}
%&\leq  \sum_{w \in \pt^{-2n}(\{x\})}
%e^{-\sum_{i=0}^{2n-1}h(\pt^i w)}
\mathcal G_w:=&\left|e^{\sum_{i=0}^{2n-1}h(\pt^i w)}
-e^{\sum_{i=0}^{2n-1}h(\pt^i X(w))}
%-
%e^{-\sum_{i=0}^{2n-1}h(\pt^i w)}
\right|\|g\|_{\infty}\|f\|_{\infty}'e^{\epsilon l(w)}
\nonumber\\
%&+\sum_{w \in \pt^{-2n}(\{x\})}
&\leq \|g\|_{\infty}\|f\|_{\infty}' 
%\sum_{w \in \pt^{-2n}(\{x\})}
e^{-\sum_{i=0}^{2n-1}h(\pt^i w)}\sum_{i=0}^{2n-1}\|h\|_{lip}\beta^{2n-i+s(x,y)}e^{\epsilon l(w)}\beta^{s(x,y)},
\end{align*}
and
\begin{align*}
\mathcal H_w&:=e^{-\sum_{i=0}^{2n-1}h(\pt^i X(w))}\|g\|_{\infty}|f(w)-f(X(w))|\nonumber\\
&\le e^{-\sum_{i=0}^{2n-1}h(\pt^i X(w))}\|g\|_{\infty}\|f\|_{lip}'e^{\epsilon l(w)}\beta^{s(x,y)+2n}.\nonumber
\end{align*}
%\end{align}
%\begin{align}
%\nonumber\\
%&+\sum_{w \in \pt^{-1}(\{x\})}
Thus, it follows from~\eqref{equ2bis} and from the previous estimates of $\mathcal G_w$ and $\mathcal H_w$ that
\begin{align*}
&\left|P^{2n}(gf)(x)-P^{2n}(gf)(y) \right|\leq \|P^{2n}e^{\epsilon l(\cdot)}\|_{\infty,\Delta_{i,k}}\beta^{s(x,y)}\|g\|_{\infty}\left(\frac{\beta}{1-\beta}\|h\|_{lip} \|f\|_{\infty}'+\|f\|_{lip}'\right).
\end{align*}
Since $\|e^{\epsilon l(\cdot)}\|_\B\leq 1$ and $(P^n)_{n \in \N}$ is a sequence of uniformly bounded operators, There is $C_2>0$ such that
$$
\|P^{2n}(gf)\|_{lip}'\leq C_2\|g\|_{\infty}\|f\|_\B.
$$
This concludes the lemma.

\end{proof}

\begin{prop}\label{spectre}
Let $\e >0$. Let $(\pM,\pt,\pmu)$ be a probabilistic dynamical system with transfer operator $P$ and $\B$ satisfying \ref{H1}.
For $l \in \Z$ and $n\in \N$,

$$
P^n(1_{\{S_n=l\}}\cdot)=\frac{1}{(2\pi \Sigma n)^{1/2}} \Pi_0(\cdot) +G_{n,l}(\cdot)
$$
where $\|G_{n,l}(\cdot)\|_{\B}=O\left( \frac {(1+|l|)^{1+ \e }}{n^{1+\frac \e 2}}\right)$, uniformly in the choice of $l$.
\end{prop}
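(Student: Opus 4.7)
The strategy is the standard Nagaev--Guivarc'h perturbative Fourier method. Using Fourier inversion together with the iterated identity $P^n(e^{iuS_n}f) = P_u^n f$ (which follows from the definition of $P_u$ in Hypothesis~\ref{H1}(4) by induction on $n$), I would start from
\begin{align*}
P^n(\mathbf{1}_{\{S_n = l\}}\,\cdot\,) \;=\; \frac{1}{2\pi}\int_{-\pi}^{\pi} e^{-iul}\, P_u^n\,du,
\end{align*}
and split at $|u|=b$. On $b<|u|\le \pi$, the uniform bound $\|P_u^n\|_{L(\B)}\le C\alpha^n$ of Hypothesis~\ref{H1}(4) produces an exponentially small contribution, absorbed into $G_{n,l}$. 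On $|u|\le b$, I insert $P_u^n = \lambda_u^n \Pi_u + N_u^n$; the $N_u^n$ part contributes at most $O(\alpha^n)$ in $L(\B)$.

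The substantive piece is $\frac{1}{2\pi}\int_{-b}^{b} e^{-iul}\lambda_u^n \Pi_u\,du$. Using $\lambda_u = 1 - \Sigma u^2/2 + O(u^3)$ (whence $|\lambda_u|^n \le e^{-cnu^2}$ after possibly shrinking $b$) together with $\Pi_u = \Pi_0 + O(u)$ in $L(\B)$, I would approximate $\lambda_u^n \Pi_u$ by $e^{-n\Sigma u^2/2}\Pi_0$; after the rescaling $v = u\sqrt{\Sigma n}$ and completing the range of integration to all of $\mathbb{R}$ (at exponentially small cost), the Gaussian integral produces
\begin{align*}
\frac{1}{2\pi\sqrt{\Sigma n}}\,\Pi_0\int_{\mathbb R} e^{-ivl/\sqrt{\Sigma n}} e^{-v^2/2}\,dv \;=\; \frac{1}{\sqrt{2\pi\Sigma n}}\, e^{-l^2/(2\Sigma n)}\,\Pi_0.
\end{align*}

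To match the stated main term I would compare $e^{-l^2/(2\Sigma n)}$ with $1$ via the elementary inequality $|1-e^{-x}|\le x^{(1+\e)/2}$, valid for all $x\ge 0$ provided $(1+\e)/2\le 1$; applied at $x = l^2/(2\Sigma n)$ and multiplied by the prefactor $1/\sqrt{n}$, this yields precisely an error of order $(1+|l|)^{1+\e}/n^{1+\e/2}$. The remaining Edgeworth-type errors coming from $\lambda_u^n - e^{-n\Sigma u^2/2}$ and $\Pi_u - \Pi_0$, bounded by invoking the $C^4$ regularity of $u\mapsto \lambda_u$ and the $C^2$ regularity of $u\mapsto \Pi_u$ (with the parity cancellation $\int v^3 e^{-\Sigma v^2/2}\,dv=0$ used at $l=0$), contribute at worst $O(1/n^{3/2})$ uniformly in $l$, which is again dominated by the announced rate when $\e\le 1$.

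The main difficulty is that every approximation above must be made in the operator norm on $L(\B)$ rather than in supremum norm. This is where the spectral regularity $\lambda_.\in C^4([-b,b],\mathbb C)$ and $\Pi_.\in C^2([-b,b],L(\B))$, together with the uniformity of $\|N_u^n\|_{L(\B)}\le C\alpha^n$, becomes essential; these are exactly the items packaged in Hypothesis~\ref{H1}(4), while the structure of $\B$ via the partition $(\Delta_{i,k})$ and the preceding Lemma~\ref{anisop} ensure that these operator bounds translate into the $\B$-norm control of $G_{n,l}(\cdot)$ required by the statement.
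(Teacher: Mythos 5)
Your proposal follows the same Nagaev--Guivarc'h strategy as the paper: Fourier inversion, the split at $|u|=b$, the decomposition $P_u^n=\lambda_u^n\Pi_u+N_u^n$, and Gaussian approximations. The one substantive organizational difference is where the dependence on $l$ is extracted. You complete the Gaussian integral first, obtaining the kernel $\frac{1}{\sqrt{2\pi\Sigma n}}\,e^{-l^2/(2\Sigma n)}\Pi_0$, and then compare $e^{-l^2/(2\Sigma n)}$ to $1$ via $|1-e^{-x}|\le x^{(1+\e)/2}$; the paper instead Taylor-expands the oscillatory factor $e^{-iul/n^{1/2}}$ before integrating, using the elementary bound $|e^{-ix}-1-ix|\le|x|^{1+\e}$, and lets the parity of the resulting integrands against $e^{-\Sigma u^2/2}$ kill the $O(n^{-1})$ terms. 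Both routes land on the same $O\bigl((1+|l|)^{1+\e}/n^{1+\e/2}\bigr)$ rate and are essentially equivalent. Two minor points: your claim that the Edgeworth remainders are $O(n^{-3/2})$ \emph{uniformly in $l$} is a bit too optimistic, because the phase $e^{-iul/n^{1/2}}$ destroys the pure parity cancellation for $l\ne 0$; what one actually gets for the $u\Pi_0'$ and $u^3$ corrections is of order $(1+|l|)/n^{3/2}$, which is still dominated by the target rate when $\e\le 1$, so the conclusion stands but the intermediate statement should be restated carefully. Also, the final bound $\|G_{n,l}\|_{L(\B,\B)}$ follows directly from the operator-norm control in Hypothesis~\ref{H1}(4) ($\|N_u^n\|\le C\alpha^n$, $\|P_u^n\|\le C\alpha^n$ for $|u|>b$, and the $C^2/C^4$ regularity of $\Pi_\cdot,\lambda_\cdot$); Lemma~\ref{anisop} is not needed here, it enters elsewhere in the paper to control multiplication operators.
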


\begin{proof}

Notice that on $\pM$,
$$
1_{\{S_n=l\}}=\int_{-\pi}^\pi e^{iu(S_n-l)}du.
$$
Fix $b$ such that %$\Sigma>0$ et 
 $\beta_0 >b>0$ where $\beta_0$ is fixed from Hypothesis \ref{H1}, using the identity $P_u^n(\cdot)=P_u^n(e^{iuS_n}\cdot)$ and a change of variable $u=\frac{v}{n^{1/2}}$, %tel que $\lambda_t = e^{-\frac{\Sigma} 2 t^2}+o(t^2)$,
\begin{align}\label{pert}
P^n(1_{\{S_n=l\}}\cdot)&=\frac{1}{2\pi} \int_{-\pi}^\pi P^n(e^{iu(S_n-l)}\cdot)du \nonumber \\
&=\frac{1}{2\pi} \int_{-b}^b e^{-iul}\lambda_u^n\Pi_u(\cdot)du+O(\alpha^n)\nonumber \\
&=\frac{1}{2\pi n^{1/2}} \int_{-b n^{1/2}}^{b n^{1/2}} e^{-iul/n^{1/2}}\lambda_{u/n^{1/2}}^n\left(\Pi_0(\cdot)+un^{-1/2}\Pi_{0}'(\cdot)\right)du+O(n^{-3/2}),
\end{align}
%en effet par linéairté $\|\int_{-\beta n^{1/2}}^{\beta n^{1/2}}e^{-iuln^{-1/2}}\lambda_{un^{-1/2}} \int_0^u\Pi_{un^{-1/2}}''(\cdot)(u-s)ds\|_{L(B,B)}\leq C\sup_{t\in [-\beta,\beta]}\|\Pi_{u}''\|_{L(B,B)}
where the approximations $O(\cdot)$ are uniforms in $l$ for the operator norm $\|.\|_{L(\B,\B)}$.
We then use the following identity \eqref{expapro} from Feller's book \cite{feller}; for any $A,B \in \mathbb{R}$ and $r \in \mathbb{N}$,

\begin{align}\label{expapro}
\left|e^A-\sum_{m=0}^r \frac{B^m}{m!}\right|\leq \left|e^A-e^B\right|+\left|e^B-\frac{B^m}{m!}\right| \leq e^{A \vee B}\left(|A-B|+\left|\frac{B^{r+1}}{(r+1)!}\right|\right). 
\end{align} 
We approximate $\lambda_t$ as some exponential form,
$$
\lambda_{n^{-1/2}t}^n = e^{-\frac{\Sigma} 2 t^2+n\psi(\frac t {n^{1/2}})}
$$
where $\psi(t):=t^2\psi_2(t)+t^{4}\tilde{\psi}(t)$ has a polynomial part $\psi_2(t):= \frac{t}{3!}\psi^{(3)}(0)+\frac{t^2}{4!}\psi^{(4)}(0)$ and an error term $\tilde{\psi}$ continuous with $\tilde{\psi}(0)=0$.\\
Then applying equation \eqref{expapro} with $A:=n\psi(un^{-1/2})$, $B:=u^2\psi_2(un^{-1/2})$ and $r=2$ :
$$
\left|\lambda_{un^{-1/2}}^n-e^{-\frac{\Sigma} 2 u^2}\sum_{m=0}^r\frac{\left(u^2\psi_2(un^{-1/2})\right)^m}{m!}\right|=O\left(e^{-\frac{\Sigma u^2}{4}}\left(\frac{u^4\tilde{\psi}(u n^{-1/2})}{n}+\frac{u^9}{n^{3/2}}\right)\right).
$$
And thus,
$$
\lambda_{un^{-1/2}}^n=e^{-\frac{\Sigma} 2 u^2}\left(1+\frac{u^3}{n^{1/2}}\frac{\psi^{(3)}(0)}{3!}\right)+O\left(e^{-\frac{\Sigma u^2}{4}} Q(u)1/n \right). 
$$
Where $Q$ is polynomial.
We inject this equation into formula \eqref{pert} and get the following expansion for the norm $\|.\|_{L(\B,\B)}$,
\begin{align*}
P^{n}(1_{\{S_n=l\}}\cdot)&=\frac{1}{2\pi n^{1/2}} \int_{-\beta n^{1/2}}^{\beta n^{1/2}} e^{-iul/n^{1/2}}\lambda_{u/n^{1/2}}^n\left(\Pi_0(\cdot)+un^{-1/2}\Pi_{0}'(\cdot)\right)du+O(n^{-3/2})\\
&=\frac{1}{2\pi n^{1/2}} \int_{-\beta n^{1/2}}^{\beta n^{1/2}} \left(1-\frac{iul}{n^{1/2}}+O\left(\left(\frac{ul}{n}\right)^{1+\e}\right)\right)e^{-\frac{\Sigma^2u^2}{2}}\\
&\left(\Pi_0(\cdot)+\frac u {n^{1/2}}\Pi_0'(\cdot)+\frac{u^3}{n^{1/2}}\frac{\psi^{(3)}(0)}{3!}\Pi_0(\cdot)\right)du+O(n^{-3/2})\\
&=\frac{1}{2\pi n^{1/2}} \int_{-\beta n^{1/2}}^{\beta n^{1/2}} \left(1-\frac{iul}{n^{1/2}}\right)\left(\Pi_0(\cdot)+\frac u {n^{1/2}}\Pi_0'(\cdot)+\frac{u^3}{n^{1/2}}\frac{\psi^{(3)}(0)}{3!}\Pi_0(\cdot)\right)\\
&e^{-\frac{\Sigma^2u^2}{2}}du+O\left(\frac{(1+|l|)^{1+\e}}{n^{1+\frac{\e} 2}}\right).\\
\end{align*}
Indeed, $\int_{\mathbb{R}}|u|^m e^{-\frac{(\Sigma u)^2}2}du <\infty$ for all $m \in \N$ and 
$$
\left\{\begin{array}{r l c}
&|e^{-ix}-1-ix|\leq \frac{x^2}{2}\leq \frac{x^{1+\e}}{2}& \textit{ si }x\leq 1\\
&|e^{-ix}-1-ix|\leq (1+|x|)\leq |x|^{1+\e}& \textit{ si }x > 1,
\end{array}\right.
$$

with $O(\cdot)$ uniform in $l\in \mathbb{Z}$.
The terms in $\frac{1}{n^{1/2}}$ with symmetric factors make the integral null. Thus we conclude the lemma,
$$
P^{n}(1_{\{S_n=l\}}\cdot)=\frac{1}{(2\pi \Sigma n)^{1/2}}\Pi_0(\cdot)+G_{n,l}(\cdot)
$$
with
$$
\|G_{n,l}(\cdot)\|_{L(\B,\B)} =O\left( \frac {(1+|l|)^{1+ {\e} }}{n^{1+\frac {\e} 2}}\right).
$$
\end{proof}

%On suppose que $(\Mp,\tp,\mup)$ admet un facteur $(\pM,\pt,\pmu)$

\begin{proof}[Proof of Proposition \ref{propbirkhoff1}]
To prove Proposition \ref{propbirkhoff1} we use a criterion of convergence of moments. Inside the proof of \ref{propbirkhoff1} we will state in Lemma \ref{anisop} some approximation of key quantities into product that can be lifted to $(\pM,\pt,\pmu)$. Then the convenient mixing properties on $(\pM,\pt,\pmu)$ will allow us to decorrelate the different products involved and to treat them as i.i.d variable. 
In order to ease further the reading,  we also introduce the following notations all along the proof :
Given $X$ a subset of some vector space, we represent with thick writing $\textbf a$ a vector of at most countable coordinate( $\textbf a \in X^N$ or ${\bf a} \in X^{\N}$), we denote by $\textbf{a}_u^v$ the subvector of coordinates $(a_u,\dots,a_v)\in X^{v-u}$ and by $\underline{\textbf a}_q$ the sum of the coordinates up to $q\in \N$, $\underline{\textbf a}_q=\sum_{k=0}^qa_k$. We also state as convention that $a_{-1}=0$.
%On cherche à démontrer la Proposition \ref{propbirkhoff1} en appliquant la méthode des moments et la Proposition \ref{propcarlman} de Carleman.
As stated before, Proposition \ref{propbirkhoff1} will be proved using a convergence of moment technique and applying Carleman's criterion from \cite[Chapter VII, 3.14]{FE71}.%\marginnotemx{\\ citer carleman}.
Thus we investigate here an expression of the moment\footnote{when the dimension of the space is $d>1$ we look at the moment of $\langle u,\tilde v_t^\epsilon(x,\cdot)\rangle$ for any $u \in \mathbb R^d$ instead, we then replace the quantity $\f(x,\cdot)$ appearing in the moment by $\langle u,\f(x,\cdot)\rangle$ through the rest of the proof} of $N$-th order of $\tilde v_t^\epsilon(x,\cdot)$  for some fixed $x \in \mathbb{R}$. %et on peut poser sans perte de généralité $n:=\lfloor \frac 1 \epsilon \rfloor$.% Dans la suite on pourra noter sans perte de généralité $\epsilon=\frac 1 n$ 
%&\sim_{\epsilon \rightarrow 0} E_{\mu}\left( \tilde v_t^{\frac 1 n}(x,\cdot)^N 
\begin{align*}
&E_{\mup}\left( \tilde v_t^\epsilon(x,\cdot)^N  \right) =E_{\mup}\left( \left(\epsilon^{1/4}\sum_{k=0}^{\lfloor \frac t \epsilon \rfloor}\sum_{p \in \Z}\F(w_{\epsilon k}(x),\tp^k(w),p)1_{\{S_k=p\}} \right)^N \right)\\
&=\epsilon^{N/4}\sum_{ 0 \leq k_1, \dots, k_N \leq \lfloor t/\epsilon \rfloor}\sum_{\textbf{p} \in \Z^N}E_{\mup}\left( \prod_{i=1}^N \left( \F(w_{\epsilon k_i}(x),\tp^{k_i}(\cdot),p_i)1_{\{S_{k_i}=p_i\}} \right) \right)\\
&=\epsilon^{N/4}\sum_{q=1}^N \sum_{\textbf{N}\in (\N^*)^q,\underline{\textbf{N}}_q=N}C_{\Ng}A_{\epsilon,q,\Ng}(\F),
\end{align*}
In the product we gather the indexes $k_i$ which are identical and we set $C_{\Ng}$ the number of vectors $\textbf{k}$ having $N_i$ coordinates of value $i$ for $i\in [|1,q|]$ and $\textbf{N}\in \{\textbf{N}\in \mathbb N^q, \sum_{i=1}^q N_i=N\}$: 
$$
C_{\Ng}:=|\{\phi \in [|1,q|]^{[|1,N|]}, |\phi^{-1}(\{i\})|=N_i \}|.
$$
As for the quantity $A_{\epsilon,q,\Ng}(\F)$, it is defined as
\begin{align*}
A_{\epsilon,q,\Ng}(\F)&:= \sum_{1 \leq n_1 <\dots< n_q \leq \lfloor t/\epsilon \rfloor}\sum_{\textbf a \in \mathbb{Z}^q}E_{\mup}\left( \prod_{i=1}^q \F(w_{\epsilon n_i}(x),\tp^{n_i}(\cdot),a_i)^{N_i}1_{\{\overline S_{n_i}=a_i\}} \right)\\
&=\sum_{\textbf{l}\in (\N^*)^q, \underline{\textbf{l}}_q\leq\lfloor t/\epsilon \rfloor}\sum_{\textbf a \in \mathbb{Z}^q}\Xi_{\epsilon,q,l_i,\Ng}(\F,w_{\epsilon^{-1} \ls_1}(x),\dots,w_{\epsilon^{-1} \ls_q}(x),\textbf a)
\end{align*}
with for $\F \in \B$, $\textbf{z}\in \mathbb R^q$ and $\textbf{a} \in \mathbb{Z}^q$, 
$$
\Xi_{\epsilon,q,\textbf{l},i,\Ng}(\F,\textbf{z},\textbf{a}):=E_{\mup}\left( \left( \prod_{j=1}^q \F(z_i,\tp^{l_i}(\cdot),a_i)^{N_i}1_{\{\overline S_{l_i}=a_i-a_{i-1}\}} \right)\circ \tp^{\ls_{i-1}} \right).
$$
%{\color{blue} changer les notations $\bar S$?} 
%où $\ls_i:=\sum_{j=0}^i l_j$.\\
In what follows, we introduce the quantity 
$$
k_\epsilon:= \lceil ( \log(\frac S \epsilon))^2 \rceil
$$
this quantity will allow us to approximate with arbitrary precision when $\epsilon \to 0$
the terms $A_{\epsilon,q,\Ng}(\F)$ in the following Lemma \ref{aproxpro} in order to place the quantities on $(\pM,\pt,\pmu)$ and use the decorrelation properties.
\begin{lem}\label{aproxpro}
For any $x\in \mathbb{R}^d$, $a \in \Z$, let $(g_{k_\epsilon}(x,.,a))_{\epsilon}$ the map defined by $g_n$ in equation \eqref{unif} as an approximation of $g:=f(x,.,a)$ and satisfying Hypothesis \ref{H2}. Then 
%uniformly in $x_1, \dots, x_q \in \mathbb{R}$,
\begin{align}\label{eqmajo}
\sup_{\textbf{z}\in (\mathbb R^d)^q} \sum_{\textbf{a} \in \mathbb{Z}^q}\left|\Xi_{\epsilon,q,\ls,i,\Ng}(\F,\textbf{z},\textbf{a})-\hat \Xi_{\epsilon,q,\ls,i,\Ng}(g_{k_\epsilon},\textbf{z},\textbf{a})\right|\underset{\epsilon \rightarrow 0}{=}O\left(q\beta_0^{(\ln(1/\epsilon))^2} \left(\sup_{x\in \mathbb R}\sum_{a \in \Z}(\|f(x,.,a)\|_{lip}+\|f(x,.,a)\|_{\infty})\right)^N\right).
\end{align}
with $\hat \Xi_{\epsilon,q,\ls,i,\Ng}(g_{k_\epsilon},\textbf{z},\textbf{a}):=E_{\pmu}\left(\left( \prod_{i=1}^q g_{k_\epsilon}(z_i,\pt^{l_i}(\cdot),a_i)^{N_i}1_{\{S_{l_i}=a_i-a_{i-1}\}}\circ \pt^{k_\epsilon} \right)\circ \pt^{\ls_{i-1}} \right)$.
Thus, when $k_\epsilon= \lceil ( \log(\frac S \epsilon))^2 \rceil$,

\begin{align}\label{majo}
\lim_{\epsilon \rightarrow 0}\left|A_{\epsilon,q,\Ng}(f)-\hat A_{\epsilon,q,\Ng}(g_{k_\epsilon})\right|=0,
\end{align}
 where $\hat A_{\epsilon,q,\Ng}(g_{k_\epsilon}):=\sum_{\textbf{l}\in (\N^*)^q, \underline{\textbf{l}}_q\leq\lfloor t/\epsilon \rfloor}\sum_{\textbf{a} \in \mathbb{Z}^q}E_{\pmu}\left(\left( \prod_{i=1}^q g_{k_\epsilon}(x_i,\pt^{l_i}(\cdot),a_i)^{N_i}1_{\{S_{l_i}=a_i-a_{i-1}\}}\circ \pt^{k_\epsilon} \right)\circ \pt^{\ls_{i-1}} \right)$.
  
\end{lem}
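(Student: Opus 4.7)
The plan is to replace, factor by factor inside the product defining $\Xi_{\epsilon,q,\ls,\Ng}$, the Lipschitz observable $F(z_j,\tp^{l_j}(\cdot),a_j)$ on $\Mp$ by its approximation on $\pM$ provided by hypothesis~\eqref{unif}, while pulling the indicators of the level sets $\{\overline{S_{l_j}}=a_j-a_{j-1}\}$ back to $\pM$ through the identity $\overline{S_n}\circ\pi_1=S_n\circ\pi_2$ coming from Hypothesis~\ref{H2}. Each factor-wise replacement will cost $O(\beta_0^{k_\epsilon})$, and after a telescoping argument and an $\ell^1$-summation in $\textbf{a}$, the total error will be $O(qN\beta_0^{k_\epsilon})$ times the indicated product of norms, which is \eqref{eqmajo} for $k_\epsilon=\lceil(\log(S/\epsilon))^2\rceil$.

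Concretely, I would fix $\textbf{z},\textbf{a},\textbf{l}$ and set $f_j:=F(z_j,\cdot,a_j):\Mp\to\mathbb R$. Applying~\eqref{unif} to $f_j$ with $n=k_\epsilon$ produces $g_{k_\epsilon}(z_j,\cdot,a_j):\pM\to\mathbb R$ satisfying $\|f_j\circ\tp^{k_\epsilon}\circ\pi_1-g_{k_\epsilon}(z_j,\cdot,a_j)\circ\pi_2\|_\infty\le C\beta_0^{k_\epsilon}\|f_j\|_{lip}$. Lifting to the common extension $(M_1,T_1,\mu_1)$ via the semiconjugacies $\tp\circ\pi_1=\pi_1\circ T_1$ and $\pt\circ\pi_2=\pi_2\circ T_1$, the composition $f_j\circ\tp^{\ls_j}\circ\pi_1=f_j\circ\tp^{k_\epsilon}\circ\pi_1\circ T_1^{\ls_j-k_\epsilon}$ is approximated by $g_{k_\epsilon}(z_j,\pt^{\ls_j-k_\epsilon}\pi_2(\cdot),a_j)$ with the same uniform error. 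Absorbing a shift by $\pt^{k_\epsilon}$ on the whole integrand (which preserves $\pmu$) converts this into the $\pt^{\ls_j}$ form and simultaneously shifts each indicator by $\pt^{k_\epsilon}$, which is exactly the placement appearing in $\hat\Xi$.

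To combine the $q$ factor-wise approximations I will telescope the product via $|\prod_j u_j^{N_j}-\prod_j v_j^{N_j}|\le\sum_j N_j\max(|u_j|,|v_j|)^{N_j-1}|u_j-v_j|\prod_{j'\ne j}\max(|u_{j'}|,|v_{j'}|)^{N_{j'}}$ and employ $\|g_{k_\epsilon}(z_j,\cdot,a_j)\|_\infty\le\|f_j\|_\infty$ from Hypothesis~\ref{H2} to dominate each factor by $\|F(z_j,\cdot,a_j)\|_\infty$. After integrating against $\mu_1$ (the indicators are bounded by $1$) and summing over $\textbf{a}\in\Z^q$, the bound decouples into products of sums in each coordinate; combined with $\sup_x\sum_a\|F(x,\cdot,a)\|_{lip}<\infty$ and $\sup_x\sum_a\|F(x,\cdot,a)\|_\infty<\infty$, this produces $O(qN\beta_0^{k_\epsilon}C^N)$ with $C=\sup_x\sum_a(\|F(x,\cdot,a)\|_\infty+\|F(x,\cdot,a)\|_{lip})$, i.e.\ \eqref{eqmajo}. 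Then \eqref{majo} follows by summing \eqref{eqmajo} over $\textbf{l}\in(\N^*)^q$ with $\underline{\textbf{l}}_q\le\lfloor t/\epsilon\rfloor$ (cardinality $O((1/\epsilon)^q)$): the product $(1/\epsilon)^q\cdot\beta_0^{(\log(1/\epsilon))^2}=\exp(q\log(1/\epsilon)-(-\log\beta_0)(\log(1/\epsilon))^2)$ tends to $0$ because the quadratic in $\log(1/\epsilon)$ dominates. The main obstacle I anticipate is the careful bookkeeping of iterates of $\tp$ and $\pt$ through the common extension $M_1$, in particular placing the $\pt^{k_\epsilon}$ precomposition so that the indicators in $\hat\Xi$ and $\Xi$ agree up to a $\pmu$-invariant time shift, and dealing separately (by a trivial $\|F\|_\infty$ bound) with the marginal $\textbf{l}$ having some $l_j<k_\epsilon$, where the ``$\ls_j-k_\epsilon<0$'' rewriting is not directly available.
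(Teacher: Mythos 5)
Your overall plan does match the paper's: compose by $\tp^{k_\epsilon}$ using $\mup$-invariance, lift to the common extension $(M_1,T_1,\mu_1)$, replace the Lipschitz factors one at a time by a telescoping argument costing $O(\beta_0^{k_\epsilon}\|F(\cdot,\cdot,a)\|_{lip})$ per factor, dominate the untouched factors via $\|g_{k_\epsilon}\|_\infty\le\|F\|_\infty$ from Hypothesis~\ref{H2}, let the $\ell^1$-sum over $\textbf a$ decouple using the assumed summability of $\sum_a\|F(x,\cdot,a)\|_{lip}$ and $\sum_a\|F(x,\cdot,a)\|_\infty$, and finally kill the $O((1/\epsilon)^q)$ cardinality of the $\textbf l$-sum by the superpolynomial decay of $\beta_0^{(\log(1/\epsilon))^2}$. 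That is precisely the paper's argument and the final asymptotics are handled correctly.

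The genuine gap is in the order of the shift relative to the approximation. You write $f_j\circ\tp^{\ls_j}\circ\pi_1=(f_j\circ\tp^{k_\epsilon})\circ\pi_1\circ T_1^{\ls_j-k_\epsilon}$, approximate, and only then shift by $\pt^{k_\epsilon}$. This rewriting is meaningless whenever $\ls_j<k_\epsilon$ (negative iterate of a non-invertible map), and since $\ls_j$ is increasing the bad set is $\{\textbf l : \ls_1=l_1<k_\epsilon\}$. You flag this, but your workaround (``trivial $\|F\|_\infty$ bound on the marginal $\textbf l$'') does not close it: that set has cardinality of order $k_\epsilon(1/\epsilon)^{q-1}$, and after summing over $\textbf a$ the trivial bound on each such $|\Xi-\hat\Xi|$ is only $O(1)$, so the bad contribution to $|A_{\epsilon,q,\Ng}-\hat A_{\epsilon,q,\Ng}|$ is $O(k_\epsilon(1/\epsilon)^{q-1})$, which diverges rather than vanishing — so \eqref{majo} is not established as written. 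The paper sidesteps the whole issue by composing with $\tp^{k_\epsilon}$ inside the expectation \emph{before} lifting and approximating: every $F$-factor then sits at time $\ls_j+k_\epsilon$, is split as $(F\circ\tp^{k_\epsilon})\circ\tp^{\ls_j}$ with $\ls_j\ge 0$ always, and \eqref{unif} applies uniformly with the same $g_{k_\epsilon}$ for every $\textbf l$ and no negative-time rewriting. Reorder your steps this way (shift, then lift, then approximate) and the rest of your argument goes through verbatim.
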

\begin{proof}
Thanks to the invariance of $\mup$ by $\tp$, we can compose by $\tp^{k_\epsilon}$ in the expectancy of $A_{\epsilon,q,\Ng}(\F)$ and lift the quantity inside on $(M_1,T_1,\mu_1)$ using the projections $\pi_1$ and $\pi_2$. Thus applying the formula \eqref{unif}, we get for any $1 \leq i_0 \leq q$,

\begin{align*}
 &\sum_{\textbf{a} \in \mathbb{Z}^q}\left\|\right.E_{\mu_1}\left(\left.\left( \prod_{i=1}^{i_0-1} \F(x_i,\pi_1 \circ T_1^{l_i +k_\epsilon}(\cdot),a_i)^{N_i}1_{\{\overline S_{l_i}=a_i-a_{i-1}\}}\circ \pi_2 \circ T_1^{k_\epsilon} \right)\circ T_1^{\ls_{i-1}} \right.\right. \\ 
&\left( \F(x_{i_0},T_1^{l_{i_0+k_\epsilon}}(\cdot),p_{i_0})^{N_{i_0}}-g_{k_\epsilon}(x_{i_0}, \pi_2 \circ T_1^{l_{i_0}}(\cdot),a_{i_0})^{N_{i_0}})1_{\{S_{l_{i_0}}=a_{i_0}-a_{i_0-1}\}}\circ \pi_2\circ T_1^{k_\epsilon} \right)\circ T_1^{\ls_{i_0-1}} \\
&\left. \left. \left( \prod_{i=i_0+1}^q g_{k_\epsilon}(x_i, \pi_2\circ T_1^{l_i}(\cdot),a_i)^{N_i}1_{\{S_{l_i}=a_i-a_{i-1}\}}\circ \pi_2 \circ T_1^{k_\epsilon} \right)\circ \tp^{\ls_{i-1}} 
 \right)\right\|_{\infty} \\
& \leq \beta_0^{k_\epsilon}\sum_{a \in \Z}\|\F(x_{i_0},.,a)\|_{lip} \sum_{\textbf{l}\in (\N^*)^q, \underline{\textbf{l}}_q\leq\lfloor t/\epsilon \rfloor}\sum_{\textbf{a} \in \mathbb{Z}^q}\prod_{i\neq i_0}\|f(x_i,.,a_i)\|_\infty^{N_i}\\
&  =O\left( \beta_0^{k_\epsilon} \prod_{i=1}^N\left(\sum_{a \in \Z}(\|f(x_i,.,a)\|_{lip}+\|f(x_i,.,a)\|_{\infty})\right)\right).
\end{align*}

Then, by chain rule, we get the upper bound \eqref{eqmajo} we looked for and we deduce \eqref{majo}.
\end{proof}

%D'après le lemme, on est ramené à étudier pour $f\in B$ le comportement asymptotique d'éléments de la forme
Thanks to Lemma \ref{aproxpro}, it remains to estimate the following quantity
\begin{align*}
    \hat A_{\epsilon,q,\Ng}(g_{k_\epsilon})=\sum_{1 \leq l_1,\dots,l_q \leq n_{\epsilon}, \ls_q \leq n_{\epsilon}}b_{a_0,m,\epsilon,\textbf{l},\Ng}(g_{k_\epsilon})
\end{align*}

where the quantity $b_{a_0,m,\epsilon,\textbf{l},\Ng}(f)$ is defined for any $f\in \B$ and $\textbf{l}\in \N^q$ by
\begin{align*}
&b_{a_0,m,\epsilon,\textbf{l},\Ng}(f):=\\
&\sum_{a_1,\dots,a_q \in \mathbb Z}E_{\pmu}\left(f\left( \prod_{i=1}^q g_{k_\epsilon}(w_{\epsilon^{-1} (\ls_i+m)}(x),\pt^{l_i}(\cdot),a_i)^{N_i}1_{\{S_{l_i}=a_i-a_{i-1}\}}\circ \pt^{k_\epsilon} \right)\circ \pt^{\ls_{i-1}} \right).
\end{align*}

We use some classical trick : since $\hat \mup$ is $\pt$-invariant, then $E_{\pmu}(P(\cdot))=E_{\pmu}(\cdot)$, and we can apply $P^{\ls_q+k_\epsilon}$ in the expectancy above and iterate the identity $P^m(G\circ \pt^m f)=G P^m(f)$ to obtain the following,

\begin{align}\label{expdecor}
b_{a_0,m,\epsilon,\textbf{l},\Ng}(f)&:= E_{\pmu}\left(P^{k_\epsilon}\left(  (g_{k_\epsilon}(w_{\epsilon^{-1} (\ls_q+m)}(x),.,a_q)^{N_q}H_{k_\epsilon,l_q,a_q-a_{q-1}})\circ
\dots \right.\right. \nonumber\\
& \left. \left.\circ (g_{k_\epsilon}(w_{\epsilon^{-1} (\ls_1+m)}(x),.,a_1)^{N_1}H_{k_\epsilon,l_1,a_1-a_0}) (f) \right) \right),
\end{align}
with
$$
H_{k_\epsilon,m,a}(\cdot):=P^m(1_{\{S_m=a\}}\circ \pt^{k_\epsilon}\cdot)
$$
for any $m \geq 2k_\epsilon$. This latter term may be written as
\begin{align*}
H_{k_\epsilon,m,a}(\cdot)&=\sum_{l \in \mathbb{Z}}1_{\{S_{k_\epsilon}=l\}}P^{m-k_\epsilon}(1_{\{S_{m-k_\epsilon}=a-l\}}P^{k_\epsilon}(\cdot))\\
&=\sum_{l \in \mathbb{Z}}\sum_{l' \in \mathbb{Z}}1_{\{S_{k_\epsilon}=l\}}P^{m-2k_\epsilon}\left(1_{\{S_{m-2k_\epsilon}=a-l-l'\}}P^{k_\epsilon}(1_{\{S_{k_\epsilon}=l'\}}P^{k_\epsilon}(\cdot))\right).
\end{align*}
As in \cite{PT20}, we split $H_{k_\epsilon,m,a}(\cdot)$ as follows in order to apply the mixing property \ref{spectre} :
For any $m \geq 2k_\epsilon$, we denote by $Q_m(\cdot)$ the operator defined as 
$$
Q_{m,a}(\cdot):=P^m(1_{\{S_m=a\}}\cdot).
$$

This operator can be split according to Proposition \ref{spectre} into 
$$
Q_{m,a}(\cdot):=Q_{m,a}^{(0)}(\cdot)+Q_{m,a}^{(1)}(\cdot)\textit{ where }Q_{m,a}^{(0)}(\cdot)=\frac{1}{(2\pi \Sigma m)^{1/2}} \Pi_0(\cdot)  \textit{ and } Q_{m,a}^{(1)}(\cdot)=G_{m,a}(\cdot).
$$
We also introduce the following operator
$$
H_{k_\epsilon,m,a}^{(1)}(\cdot):=
\left\{ \begin{array}{r l c}
&H_{k_\epsilon,m,a} \textit{ if }m \leq 2k_\epsilon\\
&\sum_{l \in \mathbb{Z}}1_{\{S_{k_\epsilon}=l\}}\sum_{l' \in \Z}G_{m-2k_\epsilon,a-l}(P^{k_\epsilon}(1_{\{S_{k_\epsilon}=l'\}}(P^{k_\epsilon}\cdot)) \textit{ otherwise}
\end{array}\right.
$$ 
and when $2k_\epsilon \leq m$,
$$
H_{k_\epsilon,m,a}^{(0)}(\cdot):=\sum_{l \in \mathbb{Z}}1_{\{S_{k_\epsilon}=l\}}Q_{m-k_\epsilon,a-l}^{(0)}(P^{k_\epsilon}(\cdot))=Q_{m-k_\epsilon,0}^{(0)}.
$$ 

The last notations we introduce are the two following sets
 $E(\textbf{l}):=\{(e_i)_{1\leq i \leq q}, \textit{ if } e_i=0 \textit{ then } l_i \geq 2k_\epsilon\}$ and\\
 $\mathcal G(\textbf{e}):=\{\textbf{l}, \sum_i l_i \leq \lfloor t/\epsilon \rfloor, \textit{ and } \textbf{e} \in E(\textbf l)\}$,
and we can now decompose further $\hat A_{\epsilon,q,\Ng}(g_{k_\epsilon})$ as follows,

\begin{align*}
\hat A_{\epsilon,q,\Ng}(g_{k_\epsilon})=\sum_{1 \leq l_1,\dots,l_q \leq n_{\epsilon}, \ls_q \leq n_{\epsilon}}b_{a_0,m,\epsilon,\textbf{l},\Ng}(g_{k_\epsilon})=\sum_{\textbf{e} \in \{0,1\}^q}\sum_{\textbf{l}\in \mathcal{G}(\textbf{e})} b_{a_0,m,\epsilon,\textbf{l},\Ng}^{\textbf{e}}(g_{k_\epsilon}),
\end{align*}
%\sum_{\textbf{e} \in E(\textbf l)}
where for each $\textbf{e}=(e_1,\dots, e_q)$ and $f\in \B$,
\begin{align}
&b_{a_0,m,\epsilon,\textbf{l},\Ng}^{\textbf{e}}(f):=\sum_{a_1,\dots,a_q \in \Z}E_{\pmu}\left(P^{2k_\epsilon}\left(  (g_{k_\epsilon}(w_{\epsilon (\ls_q+m)}(x),.,a_q)^{N_q}H_{k_\epsilon,l_q,a_q-a_{q-1}}^{(e_q)})\circ
\dots \right.\right.\nonumber\\
& \left. \left.\circ (g_{k_\epsilon}(w_{\epsilon (\ls_1+m)}(x),.,a_1)^{N_1}H_{k_\epsilon,l_1,a_1-a_0}^{(e_1)}) (f) \right) \right).\label{depar}
\end{align}

In what follows, we look for the couples $(\Ng,\textbf e)$ that will provide some weight in the computation of the $Nth$ moment and get rid of the others. 
The term $Q_{m,a}^{(0)}(\cdot)=\frac{1}{(2\pi \Sigma n)^{1/2}}E_{\pmu}(\cdot)1$ appearing in $H_{k_\epsilon,m,a}^{(0)}(\cdot)$ will enable us to apply decorrelation. Notice from the expression in \eqref{depar} that for $0 \leq i \leq q$ :
\begin{align}\label{magik}
&b_{a_0,m,\epsilon,(\textbf{l},l_i,\textbf{l}'),\Ng}^{(\textbf{e},0,\textbf{e'})}(G)=b_{a_0,m,\epsilon,\textbf{l},\Ng}^{\textbf{e}}(G)b_{0,m+\ls_{i-1},\epsilon,(l_i,\textbf{l}'),(N_i,\Ng')}^{(0,\textbf{e}')}(1)\nonumber \\
&=\frac{1}{(2\pi \Sigma l_i)^{1/2}}b_{a_0,m,\epsilon,\textbf{l},\Ng}^{\textbf{e}}(G)\sum_{a_i \in \mathbb{Z}}b_{a_i,m+\ls_i,\epsilon,\textbf{l}',\Ng'}^{\textbf{e}'}\left(g_{k_\epsilon}(w_{\epsilon (m+\ls_i)}(x),.,a_i)^{N_i}\right).
\end{align}
%!!!! cas de $0$ consécutifs?!!!
Let $\textbf{e}\in \{0,1\}^q$, we denote by $\alpha_1<\dots<\alpha_K$ the indexes such that $e_{\alpha_i}=0$, we fix the following conventions $\alpha_{K+1}=q+1$ and $e_{q+1}=0$. Now we fix $\textbf{l} \in \mathcal G(\textbf{e})$ and we decompose $\frac 1 {n^{N/4}}b_{0,0,\epsilon,\textbf{l},\Ng}^{\textbf{e}}(1)$ in
\begin{align}\label{unpourotus}
\epsilon^{N/4}b_{a_0,0,\epsilon,\textbf{l},\Ng}^{\textbf{e}}(1)&=\frac{1}{(2\pi \Sigma)^{K/2}}\epsilon^{\frac{N_1+\dots+N_{\alpha_1-1}}{4}}b_{a_0,0,\epsilon, \textbf{l}_1^{\alpha_1-1},\textbf{N}_1^{\alpha_1-1}}^{(1,\dots,1)}(1)\nonumber\\
&\times\prod_{i=1}^K\frac{1}{l_{\alpha_i}^{1/2}}\epsilon^{\frac{\underline{\textbf{N}}_{\alpha_i}^{\alpha_{i+1}-1}}{4}}\sum_{a\in \Z} b_{a,\ls_{\alpha_i},\epsilon,\textbf{l}_{\alpha_i+1}^{\alpha_{i+1}-1},\textbf{N}_{\alpha_i+1}^{\alpha_{i+1}-1}}^{(1,\dots,1)}\left(g_{k_\epsilon}(w_{\epsilon \ls_{\alpha_i}(x)},.,a)^{N_{\alpha_i}}\right).
\end{align}
%$g_{k_n}(w_{n^{-1} \ls_{\alpha_i}}(x),.,a)^{N_{\alpha_i}})$
% \sum_{l \in \mathbb{Z}} (Q_{k_n,l}(g_{k_n}(w_{n^{-1} \ls_{\alpha_i}}(x),.,a)^{N_{\alpha_i}})
In what follows, we prove that the mass $\sum_{a\in \Z} b_{a,\ls_{\alpha_i},\epsilon,\textbf{l}_{\alpha_i+1}^{\alpha_{i+1}-1},\textbf{N}_{\alpha_i+1}^{\alpha_{i+1}-1}}^{(1,\dots,1)}(\cdot)$ is negligible.
Let $K_1,\dots,K_r$ be the indexes in $[|\alpha_i+1,\dots,\alpha_{i+1}-1|]$ such that $l_{K_i}\geq 2k_\epsilon$ (with the convention $K_0:=\alpha_i+1$ and $K_{r+1}:=\alpha_{i+1}$) , and 
\begin{align*}
B_{i,j,m}(\textbf{a}_{i-1}^j) :&=\left(g_{k_\epsilon}(w_{\epsilon (\sum_{k=i}^jl_k+m)}(x),.,a_j)^{N_j}H_{k_\epsilon,l_j,a_j-a_{j-1}}\right) \circ \dots\\
&   \circ \left(g_{k_\epsilon}(w_{\epsilon (l_i+m)}(x),.,a_i)^{N_i} H_{k_\epsilon,l_{i},a_{i}-a_{i-1}}(\cdot)\right). 
\end{align*}

For $h \in \B$,
\begin{align}
&b_{a,m+\ls_{\alpha_i},\epsilon,\textbf{l}_{\alpha_i+1}^{\alpha_{i+1}-1},\textbf{N}_{\alpha_i+1}^{\alpha_{i+1}-1}}^{(1,\dots,1)}(h)=\sum_{\textbf{a}_{\alpha_i+1}^{\alpha_{i+1}-1} \in \Z^{\alpha_{i+1}-1-\alpha_{i}-1}}E_{\pmu}\left(P^{2k_\epsilon}\left( \left[B_{K_r+1,K_{r+1}-1,\ls_{K_r}}(\textbf{a}_{K_r}^{K_{r+1}-1})\circ \right. \right. \right. \nonumber\\
& \left. \left( \sum_{l \in \mathbb{Z}} \left(1_{\{S_{k_\epsilon}=l\}}g_{k_\epsilon}(w_{\epsilon (\ls_{K_r}+m)}(x),.,a_{K_r})^{N_{K_r}}\right.\right.\right.\left.\left.\left. \sum_{l'\in \Z}Q_{l_{K_r}-2k_\epsilon,a_{K_r}-a_{K_r-1}-l-l'}^{(1)} \circ P^{k_\epsilon}(1_{\{S_{k_\epsilon}=l'\}}P^{k_\epsilon}(\cdot)))\right)\right) \right]\circ 
\dots  \nonumber \\
&\circ \left[B_{K_1+1,K_{2}-1,\ls_{K_1}}(\textbf{a}_{K_1}^{K_{2}-1})
%\circ \right. \nonumber\\
%\left.  
\left( 
\sum_{l \in \mathbb{Z}} \left(1_{\{S_{k_\epsilon}=l\}}g_{k_\epsilon}(w_{\epsilon (\ls_{K_1}+m)}(x),.,a_{K_1})^{N_{K_1}}\right. \right.\right. \nonumber\\
&\left. \left. \left.\left.\left.\sum_{l'\in \Z}Q_{l_{K_1}-k_\epsilon,a_{K_1}-a_{K_1-1}-l-l'}^{(1)}\left(P^{k_\epsilon}(1_{\{S_{k_\epsilon}=l'\}}P^{k_\epsilon}(\cdot))\right)\right) \right) \right]\circ   B_{K_0,K_1-1,m+\ls_{\alpha_i}}(a,\textbf{a}_{K_0}^{K_{1}-1})(h)   \right)\right)\nonumber
\end{align}
Applying the finite horizon hypothesis in the sense that there is $D>0$ such that $\|\phi \|_{\infty} \leq D$, and we deduced that $1_{\{S_{k_\epsilon}=l\}}=0$ for any $|l| >Dk_\epsilon$, we can rewrite the above quantity as follows
\begin{align}
&\sum_{\textbf{a}_{\alpha_i+1}^{\alpha_{i+1}-1} \in \Z^{\alpha_{i+1}-1-\alpha_{i}-1}} \sum_{l_{K_1}'\dots, l_{K_{r+1}}'\in \Z}E_{\pmu}\left( \left[Q_{k_\epsilon,l_{K_{r+1}}'}\circ P^{k_\epsilon}\circ B_{K_r+1,K_{r+1}-1,\ls_{K_r}}(\textbf{a}_{K_r}^{K_{r+1}-1})\circ \right. \right. \nonumber \\
&\left. \left( \sum_{ |l| \leq Dk_\epsilon } 1_{\{S_{k_\epsilon=l}\}}g_{k_\epsilon}(w_{\epsilon (\ls_{K_r}+m)}(x),.,a_{K_r})^{N_{K_r}}Q_{l_{K_r}-2k_\epsilon,a_{K_r}-a_{K_r-1}-l-l_{K_r}'}^{(1)}\right) \right]\circ \dots \nonumber\\
&\circ \left[  Q_{k_\epsilon,l_{K_2}'}\circ  P^{k_\epsilon}\circ B_{K_1+1,K_{2}-1,\ls_{K_1}}(\textbf{a}_{K_1}^{K_{2}-1}) \circ \right.\nonumber \\
&\left.\left( \sum_{ |l| \leq Dk_\epsilon} 1_{\{S_{k_\epsilon=l}\}}g_{k_\epsilon}(w_{\epsilon (\ls_{K_1}+m)}(x),.,a_{K_1})^{N_{K_1}}Q_{l_{K_1}-2k_\epsilon,a_{K_1}-a_{K_1-1}-l-l_{K_1}'}^{(1)}\right)\right]\nonumber\\
&\left. \circ Q_{k_\epsilon,l_{K_1}'}\circ  P^{k_\epsilon}\circ B_{K_0,K_1-1,m+\ls_{\alpha_i}}(a,\textbf{a}_{K_0}^{K_{1}-1})(h)\right),\label{groseq}
\end{align}
As in \eqref{expdecor}, we can use the identity $P^r(G\circ \pt^r f)=G P^r(f)$  to express\\
$Q_{k_\epsilon,l_{K_{i+1}}'}\circ P^{k_\epsilon}\circ B_{K_i+1,K_{i+1}-1,\ls_{K_i}}(\textbf{a}_{K_i}^{K_{i+1}-1})( 1_{\{S_{k_\epsilon}=l\}}g_{k_\epsilon}(w_{\epsilon (\ls_{K_i}+m)}(x),.,a_{K_i})^{N_{K_i}}\cdot)$ as
%can be written as %peut se réécrire sous la forme 
\begin{align}\label{borninf}
Q_{k_\epsilon,l_{K_{i+1}}'}\circ P^{k_\epsilon}\circ& B_{K_i+1,K_{i+1}-1,\ls_{K_i}}(\textbf{a}_{K_i}^{K_{i+1}-1})( 1_{\{S_{k_\epsilon}=l\}}g_{k_\epsilon}(w_{\epsilon (\ls_{K_i}+m)}(x),.,a_{K_i})^{N_{K_i}}\cdot):=\\
&P^{2k_\epsilon+\sum_{u=K_i+1}^{K_{i+1}}l_u}\left(h_{\epsilon,m}(\textbf{a}_{K_i}^{K_{i+1}-1}).\right),
\end{align}
with $h_{\epsilon,m}(\textbf{a}_{K_i}^{K_{i+1}-1})$ defined as follows
\begin{align*}
h_{\epsilon,m}(\textbf{a}_{K_i}^{K_{i+1}-1})&:=1_{\{S_{k_\epsilon=l_{K_{i+1}}'}\}}\circ \hat T^{k_\epsilon+\ls_{K_i+1}^{K_{i+1}}}\\
&\prod_{j=K_i}^{K_{i+1}-1}\left( g_{k_\epsilon}(w_{\epsilon^{-1} (\ls_j+m)}(x),\pt^{l_j-l_{j-1}}(\cdot),a_j)^{N_j}1_{\{S_{l_j}=a_j-a_{j-1}\}}\circ \hat T^{k_\epsilon}\right)^{N_j}\circ \hat T^{\ls_{K_i+1}^{j-1}}\\
&1_{\{S_{k_\epsilon}=l\}}g_{k_\epsilon}(w_{\epsilon^{-1} (\ls_{K_i}+m)}(x),.,a_{K_i})^{N_{K_i}}.
\end{align*}

The map $h_{\epsilon,m}(\textbf{a}_{K_i}^{K_{i+1}})$ is an element of $\B$ such that for any $x,y \in \pM$, 
\begin{align*}
s(x,y)\geq 2k_\epsilon+\sum_{u=K_i+1}^{K_{i+1}}l_u \Rightarrow h_{\epsilon,m}(\textbf{a}_{K_i}^{K_{i+1}})(x)=h_{\epsilon,m}(\textbf{a}_{K_i}^{K_{i+1}})(y)
\end{align*}
and for any $\epsilon,m$ and $a_{K_i}\in \Z$,
\begin{align}\label{hntruc}
\|h_{\epsilon,m}(\textbf{a}_{K_i}^{K_{i+1}})\|_{\infty}\leq \prod_{j=K_i}^{K_{i+1}}\left\|g_{k_\epsilon}(w_{\epsilon (\ls_{j}+m)}(x),.,a_{j})\right\|_{\infty}^{N_{j}}.
\end{align}
We apply Lemma \ref{anisop} on equation \eqref{borninf}, and using Proposition \ref{spectre}, along with equation \eqref{hntruc}  we deduce the following estimate of the terms in formula \eqref{groseq}:
\begin{align}
&\sum_{l \in \Z}\sum_{l_{K_{i+1}}'\in \Z}\left\| Q_{k_\epsilon,l_{K_{i+1}-1}'}\circ P^{k_\epsilon}\circ B_{K_i+1,K_{i+1}-1,\ls_{K_i}}(\textbf{a}_{K_i}^{K_{i+1}-1})\circ \right. \nonumber\\
& \left. \left( \sum_{l \in \mathbb{Z}} 1_{\{S_{k_\epsilon}=l\}}g_{k_\epsilon}(w_{\epsilon(\ls_{K_i}+m)}(x),.,a_{K_i})^{N_{K_i}}Q_{l_{K_i}-2k_\epsilon,a_{K_i}-a_{K_i-1}-l-l_{K_i}'}^{(1)}\right)\right\|_{L(\B,\B)} \label{groseqbis}\\
&\leq \sum_{|l| \leq Dk_\epsilon}\sum_{|l_{K_{i+1}}'|\leq Dk_\epsilon}  \prod_{j=K_i}^{K_{i+1}}\left\|g_{k_\epsilon}(w_{\epsilon (\ls_{j}+m)}(x),.,a_{j})\right\|_{\infty}^{N_{j}}O\left(\frac{|a_{K_i}-a_{K_i-1}-l-l_{K_i}'|^{1+\e}}{|l_{K_i}-2k_\epsilon|^{1+\frac {\e} 2}}\right)\, .\nonumber
\end{align}
It follows that
\begin{align*}
\left|\eqref{groseqbis}\right|
&\leq (2Dk_\epsilon)^{2} \prod_{j=K_i}^{K_{i+1}}\left\|g_{k_\epsilon}(w_{\epsilon (\ls_{j}+m)}(x),.,a_{j})\right\|_{\infty}^{N_{j}} O\left(\frac{(||a_{K_i}|+1|||a_{K_i-1}|+1||1+k_\epsilon|^2)^{1+\e}}{|l_{K_i}-2k_\epsilon|^{1+\frac {\e} 2}}\right).
\end{align*}

We now estimate equation \eqref{groseq} using the above majoration as well and the convergence (by assumption) of the sum  $\sum_{a \in \Z}(1+|a|)^{2(1+\e)}\|g_{k_\epsilon}\|_{\infty}$:

\begin{align}
&\sum_{a \in \Z}\sum_{\textbf{l}\in \mathcal{G}_{\alpha_i+1,\alpha_{i+1}-1}(\textbf{e}) }\| b_{a,m+\ls_{\alpha_i},\epsilon,\textbf{l}_{\alpha_i+1}^{\alpha_{i+1}-1},\textbf{N}_{\alpha_i+1}^{\alpha_{i+1}-1}}^{(1,\dots,1)}(\sum_{l \in \Z}Q_{k_\epsilon,l}(g_{k_\epsilon}(w_{\epsilon \ls_{\alpha_i}(x)},.,a)^{N_{\alpha_i}}))\|_{B}\nonumber \\
&\leq k_\epsilon^q(2D(1+k_\epsilon))^{r(2+1+\e)} \sup_{x\in \mathbb{R}}\sum_{\textbf{a}_{\alpha_i+1}^{\alpha_{i+1}-1} \in \Z^{\alpha_{i+1}-2-\alpha_i}}\prod_{u=\alpha_i+1}^{\alpha_{i+1}-1}(1+|a_u|)^{2(1+\e)}\|g_{k_\epsilon}(x,.,a_u)\|_{\infty}^{N_u}\nonumber\\
&\sum_{k_\epsilon \leq l_{K_1}\dots, l_{K_{r+1}}} O(\prod_{i=K_1}^{K_r} (l_i-2k_\epsilon)^{-(1+\frac{\e}{2})})\nonumber\\
&=O(k_\epsilon^{q+r(2+1+\e)}).\label{erreurptk}
\end{align}
Where $\mathcal G_{i,j}(\textbf{e}):=\{(\textbf{l}_i^j), \textbf{l} \in \mathcal G (\textbf{e})\}$.

Thus for any $1 \leq i \leq K$, 
\begin{align*}
&\epsilon^\frac{\underline{\textbf{N}}_{\alpha_i}^{\alpha_{i+1}-1}}{4}\sum_{\textbf l \in \mathcal{G}_{\alpha_i+1,\alpha_{i+1}-1}(\textbf{e})}\left\|\sum_{a\in \Z} b_{a,m+\ls_{\alpha_i},\epsilon,\textbf{l}_{\alpha_i+1}^{\alpha_{i+1}-1},\textbf{N}_{\alpha_i+1}^{\alpha_{i+1}-1}}^{(1,\dots,1)}(\sum_{l \in \Z}Q_{k_\epsilon,l}(g_{k_\epsilon}(w_{\epsilon \ls_{\alpha_i}(x)},.,a)^{N_{\alpha_i}}))\right\|_{B}\\
&=O(k_\epsilon^{q+r(2+1+\e)}\epsilon^{\frac{\underline{\textbf{N}}_{\alpha_i}^{\alpha_{i+1}-1}}{4}-\frac 1 2}).
\end{align*}
The latter converges to $0$ with rate in $O(\epsilon^{1/8})$ whenever $\underline{\textbf{N}}_{\alpha_i}^{\alpha_{i+1}-1}\geq 3$ and is in $O(k_\epsilon^{q+r(2+1+\e)})$ when $\underline{\textbf{N}}_{\alpha_i}^{\alpha_{i+1}-1}=2$.\\
finally, in the special case where $\underline{\textbf{N}}_{\alpha_i}^{\alpha_{i+1}-1}=1$ (equivalently $\alpha_{i+1}=1+\alpha_i$ and $N_{\alpha_i}=1$), we use the fact that $g_{k_\epsilon}$ has by assumption zero mean. 
$$
b_{a,m,\epsilon,(l_{\alpha_i},l',\textbf{l}),(1,N_2,\textbf{N})}^{(0,0,\textbf{e})}(G)=E_{\pmu}\left(g_{k_\epsilon}(w_{\epsilon (l_{\alpha_i}+m)}(x),.,a_q)E_{\pmu}(\dots)\right)=0.
$$ 

Thus, summing formula \eqref{unpourotus} over all possible elements $\textbf{l}$, 
\begin{align}\label{obscur}
\frac{1}{\epsilon^{q/4}}\sum_{\textbf{l}_1^q,\ls_q\leq \lfloor t/\epsilon\rfloor}|b_{a_0,0,\epsilon,\textbf{l},\Ng}^{\textbf{e}}(1)| \underset{\epsilon \rightarrow 0}{\overset {L^1_{\pmu}} {\rightarrow}} 0,
\end{align}
as soon as there is an index $1 \leq i \leq K$ such that $\underline{\textbf{N}}_{\alpha_i}^{\alpha_{i+1}-1}\geq 3$.

The couples $(\textbf{e},\textbf{N})$ such that $\epsilon^{q/4}\sum_{l_1,\dots,l_q,\ls_q\leq \lfloor t /\epsilon\rfloor}b_{a_0,0,\epsilon,\textbf{l},\Ng}^{\textbf{e}}(1)$ is not negligible are thus those satisfying for any $1\leq i \leq K$ (with $K=\frac N 2$), whether 
\begin{align}\label{r1}
2+\alpha_i=\alpha_{i+1} \textit{ and } N_{\alpha_i}=N_{\alpha_{i+1}}=1,
\end{align}
or
\begin{align}\label{r2}
1+\alpha_i=\alpha_{i+1} \textit{ and } N_{\alpha_i}=2.
\end{align}
We call a couple satisfying one of these two relations an \textbf{admissible} couple.
For such a couple $(\textbf{e},\textbf{N})$, we denote by $J_2$ the set of indexes $i$ satisfying relation \eqref{r1} and $J_1$ those satisfying relation \eqref{r2}. For $\textbf{l}\in \mathcal G(\textbf{e})$, $b_{a_0,0,\epsilon,\textbf{l},\Ng}^{\textbf{e}}(1)$ can be decomposed by applying
 \eqref{magik} while noticing that the terms $b_{0,\ls_{i-1},\epsilon,(l_i,l_{i+1}),(1,1)}^{(0,\textbf{e})}$ are independent of the first index 
 $a_0$ (since $Q_{m,a}^{(0)}(\cdot)$ does not depend of $a$),
\begin{align*}
b_{0,0,\epsilon,\textbf{l},\Ng}^{\textbf{e}}(1)=\left(\prod_{i \in J_2}b_{0,\ls_{i-1},\epsilon,(l_i,l_{i+1}),(1,1)}^{(0,1)}(1)\right) \left(\prod_{i \in J_1}b_{a_0,\ls_{i-1},\epsilon,(l_i),(2)}^{(0)}(1)\right),
\end{align*}
with the convention $\ls_0=0$.
We recall that $g_{k_\epsilon}$ has zero mean (by Hypothesis \ref{H2}), thus $Q_{l_{i+1},a}^{(0)}(g_{k_\epsilon})=0$ and we can apply the following relation :
\begin{align*}
&b_{0,\ls_{i-1},\epsilon,(l_i,l_{i+1}),(1,1)}^{(0,1)}(1)=\frac{1}{(2\pi \Sigma l_i)^{1/2}}\sum_{a_1 \in \Z} b_{0,\ls_i,\epsilon,(l_{i+1}),(1)}(g_{k_\epsilon}(w_{\epsilon \ls_i}(x),.,a_1))\\
&=\sum_{l_{i+1}\leq \lfloor t/\epsilon\rfloor} \frac{1}{(2\pi \Sigma l_i)^{1/2}}\\
&\sum_{a_1,a_2 \in \mathbb Z}E_{\pmu}\left(  g_{k_\epsilon}(w_{\epsilon \ls_i}(x),.,a_1)1_{\{S_{l_{i+1}}=a_1-a_{2}\}}\circ \pt^{k_\epsilon} g_{k_\epsilon}(w_{\epsilon \ls_{i+1}}(x),\pt^{l_{i+1}}(\cdot),a_2) \right).\\
\end{align*}
%&=\sum_{l \leq  \lfloor nt\rfloor} \frac{1}{(2\pi \Sigma l_i)^{1/2}}\sum_{a_1,a_2 \in \mathbb Z}E_{\pmu}\left(  g_{k_n}(w_{n^{-1} \ls_i}(x),\pt^{l_{i+1}}(\cdot),a_1)1_{S_{l_i}=a_1-a_2}\circ \pt^{k_n} g_{k_n}(w_{n^{-1} \ls_{i+1}}(x),.,a_2) \right)\\
%&=\frac{1}{(2\pi \Sigma l_i)^{1/2}}O( k_n \beta^{k_n}l_{i+1}^{-3/2}).\\

To gather the last terms $b_{0,0,\epsilon,\textbf{l},\Ng}^{\textbf{e}}(1)$ and make a direct computation of the moments, we pull back the expression onto the dynamical system $(M,T,\mu)$ using the following lemma :  
%Pour rassembler ces derniers termes $b_{0,0,\epsilon,\textbf{l},\Ng}^{\textbf{e}}(1)$ et calculer les moments directement, on se replace dans le système $(M,T,\mu)$, ce qui nous est permis par le lemme suivant.

\begin{lem}\label{convv}
For any $x_1, x_2 \in \mathbb{R}^d$,
$$
I_{\mu}\left(\F(x_1,T^l(\cdot))\F(x_2,\cdot)\right)=O(l^{-(1+\e/4)})
$$
uniformly in $x_1$ and $x_2$. In particular,
$$
\sigma^2(f)(x_1,x_2):=\sum_{l\in \Z}I_{\mu}\left(\F(x_1,T^{|l|}(\cdot))\F(x_2,\cdot)\right)
$$
converges and
\begin{align}\label{cruc}
\lim_{\epsilon \rightarrow 0} &\sum_{0\leq l \leq \lfloor t/\epsilon \rfloor}\sum_{a_1,a_2 \in \mathbb Z}E_{\pmu}\left(  g_{k_\epsilon}(w_{\epsilon (\ls_{i})}(x_1),\pt^{l}(\cdot),a_1)1_{\{S_{l}=a_1-a_2\}}\circ \pt^{k_\epsilon} g_{k_\epsilon}(w_{\epsilon (\ls_{i}+l)}(x_2),.,a_2) \right) \nonumber\\
&=\lim_{\epsilon\rightarrow 0} \sum_{l\geq 0}I_{\mu}(\F(w_{\epsilon (\ls_{i}+l)}(x_1),T^l(\cdot))\F(w_{\epsilon (\ls_i)}(x_2),\cdot)).
\end{align}
%uniformément en $x_1$ et $x_2$.

\end{lem}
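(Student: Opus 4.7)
The plan is to exploit the $\Z$-extension structure of $(M,T,\mu)$ together with the mixing local limit theorem provided by Proposition~\ref{spectre}. Writing $M=\Mp\times\Z$ with $\mu=\mup\otimes m$ and $T(\omega,c)=(\tp\omega,c+\phi(\omega))$, one first expands
$$I_\mu\bigl(\F(x_1,T^l\cdot)\F(x_2,\cdot)\bigr)=\sum_{a,b\in\Z}E_{\mup}\bigl(\F(x_1,\tp^l(\cdot),b)\F(x_2,\cdot,a)\,1_{\{\overline S_l=b-a\}}\bigr).$$
Passing through the common factor $(M_1,T_1,\mu_1)$ and invoking Hypothesis~\ref{H2}, each $\F(x_i,\cdot,c)$ can be replaced by its smoothing $g_n(x_i,\cdot,c)$ living on $\pM$, at a chosen scale $n=n(l)$, producing an error of order $\beta_0^n$ times the Lipschitz norms of $\F(x_i,\cdot,c)$, which is summable in $a,b$ by the first assumption of Theorem~\ref{thmbirkhoff}. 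After $\pmu$-invariance and the transfer identity $E_{\pmu}(h\cdot f\circ\pt^l)=E_{\pmu}(f\cdot P^l h)$, each summand becomes $E_{\pmu}\bigl(g_n(x_1,\cdot,b)\cdot P^l[g_n(x_2,\cdot,a)\,1_{\{S_l=b-a\}}]\bigr)$.

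Proposition~\ref{spectre} then furnishes the splitting $P^l(1_{\{S_l=b-a\}}\cdot)=(2\pi\Sigma l)^{-1/2}\Pi_0(\cdot)+G_{l,b-a}(\cdot)$ with $\|G_{l,b-a}\|_{L(\B)}=O\bigl((1+|b-a|)^{1+\e}/l^{1+\e/2}\bigr)$. The key cancellation is that the main term, once summed over $a,b$, factors as
$$(2\pi\Sigma l)^{-1/2}\Bigl(\sum_{a\in\Z}E_{\mup}(\F(x_2,\cdot,a))\Bigr)\Bigl(\sum_{b\in\Z}E_{\mup}(\F(x_1,\cdot,b))\Bigr)=0,$$
thanks to the centering assumption $\int_M\F(x,\cdot)d\mu=0$ (up to an $O(\beta_0^{2n}/\sqrt{l})$ approximation error, which is negligible). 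For the remainder, the inequality $(1+|b-a|)^{1+\e}\le(1+|a|)^{1+\e}(1+|b|)^{1+\e}$ combined with the hypothesis $\sup_x\sum_a(1+|a|)^{2(1+\e)}\|\F(x,\cdot,a)\|_\infty<\infty$ makes $\sum_{a,b}$ of the remainder bounded, uniformly in $x_1,x_2$, by $O(l^{-(1+\e/2)})$. Calibrating $n=n(l)\sim c\log l$ with $c\log(1/\beta_0)$ sufficiently large absorbs the approximation losses into the slightly weaker exponent $1+\e/4$ stated in the lemma, yielding the uniform decay and hence the absolute convergence of $\sigma^2(f)(x_1,x_2)$.

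For~\eqref{cruc}, note that for each fixed $l$ the left-hand side summand is precisely the lifted version (via Hypothesis~\ref{H2} applied with $n=k_\epsilon=\lceil(\log(S/\epsilon))^2\rceil$) of the corresponding $\mu$-covariance on the right-hand side, possibly after the relabelling provided by $T$-invariance $\mu\circ T^{-l}=\mu$. Since $k_\epsilon\to\infty$ as $\epsilon\to 0$, the approximation error tends to $0$ pointwise in $l$. The uniform-in-$x_1,x_2$ tail estimate $O(l^{-(1+\e/4)})$ just established is independent of $\epsilon$ and summable, so dominated convergence permits the exchange of $\lim_{\epsilon\to 0}$ with $\sum_{l\geq 0}$ and produces~\eqref{cruc}.

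The main obstacle I anticipate is the bookkeeping around the smoothing scale $n=n(l)$: the Lipschitz-norm factors of $g_n$ implicit in the Hypothesis~\ref{H2} approximation and in applying $G_{l,b-a}$ on $\B$ must be kept under control uniformly in $n$, $l$, and $x_1,x_2$. The slack $\e/2\to\e/4$ in the final exponent is reserved precisely to absorb these losses and the cost of substituting $\F\leftrightarrow g_n$ at both factors of the covariance.
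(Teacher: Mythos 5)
Your proposal follows the paper's proof in all essentials: an adaptive smoothing scale $n(l)$ tending to infinity logarithmically in $l$ (the paper takes $k'_l=\lceil(\log l)^2\rceil$, you take $c\log l$ with $c$ large; both make the $\beta_0^n\Vert\cdot\Vert_{lip}$ approximation error from Hypothesis~\ref{H2} summable in $l$), then the transfer-operator identity plus Proposition~\ref{spectre} to split off the $(2\pi\Sigma l)^{-1/2}\Pi_0$ leading term, cancellation of that term by centering, and a remainder bound from $\Vert G_{l,\cdot}\Vert_{L(\B)}$ together with the $(1+|a|)^{2(1+\e_0)}$ summability hypothesis, with \eqref{cruc} then following by comparing approximation errors to the summable tail. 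Two small observations: the paper kills the $\Pi_0$ term \emph{term-by-term} because Hypothesis~\ref{H2} makes each $g_n(x,\cdot,a)$ itself $\hat{\mup}$-centered, which is slightly cleaner than your route of summing in $a,b$ and invoking the $\mu$-centering of $F$ (though both are valid); and the uniform control you flag as the main obstacle is precisely what Lemma~\ref{anisop} supplies --- it converts the potentially large $\Vert g_n\Vert_{\B}$ into the harmless $\Vert g_n\Vert_\infty$ after sandwiching with $P^{2k}$, and you need to invoke it explicitly at each factor to keep the $\B$-operator norms independent of $n$.
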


\begin{proof}
To prove the first point, we can notice that the proof of Lemma \ref{aproxpro} actually states that for any $l \in \N^*$
%Pour montrer le premier point, d'après le lemme \ref{aproxpro}, pour $l \in \N^*$,
\begin{align*}
\sup_{x_1,x_2\in \mathbb R}\left|I_{\mu}\left(\F(x_1,T^l(\cdot))\F(x_2,\cdot)\right)-
\sum_{a_1,a_2 \in \mathbb Z}E_{\pmu}\left(  g_{k_\epsilon}(x_1,\pt^{l}(\cdot),a_1)1_{\{S_{l}=a_1-a_2\}}\circ \pt^{k_\epsilon} g_{k_\epsilon}(x_2,.,a_2) \right)\right|\\
=O(l^2\beta_0^{k_\epsilon}\|f\|_{\infty}^2).
\end{align*}
Instead of  approximating $f$ by $g_{k_\epsilon}$ in the above formula, we approximate it by $g_{k_l'}$ with \\
$k_l':=\lceil \log(l)^2 \rceil$, then the difference above is summable and finally we only need to check the following convergence to ensure the summability of $\sup_{x_1,x_2\in \mathbb R}I_{\mu}\left(\F(x_1,T^l(\cdot))\F(x_2,\cdot)\right)$:
\begin{align}\label{eqconvappro}
\lim_{\epsilon \rightarrow 0} \sum_{l \leq \lfloor t/\epsilon \rfloor}\sup_{x_1,x_2\in \mathbb R}\left|\sum_{a_1,a_2 \in \mathbb Z}E_{\pmu}\left(  g_{k'_l}(x_1,\pt^{l}(\cdot),a_1)1_{\{S_{l}=a_1-a_2\}}\circ \pt^{k'_l} g_{k_{l}'}(x_2,.,a_2) \right)\right| < \infty.
\end{align}
In addition the rate of convergence of the terms in the above sum will provide the rate of convergence of the terms $I_{\mu}\left(\F(x_1,T^l(\cdot))\F(x_2,\cdot)\right)$.
We thus apply lemmas \ref{anisop} and \ref{spectre} page \pageref{spectre} to estimate the terms in the sum above: for any $x_1,x_2\in \mathbb R$,
\begin{align}
&\left|\sum_{a_1,a_2 \in \mathbb Z}E_{\pmu}\left(  g_{k'_l}(x_1,\pt^{l}(\cdot),a_1)1_{\{S_{l}=a_1-a_2\}}\circ \pt^{k_l'} g_{k_l'}(x_2,.,a_2) \right) \right|\nonumber \\
&=\left| \sum_{a_1,a_2 \in \mathbb Z}E_{\pmu}\left[ \sum_{u \in \Z}\sum_{v \in \Z}Q_{k_l',u}\left(g_{k_l'}(x_1,.,a_1)Q_{l-2k_l',a_2-a_1-u-v}\left(Q_{k_l',v}\left(P^{k_l'}g_{k_l'}(x_2,.,a_2)\right) \right) \right) \right] \right|\label{1eqconvsommoment} \\
&=\left|\sum_{a_1,a_2 \in \mathbb Z}E_{\pmu}\left[\sum_{u \in \Z}\sum_{v \in \Z} P^{k_l'}\left(Q_{k_l',u}(g_{k_l'}(x_1,.,a_1)G_{l-2k_l',a_2-a_1-u-v}\left(Q_{k_l',v}\left(P^{k_l'}g_{k_l'}(x_2,.,a_2)\right) \right) \right)\right]\right|\label{2eqconvsommoment}\\
&\leq \sum_{a_1,a_2 \in \mathbb Z}\sum_{u \in \Z}\sum_{|v|\leq Dk_l'}  \|P^{k_l'}Q_{k_l',u}(g_{k_l'}(x_1,.,a_1)\cdot)\|_{L(\B,\B)}\|G_{l-2k_l',a_2-a_1-u-v}\|_{L(\B,\B)}\|Q_{k_l',v}\left(P^{k_l'}(g_{k_l'}(x_2,.,a_2))\right)\|_{\B}\nonumber\\
&\leq (Dk_l')^2\sup_{x_1,x_2\in \mathbb R}\sum_{a_1,a_2 \in \mathbb{Z}} \|\F(x_1,.,a_2)\|_{\infty} \|\F(x_2,.,a_2)\|_{\B} O\left(\frac{\left((|a_2|+1)(|a_1|+1)(2k_l')\right)^{1+\e}}{(l-2k_l')^{1+\frac{\e} 2}}\right)\nonumber\\
&=O\left(\frac{(k_l')^{3+\e}}{(l-2k_l')^{1+\frac{\e} 2}}\right).\nonumber
\end{align}
Where $G_{n,l}$ is the operator defined in Proposition \ref{spectre}. and we went from line \eqref{1eqconvsommoment} to \eqref{2eqconvsommoment} by noticing that
$$
\sum_{a_2 \in \Z}Q^{(0)}_{l-k_l',a_2-a_1-u}\left(P^{k_l'}g_{k_l'}(x_2,.,a_2) \right)=\sum_{a_2 \in \Z}Q^{(0)}_{l-k_l',0}\left(P^{k_l'}g_{k_l'}(x_2,.,a_2) \right)=0,
$$ 
and the majoration at the last line is done using the Proposition \ref{spectre} on spectral  gap for perturbed operator and the assumption on the decay rate of $F$ from the Proposition \ref{propbirkhoff1} we want to prove :  $\sup_{x\in \mathbb R}\sum_{a \in \Z}(1+|a|)^{2(1+\e)}\|\F(x,.,a)\|_{\infty}<\infty$.
%Où on a utilisé la majoration $\|P^{k_n}f\|_{L(B,B)}\leq C\|f\|_{L^1}$. !!! $B^*$ représente le dual pour l'espérance!!!.\\
Thus the convergence in formula \eqref{eqconvappro} holds and we can conclude the first point of our lemma:
%L'identité ci-dessus est valable pour $l\geq 2$ et représente un terme sommable, donc 
$$
\sup_{x_1,x_2\in \mathbb R}I_{\mu}\left(\F(x_1,T^l(\cdot))\F(x_2,\cdot)\right)=O(l^{-(1+\e/4)}) \textit{ and } \sum_{l \in \Z}\sup_{x_1,x_2\in \mathbb R}I_{\mu}\left(\F(x_1,\tp^{|l|}(\cdot))\F(x_2,\cdot)\right)<\infty.
$$
The above convergence along with the convergence stated in Lemma \ref{aproxpro} then provides the conslusion of the second point of the lemma: the convergence in formula \eqref{cruc}.
%uniformément en la donnée de $x_1$ et $x_2$.
\end{proof}
From Lemma \ref{convv} we deduce that for $(\textbf{e},\textbf{N})$ satisfying \eqref{r1} or \eqref{r2}, 
\begin{align}
&\lim_{\epsilon \rightarrow 0}\epsilon^{N/4}\sum_{\textbf{l}\in \mathcal G(\textbf{e})}b_{a_0,0,\epsilon,\textbf{l},\Ng}^{\textbf{e}}(1)=\lim_{\epsilon \rightarrow 0} \epsilon^{N/4}\sum_{\textbf{l}\in \mathcal G(\textbf{e})}\left(\prod_{i \in J_2}b_{0,\ls_{i-1},\epsilon,(l_i,l_{i+1}),(1,1)}^{(0,1)}(1)\right) \left(\prod_{i \in J_1}b_{a_0,\ls_{i-1},\epsilon,(l_i),(2)}^{(0)}(1)\right)\nonumber\\
&= \lim_{\epsilon \rightarrow 0}\frac{\epsilon^{N/4}}{(2\pi \Sigma)^{N/4}}\sum_{\textbf{l}\in \mathcal G(\textbf{e})}\left(\prod_{i=1}^K\frac 1 {l_{\alpha_i}^{1/2}}\right)\nonumber\\
&\left(\prod_{i \in J_2}I_{\mu}\left(\F(w_{\epsilon\ls_i}(x),\cdot)\F(w_{\epsilon\ls_{i+1}}(x),T^{l_{i+1}}(\cdot)\right)\right) \left(\prod_{i \in J_1}I_{\mu}(\F(w_{\epsilon\ls_i}(x),\cdot)^2)\right)\nonumber\\
&= \lim_{\epsilon \rightarrow 0}\frac{\epsilon^{N/4}}{(2\pi \Sigma)^{N/4}}\sum_{1\leq l_1,\dots,l_{K},\ls_{K}\leq \lfloor t/\epsilon \rfloor}\left(\prod_{i=1}^K\frac 1 {l_i^{1/2}}\right)\label{eqegali}\\
&\left(\prod_{i \in J_2}\sum_{l=1}^\infty I_{\mu}\left(\F(w_{\epsilon\ls_i}(x),\cdot)\F(w_{\epsilon\ls_i}(x),T^{l}(\cdot)\right)\right) \left(\prod_{i \in J_1}I_{\mu}(\F(w_{\epsilon\ls_i}(x),\cdot)^2)\right),\label{egali}
\end{align}
where \eqref{egali} is obtained by noticing that Lemma \ref{convv} allows us to neglect some perturbations in the first coordinate of $f$ :
Indeed, fix the parameter $\alpha_\epsilon>0$ such that $\alpha_\epsilon \underset{\epsilon \to 0}{\to}\infty$ and $\alpha_\epsilon \underset{\epsilon \to 0}{=}o\left( \frac 1 \epsilon\right)$;
first one can split $\mathcal G(\textbf{e})=A\cup \bigcup_{i\in J_2}A_{i+1}$ between a central part\\
$A:=\{\textbf{l}\in \mathcal G(\textbf{e}),\sum_{j,j-1 \notin J_2}l_j\leq \lfloor t/\epsilon \rfloor-K\alpha_\epsilon\}$ and marginal ones composed by\\
$A_i:=\{\textbf{l}\in \mathcal G(\textbf{e}),\lfloor t/\epsilon \rfloor-\alpha_\epsilon \leq \sum_{j\neq i}l_j\leq \lfloor t/\epsilon \rfloor\}$.
Thanks to Lemma \ref{convv}, the terms in the sum in \eqref{eqegali} may be estimated from above by the term
\begin{align}\label{eqriemann}
\left(\prod_{i=1}^K\frac 1 {l_{\alpha_i}^{1/2}}\right)
\left(\prod_{i \in J_2}l_{i+1}^{-(1+\frac{\epsilon_0}{4})}\right) \left(\prod_{i \in J_1}I_{\mu}(\F(w_{\epsilon\ls_i}(x),\cdot)^2)\right).    
\end{align}
The convergence of Riemann sums then implies that the sum over each marginal term $A_i$ for $i-1\in J_2$\\
$\epsilon^{N/4}\sum_{\textbf{l}\in A_i}\left(\prod_{j=1}^K\frac 1 {l_{\alpha_j}^{1/2}}\right)
\left(\prod_{j \in J_2}l_{j+1}^{-(1+\frac{\epsilon_0}{4})}\right)$ tends to $0$. We are left with the sum over the central part $A$ which provides the line \eqref{egali} through the following reasoning :

indeed, in one hand \\
 $\sum_{(1/\epsilon)^{1/4} \leq l\leq \lfloor t/\epsilon \rfloor}  I_{\mu}\left(\F(w_{\epsilon\ls_i}(x),\cdot)\F(w_{\epsilon\ls_i+l}(x),T^{l}(\cdot))\right)$ converges to $0$ (uniformly in $l_i$) and on the other hand, since $s \mapsto  \F(w_s(x),\cdot)$  is uniformly Lipschitz, there is $C_{\F}>0$ such that
\begin{align*}
&\sum_{l \leq (1/\epsilon)^{1/4}}\left|I_{\mu}\left(\F(w_{\epsilon\ls_i}(x),\cdot)\F(w_{\epsilon\ls_i+l}(x),T^{l}(\cdot)\right)-I_{\mu}\left(\F(w_{\epsilon\ls_i}(x),\cdot)\F(w_{\epsilon\ls_i}(x),T^{l}(\cdot)\right)\right|\\
&\leq \sum_{l \leq (1/\epsilon)^{1/4}}I_{\mu}\left(|\F(w_{\epsilon\ls_i}(x),\cdot)| \right)C_{\F}\epsilon l=O(\epsilon^{1/2}),
\end{align*}
uniformly in $\textbf l$. 
Thus, we can substitute \eqref{egali}  \\
$\sum_{1\leq l_{i+1} \leq \lfloor t/\epsilon\rfloor -\ls_i}I_{\mu}\left(\F(w_{\epsilon\ls_i}(x),\cdot)\F(w_{\epsilon\ls_{i+1}}(x),T^{l_{i+1}}(\cdot))\right)$ by\\
$\sum_{1\leq l_{i+1} \leq \lfloor t/\epsilon\rfloor -\ls_i}I_{\mu}\left(\F(w_{\epsilon\ls_i}(x),\cdot)\F(w_{\epsilon\ls_{i+1}}(x),T^{l_{i+1}}(\cdot))\right)$\\  and since $\textbf{l}\in A$,  $\lfloor t/\epsilon\rfloor -\ls_i\geq \alpha_\epsilon \to \infty$  and thus it can be replaced by its limit\\
$\sum_{l=1}^\infty I_{\mu}\left(\F(w_{\epsilon\ls_i}(x),\cdot)\F(w_{\epsilon\ls_i}(x),T^{l}(\cdot))\right)$.

We introduce the following map $G$ corresponding to a continuous version of the terms in line \eqref{egali} with respect to $\textbf{u}\in \mathbb{R}_+^K$ instead of $\textbf{l}\in \N^K$.
\begin{lem}
The map $G$ described a follows is uniformly continuous on $\mathbb{R}_+^K$
\begin{align*}
G(\textbf{u}):=&\left(\prod_{i \in J_2}\sum_{l=1}^\infty I_{\mu}\left(\F(w_{\sum_{k=1}^iu_k}(x),.)\F(w_{\sum_{k=1}^iu_k}(x),T^{l}(.)\right)\right) \left(\prod_{i \in J_1}I_{\mu}(\F(w_{\sum_{k=1}^iu_k}(x),.)^2)\right).
\end{align*}
\end{lem}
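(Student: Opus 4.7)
The plan is to reduce the uniform continuity of $G$ to that of each of its finitely many factors, viewed as functions of a single real variable. Writing $\sigma_i(\textbf{u}):=\sum_{k=1}^{i}u_k$, set $f_i(s):=I_{\mu}(\F(w_s(x),\cdot)^2)$ for $i\in J_1$ and $g_i(s):=\sum_{l=1}^{\infty}I_{\mu}(\F(w_s(x),\cdot)\F(w_s(x),T^l\cdot))$ for $i\in J_2$, so that $G(\textbf{u})=\prod_{i\in J_1}f_i(\sigma_i(\textbf{u}))\cdot\prod_{i\in J_2}g_i(\sigma_i(\textbf{u}))$. Since each $\textbf{u}\mapsto\sigma_i(\textbf{u})$ is $1$-Lipschitz from $\mathbb{R}_+^K$ to $\mathbb{R}_+$, and a finite product of bounded uniformly continuous functions is uniformly continuous, it suffices to prove that each $f_i$ and each $g_i$ is bounded and uniformly continuous on $\mathbb{R}_+$.

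For the $J_1$-factors $f_i$, the hypothesis $\sup_{y\in\mathbb{R}^d}\sum_{a\in\mathbb{Z}}\|\F(y,\cdot,a)\|_{\infty}<\infty$ combined with the skew-product form $\mu=\mup\otimes m$ and $T$-invariance yields a uniform bound $|f_i(s)|\leq C_0^2$. Decomposing $f_i(s)-f_i(s')=I_{\mu}((\F(w_s(x),\cdot)-\F(w_{s'}(x),\cdot))(\F(w_s(x),\cdot)+\F(w_{s'}(x),\cdot)))$ and using the uniform Lipschitz continuity of $\F(\cdot,\omega)$ in its first coordinate (uniformly in $\omega\in M$) together with the Lipschitz regularity of $s\mapsto w_s(x)$, one gets $|f_i(s)-f_i(s')|\leq C_1|s-s'|$, so $f_i$ is Lipschitz.

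For the $J_2$-factors $g_i$, the key ingredient is the uniform decay estimate $|I_{\mu}(\F(x_1,T^l\cdot)\F(x_2,\cdot))|=O(l^{-(1+\epsilon_0/4)})$ from Lemma \ref{convv}, which is uniform in $x_1,x_2$. Given $\eta>0$, this allows me to pick $L=L(\eta)$ so that $\sum_{l>L}|I_{\mu}(\F(w_s(x),\cdot)\F(w_s(x),T^l\cdot))|<\eta/4$ for every $s\geq 0$, so the tail contribution to $g_i(s)-g_i(s')$ is at most $\eta/2$. For the initial segment $\sum_{l=1}^{L}$, I apply the same difference decomposition as above term by term, bounding $\|\F(w_s(x),\cdot)-\F(w_{s'}(x),\cdot)\|_{\infty}$ by the uniform Lipschitz constant times $|s-s'|$, and using $T$-invariance of $\mu$ to bound the remaining factor $I_{\mu}(|\F(w_s(x),T^l\cdot)|)=I_{\mu}(|\F(w_s(x),\cdot)|)\leq C_0$ uniformly in $l$. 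This produces $|g_i(s)-g_i(s')|\leq 2LC_1|s-s'|+\eta/2$, which is less than $\eta$ as soon as $|s-s'|<\eta/(4LC_1)$; hence $g_i$ is uniformly continuous, and also bounded since $\sum_{l}l^{-(1+\epsilon_0/4)}<\infty$.

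The only mildly delicate step is this tail-versus-Lipschitz trade-off for the $g_i$: the term-by-term Lipschitz bound is uniform in $l$ but its sum over $l$ diverges, while the size bound is summable but not by itself continuous in $s$. Combining the two via truncation at the scale $L(\eta)$ is the crux, and it goes through immediately thanks to the spatial uniformity of the decay rate furnished by Lemma \ref{convv}.
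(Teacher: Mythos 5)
Your proof is correct and follows essentially the same strategy as the paper: reduce to uniform continuity of a one-variable function of $s\mapsto w_s(x)$ via the Lipschitz property of $w$, then combine the uniform-in-$x$ decay $O(l^{-(1+\epsilon_0/4)})$ from Lemma \ref{convv} for the tail with the Lipschitz regularity of $F$ in its first coordinate for the finitely many leading terms. Your write-up is somewhat more detailed (treating the $J_1$ and $J_2$ factors separately and spelling out the product-of-UC-functions reduction), but the crux — the tail-truncation versus Lipschitz-on-head trade-off — is identical.
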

\begin{proof}
Since the map $w$ is Lipschitz, it is enough to prove that $x \in \mathbb{R}^d \mapsto  \sum_{l=1}^\infty I_{\mu}\left(\F(x,.)\F(x,T^{l}(.))\right)$ is uniformly continuous. This is a consequence of the uniform convergence of $I_{\mu}\left(\F(x,.)\F(x,T^{l}(.)\right)$ in $O(l^{-(1+\e/4)})$ in Lemma \ref{convv} and the Lipschitz regularity of $\F(.,.)$ :

For any compact $K$, any $N\in \N^*$and any $x,y \in K$,
\begin{align*}
\left|\sum_{l=1}^\infty I_{\mu}\left(\F(x,.)\F(x,T^{l}(.))\right)-\sum_{l=1}^\infty I_{\mu}\left(\F(y,.)\F(y,T^{l}(.))\right)\right|\leq C_1 \sum_{|l|\geq N}|l|^{-1+\frac{\epsilon_0}4} + CN\|x-y\|.
\end{align*}
The latter inequation depends only on $\|x-y\|$ thus we deduce the uniform continuity
\end{proof}

Notice that for any $u,v \geq \epsilon$, $|\frac 1 u- \frac 1 v|=\left|\frac{u-v}{uv}\right|\leq \left(\frac 1 \epsilon\right)^{2} |u-v|$, thus the following Riemann sums converge :
\begin{align}
&\lim_{\epsilon \rightarrow 0}\epsilon^{K/2}\sum_{\textbf{l}\in (\N^*)^K,\ls_{K}\leq \lfloor t/\epsilon \rfloor}\left(\prod_{i=1}^K\frac 1 {l_i^{1/2}}\right)\nonumber\\
&\left(\prod_{i \in J_2}\sum_{l=1}^\infty I_{\mu}\left(\F(w_{\epsilon\ls_i}(x),\cdot)\F(w_{\epsilon\ls_i}(x),T^{l}(\cdot)\right)\right) \left(\prod_{i \in J_1}I_{\mu}(\F(w_{\epsilon\ls_i}(x),\cdot)^2)\right)\nonumber\\
&=\lim_{\epsilon \rightarrow \infty}\epsilon^{K}\sum_{\textbf{l},\ls_{K}\leq \lfloor t/\epsilon \rfloor}\int_{l_1-1<u_1\leq l_1}\dots \int_{l_K-1<u_K\leq l_K}\left(\prod_{i=1}^K\frac 1 {(\lceil u_i\rceil \epsilon)^{1/2}}\right)G(\epsilon \lceil u_1\rceil,\dots,\epsilon \lceil u_K\rceil)d\textbf{u}\nonumber\\
&=\lim_{\epsilon \rightarrow \infty}\int_{0<u_1,\dots,u_K\leq \lfloor t/\epsilon\rfloor, \sum_i \lceil u_i/\epsilon\rceil \leq \lfloor t/\epsilon\rfloor}\left(\prod_{i=1}^K\frac 1 {(\lceil u_i/\epsilon\rceil \epsilon)^{1/2}}\right)G(\epsilon \lceil u_1/\epsilon\rceil,\dots,\epsilon \lceil u_K/\epsilon \rceil)d\textbf{u} \label{adomine}\\
&=\int_{0< u_1,\dots,u_K, \sum_i u_i \leq t}\left(\prod_{i=1}^K\frac 1 {u_i^{1/2}}\right)G(u_1,\dots,u_n)du_1,\dots,du_K. \label{integ}
\end{align}
The last line is obtained through dominated convergence of the term \eqref{adomine} which is upper bounded by
%Le passage vers la dernière intégrale se fait par convergence dominée. En effet le terme de \eqref{adomine} est majoré par 
$
C\left(\prod_{i=1}^K\frac {1_{\{0<u_i\leq t+1\}}}  { u_i^{1/2}}\right),
$
where $C>0$ comes from the boundedness of $G(\cdot)$ as a continuous function on a compact.
%Le reste étant en \frac{1}{n^{K/2}\sum_{l_i}\prod l_i^{-1/2} o(1)

%g_{k_n}(w_{n^{-1} (\ls_i+m)}(x),\pt^{l_i}(\cdot),a_i)^{N_i}1_{S_{l_i}=a_i-a_{i-1}}
%To summarize, we can notice that the vector $\Ng$ determines the couple $(\Ng,\textbf e)$ as well as the sets $J_1(\Ng)$ and $J_2(\Ng)$.
We denote by $\mathcal C (N,q)$ the set of \textbf{admissible} couple $(\Ng,\textbf e)$ of length $q$ such that $\underline{\Ng}_q=N$.
When $N$ is odd, we notice that $\mathcal C(N,q)=\emptyset$ thus
$$
\lim_{\epsilon \rightarrow 0}E_{\mup}\left( v_t^\epsilon(x,\cdot)^N \right)=0.
$$
We consider now the case where the integer $N$ is even,
\begin{align}\label{obscur2}
\lim_{\epsilon \rightarrow 0}&E_{\mup}\left( v_t^\epsilon(x,\cdot)^N \right)=\lim_{\epsilon \rightarrow 0} \epsilon^{N/4}\sum_{q=1}^N \sum_{N_i\geq 1, \sum_{i=1}^q N_i=N}C_{\Ng}A_{\epsilon,q,\Ng}(\F)\nonumber\\
&=\lim_{\epsilon \rightarrow 0} \epsilon^{N/4}\sum_{q=1}^N \sum_{(\textbf{e},\Ng)\in \mathcal C (N,q)}C_{\Ng}\sum_{l_1,\dots,l_q,\ls_q\leq \lfloor t/\epsilon \rfloor}b_{a_0,0,\epsilon,\textbf{l},\Ng}^{\textbf{e}}(1),
\end{align}
and so
\begin{align}
&\lim_{\epsilon \rightarrow 0}E_{\mup}\left( v_t^\epsilon(x,\cdot)^N \right)=\sum_{q=1}^N\sum_{(\Ng,\textbf e)\in \mathcal C (N,q)}C_{\Ng} \frac{1}{(2\pi \Sigma)^{N/4}}\int_{0\leq u_1,\dots,u_K\leq 1, \sum_i u_i \leq 1}\left(\prod_{i=1}^K\frac 1 {u_i^{1/2}}\right)\nonumber\\
& \left(\prod_{i \in J_2(\Ng)}\sum_{l=1}^\infty I_{\mu}\left(\F(w_{\sum_{k=1}^i u_k}(x),\cdot)\F(w_{\sum_{k=1}^i u_k}(x),T^{l}(\cdot)\right)\right) \left(\prod_{i \in J_1(\Ng)}I_{\mu}(\F(w_{\sum_{k=1}^iu_k}(x),\cdot)^2)\right)d\textbf{u}.\nonumber
\end{align}
In that case $q$, $\Ng \in \mathcal C (q)$, $C_{\Ng}$ et $J_1(\Ng)$
Notice that given $N$, the set $J_2(\Ng)$ determines entirely the quantities $q$, $(\Ng,\textbf e)\in \mathcal C(N,q)$ and $J_1(\Ng)$ :\\
$N/2=|J_1|+|J_2|$, $ q=|J_1|+2|J_2|$ and $\psi : (\Ng,\textbf e) \in \mathcal C (N,q) \mapsto  J_2(\Ng)$ is injective.\\
Notice also that by recurrence on the cardinality of $J_2(\Ng)$, $C_{\textbf N}=2^{|J_2|}\frac{N!}{2^{N/2}}$.
\begin{align*}
&\lim_{\epsilon \rightarrow 0}E_{\mup}\left( \tilde v_t^\epsilon(x,\cdot)^N \right)=\sum_{J_2 \in \Pa([|1;N/2|])}\frac{(2\pi \Sigma)^{-N/4}N!}{2^{N/2}(N/2)!}2^{|J_2|}(N/2)!\int_{0\leq u_1,\dots,u_{N/2}, \sum_i u_i \leq t}\left(\prod_{i=1}^{N/2}\frac 1 {u_i^{1/2}}\right)\\
& \left(\prod_{i \in J_2(\Ng)}\sum_{l=1}^\infty I_{\mu}\left(\F(w_{\sum_{k=1}^i u_k}(x),\cdot)\F(w_{\sum_{k=1}^i u_k}(x),T^{l}(\cdot)\right)\right)\left(\prod_{i \in J_1(\Ng)}I_{\mu}(\F(w_{\sum_{k=1}^iu_k}(x),\cdot)^2)\right)d\textbf{u}\\
\end{align*}
\begin{align*}
&=\frac{(2\pi \Sigma)^{-N/4}N!}{2^{N/2}(N/2)!}\sum_{J_2(\Ng) \in \Pa([|1;N/2|])}(N/2)!\int_{0\leq u_1,\dots,u_{N/2}, \sum_i u_i \leq t}\left(\prod_{i=1}^{N/2}\frac 1 {u_i^{1/2}}\right)\\
& \left(\prod_{i \in J_2(\Ng)}2\sum_{l=1}^\infty I_{\mu}\left(\F(w_{\sum_{k=1}^i u_k}(x),\cdot)\F(w_{\sum_{k=1}^i u_k}(x),T^{l}(\cdot)\right)\right) \left(\prod_{i \in J_1(\Ng)}I_{\mu}(\F(w_{\sum_{k=1}^iu_k}(x),\cdot)^2)\right)d\textbf{u}\\
&=\frac{N!}{2^{N/2}(N/2)!} (2\pi \Sigma)^{-N/4}(N/2)!\\
&\int_{0\leq u_1,\dots,u_{N/2}, \sum_i u_i \leq t}\left(\prod_{i=1}^{N/2}\frac 1 {u_i^{1/2}}\right)\sum_{l \in \Z} I_{\mu}\left(\F(w_{\sum_{k=1}^i u_k}(x),\cdot)\F(w_{\sum_{k=1}^i u_k}(x),T^{|l|}(\cdot)\right)d\textbf{u}.\\
&=\frac{N!}{2^{N/2}(N/2)!} (2\pi \Sigma)^{-N/4}(N/2)!\\
&\int_{\textbf{u}\in \mathbb R_+^{\frac N 2}, \sum_i u_i \leq t}\left(\prod_{i=1}^{N/2}\frac 1 {u_i^{1/2}}\right)\sum_{l \in \Z} I_{\mu}\left(\F(w_{\sum_{k=1}^i u_k}(x),\cdot)\F(w_{\sum_{k=1}^i u_k}(x),T^{|l|}(\cdot)\right)d\textbf{u}  
\end{align*}

The term in $\frac{N!}{2^{N/2}(N/2)!}$ match with the moment of order $N/2$ of a standard normal law (which is the $\frac N 2$th moment of $B_1$ for $(B_t)_{t\geq 0}$ Brownian motion of law of variance $1$). The second factor can be identified using \cite[ proposition B.1.17]{MPphd}  with the moment of order $N$ of the variable $\left(\int_0^t a(w_s(x))dL_s'(0)\right)^{1/2}$ 
%qui est une variable bornée par $\|a\|_{\infty}t$,
  with $a(x)$ given by~\eqref{formulea}
  %$:=\sum_{k \in \Z}I_{\mu}\left( \F(x,T^{|k|}\cdot)\F(x,\cdot) \right)$ 
  and $(L_s')_{s\geq 0}$ a local time process $(L_s'(0))_{t\geq 0}$ of a Brownian motion  $(B'_t)_{t\geq 0}$ of variance $\Sigma$. The limit law is thus the law of a Gaussian vector conditionally to the local time process $(L_s'(0))_{t\geq 0}$ (of a Brownian motion  $(B'_t)_{t\geq 0}$ of variance $\Sigma$ )\footnote{with variance $\left(\int_0^t \langle u,a(w_s(x))u\rangle dL_s'(0)\right)^{1/2}$ when treating the dimension $d>1$}. We now check that this limit law corresponds to the law of $\int_0^t \sqrt{a(w_s(x))}dB_{L_s'(0)}$ from the conclusion of the theorem \ref{thmbirkhoff} we want to prove\footnote{when $d>1$, we check in the same way that conditionally to the process $(L_s'(0))_{s\geq 0}$, the marginals of $\langle u,\int_0^t \sqrt{a(w_s(x))}dB_{L_s'(0)}\rangle$ are Gaussian and of variance $\int_0^t \langle u,a(w_s(x))u\rangle dL_s'(0)$.}.
   To do so we only need to prove that $\int_0^t \sqrt{a(w_s(x))}dB_{L_s'(0)}$ is a Gaussian vector conditionally to  $(L_s(0))_{t\geq 0}$ and compute its variance.
We fix $(L_s'(0))_{s\geq 0}:=(h(s))_{s\geq 0}$, since $B$ is independent of that process, its law is still driven by the law of the process $(B_{h(s)})_{s\geq 0}$ which is then a martingale with quadratic variation $\langle B_{h(s)}\rangle=h(s)$. Thus Dubins-Schwarz relation \cite{DuSC65} ensures that the law of the martingale  $\left(\int_0^t \sqrt{a(w_s(x))}dB_{L_s'(0)}\right)_{t\geq 0}$ matches with the law of  the process  $\left(\beta_{\int_0^t a(w_s)df(s)}\right)_{t\geq 0}$ with $\beta$ a standard Brownian motion. In particular, the law of the random variable $\int_0^t \sqrt{a(w_s(x))}dB_{L_s'(0)}$ corresponds to the Gaussian law of $\beta_{\int_0^t a(w_s)df(s)}$ whose variance is $\left(\int_0^t a(w_s)df(s)\right)^{1/2}$. Thus we deduce the convergence of moments, for any $N\in \N$ :
\begin{align*}
\lim_{\epsilon \rightarrow 0}E_{\mup}\left( v_t^\epsilon(x,\cdot)^N \right)=E\left(\left(\mathcal N\left(\int_0^t a(w_s(x))dL_s(0)\right)^{1/2}\right)^N\right).
\end{align*}
%\marginnotemx{\\
%reprendre \\ traduction ici}

We deduce from the convergence above of moments of  $v_t^\epsilon(x,\cdot)$ to those of 
$\mathcal N\left(\int_0^t a(w_s(x))dL_s(0)\right)^{1/2}$ the corresponding convergence in law using Carleman's criterion (see \cite{carlman}) whose assumptions are checked in the following Lemma \ref{lemcarlmantplc}.
\end{proof}

\begin{lem}\label{lemcarlmantplc}
For any $t \in [0,S]$, the sequence $(M_N)_{N\in \N}$ of moments of $\int_0^t \sqrt{a(w_s(x))}dB_{L_s'(0)}$  defined by $M_N:=E\left(\left(\int_0^t \sqrt{a(w_s(x))}dB_{L_s'(0)}\right)^N\right)$ satisfies Carleman's criterion, i.e $\sum_{N\geq 0}M_{2N}^{-1/(2N)}$ diverges (see \cite{carlman}).
%then any sequence whose moments converge to $(M_N)$ would converge in law to $B_{\int_0^t a(w_s(x))dL_s'(0)}$.
\end{lem}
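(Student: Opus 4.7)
The plan is to exploit the conditional Gaussian structure of the limit random variable. Conditionally on the local time process $(L_s'(0))_{s \geq 0}$, the stochastic integral $Z := \int_0^t \sqrt{a(w_s(x))}\, dB_{L_s'(0)}$ is a centered Gaussian (for $d=1$; in higher dimensions one reduces to this case by looking at the moments of $\langle u, Z\rangle$ for $u \in \mathbb R^d$) whose conditional variance is
\[
V := \int_0^t a(w_s(x))\, dL_s'(0).
\]
Therefore the standard Gaussian moment formula gives
\[
M_{2N} = E(Z^{2N}) = \frac{(2N)!}{2^N N!}\, E(V^N).
\]
My first step is to record this identity, which reduces the problem to controlling the moments of the positive random variable $V$.

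Next I would bound $V$ pathwise. Since $s\mapsto w_s(x)$ is Lipschitz on $[0,t]$ (as the solution of \eqref{eqomega} with bounded $\overline F$) and since $a$ is continuous on $\mathbb R^d$ (it is a sum of $L^1$-uniformly convergent correlations that depend continuously on $x$ thanks to the Lipschitz assumption on $F(\cdot,\omega)$), the quantity $C_a := \sup_{s\in[0,t]} \|a(w_s(x))\|$ is finite. Consequently,
\[
V \le C_a \, L_t'(0),
\qquad\text{so}\qquad
E(V^N) \le C_a^{\,N}\, E\bigl((L_t'(0))^N\bigr).
\]
By Lévy's identity, the local time at $0$ up to time $t$ of the variance-$\Sigma$ Brownian motion $B'$ has the same law as $|B'_t|$, which is a centered Gaussian (absolute value) of variance $\Sigma t$. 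Hence $E((L_t'(0))^N) = E(|B'_t|^N) \le C_1^{\,N}\, (\Sigma t)^{N/2}\, N^{N/2}$ for some universal $C_1$, using the elementary bound $E(|\mathcal N(0,1)|^N) = O(N^{N/2})$.

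Combining these with Stirling's formula $\frac{(2N)!}{2^N N!} \le C_2^{\,N}\, N^{N}$, I obtain
\[
M_{2N} \;\le\; C_3^{\,N}\, N^{N}\, N^{N/2} \;=\; C_3^{\,N}\, N^{3N/2},
\]
which upon taking $(2N)$-th roots yields $M_{2N}^{1/(2N)} \le C_4\, N^{3/4}$ for some $C_4>0$ and all $N\ge 1$. Therefore $M_{2N}^{-1/(2N)} \ge C_4^{-1}\, N^{-3/4}$ and
\[
\sum_{N\ge 1} M_{2N}^{-1/(2N)} \;\ge\; C_4^{-1}\sum_{N\ge 1} N^{-3/4} \;=\; +\infty,
\]
which is Carleman's criterion. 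There is no real obstacle here; the only delicate point is to ensure the boundedness of $a\circ w_\cdot$ on $[0,t]$, but this follows from the assumed regularity of $F$ (which forces $a$ to be continuous via the uniform estimate of Lemma \ref{convv}) and the continuity of $s\mapsto w_s(x)$.
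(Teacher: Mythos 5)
Your proof is correct and follows essentially the same route as the paper's: condition on the local time process to obtain $M_{2N}=\frac{(2N)!}{2^N N!}\,E(V^N)$ with $V=\int_0^t a(w_s(x))\,dL_s'(0)$, bound $V\le C_a L_t'(0)$ using continuity of $a\circ w_\cdot$, and combine Stirling's formula with a moment bound for $L_t'(0)$. The only difference is a matter of presentation: the paper invokes a technical computation from \cite{MPphd} (Corollaire 1.5.5) to control $E((L_T'(0))^N)$, whereas you make this self-contained via Lévy's identity $L_t'(0)\overset{d}{=}|B'_t|$ and the explicit Gaussian moment growth $E(|\mathcal N(0,1)|^N)=O(N^{N/2})$, which also yields the slightly sharper estimate $M_{2N}^{1/(2N)}=O(N^{3/4})$ rather than the paper's $O(N)$; both of course give divergence of $\sum M_{2N}^{-1/(2N)}$.
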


\begin{proof}
We deduce the moments of order $2N$ from the independence between the process $(L_s)_{s\geq 0}$ and the gaussian law $\mathcal N$ :
\begin{align}
E\left(\left(\mathcal N\left(\int_0^t a(w_s(x))dL_s(0)\right)^{1/2}\right)^N\right)&=E\left(\mathcal N^{2N}\right)E\left(\left(\int_0^t a(w_s(x))dL_s(0)\right)^N\right)\nonumber\\
&\leq \frac{(2N)!}{2^{N}N!}\|a(\cdot)\|_{\infty,w_{[0,T]}(x)}E(L_T^N).\label{carlamn2}
\end{align}
Stirling formula states that  $(2N)!=O\left(\frac{(2N)^{2N+1}}{e^{2N}}\right)$ and thus $\left((2N)!\right)^{\frac{1}{2N}}=O(N)$ whereas the last term \eqref{carlamn2} is in $O(N)$ according to the technical computation in \cite[Corollaire 1.5.5]{MPphd}.
thus $M_N^{\frac{1}{2N}}$ is in order $O(N)$ and thus its inverse is the general term of a divergent series.
\end{proof}

\section{Proof of theorem \ref{thmbirkhoff} : finite dimensional distributions.}\label{sectfinidimbirk}

We now focus on proving the following lemma consisting on the convergence of the finite dimensional distribution (f.d.d) in the conclusion of theorem \ref{thmbirkhoff}. 
\begin{lem}
For any $x\in \mathbb R$, the process $(v_t^\epsilon(x,\cdot))_{t \in [0,T]}$ converges in law for the f.d.d (with respect to measure $\mup$) to the process $\left(\int_0^t \sqrt{a(w_s(x))}dB_{L_s'(0)}\right)_{t\in [0,S]}$,
%v_t^{\epsilon}(x,\cdot)$ converge fortement en loi vers $\mathcal $\\
\end{lem}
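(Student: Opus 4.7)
The plan is to extend the moment method of Proposition~\ref{propbirkhoff1} to joint moments of the process $(v_t^\epsilon(x,\cdot))_{t\in[0,T]}$. By the Cram\'er--Wold device, it suffices to prove that for every $0 \le t_1 < \dots < t_k \le T$ and every real $\lambda_1,\dots,\lambda_k$, the variable $Z_\epsilon := \sum_{j=1}^k \lambda_j v_{t_j}^\epsilon(x,\cdot)$ converges in law (w.r.t.\ $\mup$) to $Z := \sum_{j=1}^k \lambda_j \int_0^{t_j}\sqrt{a(w_s(x))}\,dB_{L_s'(0)}$. Setting $\mu_j := \sum_{i\ge j}\lambda_i$ and $t_0:=0$, a telescoping rewrites $Z_\epsilon$ as a weighted Birkhoff sum
\begin{equation*}
Z_\epsilon = \epsilon^{1/4}\sum_{n=1}^{\lfloor t_k/\epsilon\rfloor}\mu_{j_\epsilon(n)}F(w_{\epsilon n}(x), T^n\omega),
\end{equation*}
where $j_\epsilon(n)$ is the unique index with $\lfloor t_{j_\epsilon(n)-1}/\epsilon\rfloor < n \le \lfloor t_{j_\epsilon(n)}/\epsilon\rfloor$. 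This is exactly the sum treated in Proposition~\ref{propbirkhoff1}, except that the observable carries an $n$-dependent, $\omega$-independent scalar weight $\mu_{j_\epsilon(n)}$ that is piecewise constant on intervals of macroscopic length.

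I would then rerun the moment computation of Proposition~\ref{propbirkhoff1} on $E_{\mup}(Z_\epsilon^N)$ step by step. The approximation, spectral and operator estimates (Lemmas~\ref{aproxpro} and~\ref{anisop}, Proposition~\ref{spectre}) act only on the $\omega$-variable and apply verbatim; the combinatorial expansion and classification of admissible configurations $(\Ng,\textbf{e})$ (those obeying \eqref{r1} or \eqref{r2}) are unchanged. The only new feature is that each admissible pair now produces a multiplicative weight $\mu_{j_\epsilon(\ls_{\alpha_i})}\mu_{j_\epsilon(\ls_{\alpha_i+1})}$, or $\mu_{j_\epsilon(\ls_{\alpha_i})}^2$ for the doubles arising in \eqref{r2}. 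The crucial observation is that for a pair in case \eqref{r1}, the gap $\ls_{\alpha_i+1}-\ls_{\alpha_i}=l_{\alpha_i+1}$ is effectively of order $O(1)$, thanks to the polynomial decay $O(l^{-(1+\e/4)})$ from Lemma~\ref{convv}, while $\ls_{\alpha_i}$ lies at macroscopic scale $\Theta(1/\epsilon)$. Hence $\epsilon\ls_{\alpha_i}$ and $\epsilon\ls_{\alpha_i+1}$ belong to the same sub-interval $(t_{j-1},t_j]$ apart from a negligible family of configurations straddling a boundary $t_j/\epsilon$, and on the surviving configurations the weight collapses to $\mu_j^2$.

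Passing to the continuum limit via dominated convergence on the Riemann sum, one obtains, for $N$ even,
\begin{equation*}
\lim_{\epsilon\to 0}E_{\mup}(Z_\epsilon^N)=\frac{N!}{2^{N/2}(N/2)!}(2\pi\Sigma)^{-N/4}(N/2)!\int_{\textbf{u}\in\mathbb R_+^{N/2},\,\sum u_i\le t_k}\prod_{i=1}^{N/2}\frac{\mu_{j(\sum_{k\le i}u_k)}^2\, a(w_{\sum_{k\le i}u_k}(x))}{u_i^{1/2}}\,d\textbf{u},
\end{equation*}
and $0$ for $N$ odd, where $j(u):=\min\{j:u\le t_j\}$. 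Conditionally on $(L_s'(0))_{s\ge 0}$, the target random variable $Z$ is a centered Gaussian with variance $V:=\int_0^{t_k}\mu_{j(s)}^2\, a(w_s(x))\,dL_s'(0)$, whose $N$-th moment equals $\frac{N!}{2^{N/2}(N/2)!}E(V^{N/2})$; expanding $V^{N/2}$ via \cite[Prop.~B.1.17]{MPphd} exactly as in the proof of Proposition~\ref{propbirkhoff1} identifies this expectation with the right-hand side above. Carleman's criterion (Lemma~\ref{lemcarlmantplc}, with $a$ rescaled by $\max_j|\mu_j|^2$) then closes the argument, and Cram\'er--Wold finally yields the convergence of finite-dimensional distributions. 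The main technical obstacle I anticipate is the control of boundary-straddling pairs in step two: showing that their contribution is negligible requires combining the absolute summability of the correlation sum with the Lipschitz-in-$u$ substitution $w_{\epsilon\ls_{\alpha_i+1}}\leftrightarrow w_{\epsilon\ls_{\alpha_i}}$ already used in Proposition~\ref{propbirkhoff1}, adapted to the piecewise-constant weight $\mu_{j_\epsilon}$.
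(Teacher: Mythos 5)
Your proposal is correct and follows essentially the same route as the paper: both reduce the finite-dimensional convergence to a moment computation over admissible couples $(\textbf{N},\textbf{e})$ exactly as in Proposition~\ref{propbirkhoff1}, both invoke Lemma~\ref{lemcarlmantplc} for Carleman's criterion, and both handle the boundary-straddling configurations (the paper's residual sets $A_j$ in Lemma~\ref{lemmomentvt}) via the same correlation-decay argument from Lemma~\ref{convv}. The only cosmetic difference is one of parameterization --- the paper computes mixed moments of the increment vector $(\tilde v_{t_i}^\epsilon-\tilde v_{t_{i-1}}^\epsilon)_i$ and shows the admissible index sets factorize into $\mathcal C (M_1,q_1)\times \dots \times \mathcal C (M_p,q_p)$, while you apply Cram\'er--Wold and compute a scalar moment with piecewise-constant weights $\mu_{j_\epsilon(n)}$ tracking sub-interval membership; one small step you should add, as the paper does at the start of its proof, is the reduction from $v_t^\epsilon$ to the discrete sum $\tilde v_t^\epsilon$, since your weighted telescoped expression already tacitly uses the discrete form.
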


\begin{proof}
Let $x \in \mathbb{R}$,
It is enough to prove the convergence of $(\tilde v_t^\epsilon(x,\cdot))_{t\in [0,S]}$. Indeed since $\F$ is uniformly Lipschitz and bounded in its first coordinate and $(w_s(x))_s$ is a Lipschitz map,
\begin{align*}
&\sup_{t \in [0,T]}\|v_t^\epsilon(x,\cdot)-\tilde v_t^\epsilon(x,\cdot)\|_{L^1_{\mup}}\\
&=\epsilon^{1/4}\sup_{t \in [0,T]}\left\|\sum_{k=1}^{\lfloor t/\epsilon \rfloor} \int_{s=k-1}^{k}f(w_{\epsilon k}(x),T^{k}\cdot)-f(w_{\epsilon s}(x),T^{\lfloor s \rfloor}\cdot)ds +\int_{\lfloor t/\epsilon \rfloor}^{t/\epsilon}f(w_{\epsilon s}(x),T^{\lfloor s \rfloor}\omega)ds\right\|_{L^1_{\mup}}\\
&\leq \epsilon^{1/4}(TC_f+\|f\|_{\infty}).
\end{align*}
Thus $(v_t^\epsilon(x,\cdot))_{t\in [0,S]}$ converges in law iff $(\tilde v_t^\epsilon(x,\cdot))_{t\in [0,S]}$ converges.
Let $0=t_0<t_1<\dots<t_p \in [0,S]$, by linear combinations, it is enough to prove that $\left(\epsilon^{1/4}\tilde{v}^{\epsilon}_{t_i}(x,\cdot)-\epsilon^{1/4}\tilde{v}^{\epsilon}_{t_{i-1}}(x,\cdot)\right)_{i=1}^p$ converges to $\left(\int_{t_{i-1}}^{t_i} \sqrt{a(w_s(x))}dB^{(i)}_{L_s'(0)}\right)_{i=1,\dots,p}\overset{\mathcal{L}}{=}\left(\int_{0}^{t_i} \sqrt{a(w_s(x))}dB_{L'_s(0)}-\int_{0}^{t_{i-1}} \sqrt{a(w_s(x))}dB_{L'_s(0)}\right)_{i=1,\dots,p}$
where the random processes $B^{(i)}$ are mutually independent standard motions independent from $\left(B_t'\right)_{t \in [0,T]}$.\\
Carleman criterion applies to these random variable (see Lemma \ref{lemcarlmantplc} page \pageref{lemcarlmantplc}) thus we only need to check the convergence of the moments of $\left(\epsilon^{1/4}\tilde{v}^{\epsilon}_{t_i}(x,\cdot)-\epsilon^{1/4}\tilde{v}^{\epsilon}_{t_{i-1}}(x,\cdot)\right)_{i=1}^p$ through the following Lemma \ref{lemmomentvt}.

\end{proof}
\begin{lem}\label{lemmomentvt}
For any\footnote{When $d>1$  the convergence of moment it is enough to prove the limit in Lemma \ref{lemmomentvt} identity for
$\lim_{\epsilon \rightarrow 0} E_{\mup}\left( \prod_{i=1}^p \langle \theta_i, \tilde{v}^{\epsilon}_{t_i}(x,\cdot)-\tilde{v}^{\epsilon}_{t_{i-1}}(x,\cdot)\rangle^{M_i}\right)=E\left( \prod_{i=1}^p \langle \theta_i,\int_{t_{i-1}}^{t_i} \sqrt{a(w_s(x))}dB^{(i)}_{L_s'(0)}\rangle^{M_i}\right)$
for any $\theta_i \in \mathbb R^d$} $M_1,\dots,M_p \geq 0$,
$$
\lim_{\epsilon \rightarrow 0} E_{\mup}\left( \prod_{i=1}^p \left(\tilde{v}^{\epsilon}_{t_i}(x,\cdot)-\tilde{v}^{\epsilon}_{t_{i-1}}(x,\cdot) \right)^{M_i}\right)=E\left( \prod_{i=1}^p \left(\int_{t_{i-1}}^{t_i} \sqrt{a(w_s(x))}dB^{(i)}_{L_s'(0)}\right)^{M_i}\right).
$$
\end{lem}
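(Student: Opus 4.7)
The plan is to run the moment computation of Proposition~\ref{propbirkhoff1} in a block-refined version. First expand each increment as
\[
\tilde v^\epsilon_{t_i}(x,\cdot) - \tilde v^\epsilon_{t_{i-1}}(x,\cdot) = \epsilon^{1/4}\sum_{k\in I^\epsilon_i} F(w_{\epsilon k}(x), T^k\cdot),\qquad I^\epsilon_i := (\lfloor t_{i-1}/\epsilon\rfloor, \lfloor t_i/\epsilon\rfloor],
\]
expand the product $\prod_i(\cdots)^{M_i}$, and regroup equal values. The joint moment then becomes the same kind of multi-sum as the one handled in Proposition~\ref{propbirkhoff1}, but indexed in addition by an assignment $i(\cdot)\in\{1,\dots,p\}$ of each distinct index $n_j$ to the block $I^\epsilon_{i(j)}$ it belongs to, and the combinatorial coefficient $C_{\mathbf N}$ refines into a product $\prod_i C_{\mathbf N^{(i)}}$ over blocks (where $\mathbf N^{(i)}$ collects the multiplicities of the groups assigned to block $i$ and $M_i=\sum_{j:i(j)=i}N_j$).

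I would then repeat verbatim the entire chain of estimates of Proposition~\ref{propbirkhoff1}: approximation by the Young-tower observables $g_{k_\epsilon}$ through Lemma~\ref{aproxpro}, spectral-gap splitting of the operators $H_{k_\epsilon,l,a}$ through Proposition~\ref{spectre}, and the classification of non-negligible configurations as admissible, namely singletons $\alpha$ with $N_\alpha=2$ (forming $J_1$) and adjacent pairs $\alpha,\alpha+1$ with $N_\alpha=N_{\alpha+1}=1$ (forming $J_2$). The only genuinely new step is to discard the $J_2$-pairs whose two indices $\alpha,\alpha+1$ lie in two different blocks $I^\epsilon_{i_0},I^\epsilon_{i_0+1}$: since the gap $l_{\alpha+1}$ between them is summable and hence of order $O(1)$, such a configuration forces the position of $\alpha$ into an $O(1)$-window at the right endpoint of $I^\epsilon_{i_0}$, which has relative weight of order $\epsilon$ inside the Riemann-type sum $\epsilon\sum_{\ldots}$ that produced the surviving integrals in~\eqref{integ}.

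Once these boundary-crossing configurations are discarded, every admissible configuration has each $J_1$-singleton and each $J_2$-pair contained in a single block, and the Riemann convergence of the $\alpha$-gaps happens independently inside each interval $[t_{i-1},t_i]$. Combining this factorization with the block analogue $\prod_i C_{\mathbf N^{(i)}} = \prod_i 2^{|J_2^{(i)}|}M_i!/2^{M_i/2}$ of the identity $C_{\mathbf N} = 2^{|J_2|}N!/2^{N/2}$ used in Proposition~\ref{propbirkhoff1}, and carrying the computation through as in \eqref{obscur2}--\eqref{integ}, one obtains
\[
\lim_{\epsilon\to 0}E_{\mup}\!\left(\prod_{i=1}^p(\tilde v^\epsilon_{t_i}(x,\cdot) - \tilde v^\epsilon_{t_{i-1}}(x,\cdot))^{M_i}\right) = \prod_{i=1}^p\frac{M_i!}{2^{M_i/2}(M_i/2)!}\, E\!\left[\prod_{i=1}^p\left(\int_{t_{i-1}}^{t_i} a(w_s(x))\, dL'_s(0)\right)^{M_i/2}\right],
\]
with the convention that the whole limit vanishes whenever some $M_i$ is odd. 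By the conditioning-on-$L'$ argument used at the end of Proposition~\ref{propbirkhoff1} together with the conditional independence of the $B^{(i)}$ given $L'$, this is precisely the right-hand side of the lemma.

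The main obstacle, beyond reproducing the heavy bookkeeping of the single-time case, is the verification that block-straddling $J_2$-pairs are indeed negligible and that the refined combinatorial prefactor $\prod_i C_{\mathbf N^{(i)}}$ reconstructs exactly the product of Gaussian moments appearing on the right-hand side. Every other estimate is a direct block-by-block transcription of the arguments already established for Proposition~\ref{propbirkhoff1}.
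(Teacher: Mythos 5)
Your proposal follows the same route as the paper's proof of Lemma~\ref{lemmomentvt}: expand the product of block increments into the multi-sum of Proposition~\ref{propbirkhoff1}, restrict to the admissible configurations of \eqref{r1}--\eqref{r2}, identify the $J_2$-pairs straddling an interior boundary $t_j$ (the paper's set $A_j$) as the only new contribution to discard, and then factorize the surviving configurations block by block, matching the combinatorial prefactor $\prod_i C_{\mathbf N^{(i)}}$ with the product of Gaussian moments and the moments of $\int_{t_{i-1}}^{t_i}a(w_s(x))\,dL'_s(0)$ via the conditioning-on-$L'$ identification. The one place where you are looser than the paper is the negligibility of a straddling $J_2$-pair: the paper splits at the growing cutoff $k_\epsilon$ (if $l_{\qs_j+1}\ge k_\epsilon$ the summable cross-correlation tail vanishes; otherwise $t_j/\epsilon-\ls_{\qs_j}\le k_\epsilon$, pinning the position into a window whose relative Riemann weight $\epsilon k_\epsilon\to 0$), and your phrase ``the gap is summable and hence of order $O(1)$'' compresses this dichotomy in a way that would need the explicit cutoff to be rigorous---but the underlying idea is the correct one.
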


\begin{proof}
The term
$E_{\mup}\left( \prod_{i=1}^p \left(\tilde{v}^{\epsilon}_{t_i}(x,\cdot)-\tilde{v}^{\epsilon}_{t_{i-1}}(x,\cdot) \right)^{M_i}\right)$ can be expanded as follows,
\begin{align}\label{finidib}
E_{\mup}&\left( \prod_{i=1}^p \left(\tilde{v}^{\epsilon}_{t_i}(x,\cdot)-\tilde{v}^{\epsilon}_{t_{i-1}}(x,\cdot) \right)^{M_i}\right)=\nonumber\\
&\sum_{q_1=1}^{M_1}\dots\sum_{q_p=1}^{M_p}\sum_{1 \leq N_1,\dots,N_{\qs_p},M_i=\sum_{j=\qs_{i-1}+1}^{\qs_i}N_j}\left(\prod_{i=1}^pC_{(N_{\qs_{j-1}+1},\dots,N_{\qs_j})}\right)A_{\epsilon,\textbf q,\textbf N};
\end{align}
with $\qs_i:=\sum_{j=1}^iq_j$, as in Proposition \ref{propbirkhoff1} and  
\begin{align*}
A_{\epsilon,\textbf q,\textbf N}(F)&=\sum_{1 \leq n_1 <\dots < n_{q_1} \leq \lfloor  t_1/\epsilon \rfloor}\dots \sum_{\lfloor  t_{p-1}/\epsilon \rfloor < n_{\qs_{p-1}+1} <\dots < n_{\qs_p} \leq \lfloor  t_p/\epsilon \rfloor}\\
&\sum_{\textbf a \in \mathbb{Z}^{\qs_p}}E_{\mup}\left( \prod_{i=1}^{\qs_p} \F(w_{\epsilon n_i}(x),\tp^{n_i}(\cdot),a_i)^{N_i}1_{\{S_{n_i}=a_i\}} \right)\\
&=\sum_{\textbf l \in E_\epsilon^{\textbf q}(t_1,\dots,t_p)}b_{0,0,\epsilon,\textbf{l},\textbf{N}}(1),
\end{align*}
where\footnote{when $d>1$, we replace $F$ in the formula \eqref{finidib} by $\F_i=\langle\theta_j,\F\rangle$ for all $i$ and $j$ such that $1+\qs_{j-1} \leq i \leq \qs_j$.} for any map $f$ satisfying the same assumptions as the map $F$ in Theorem \ref{thmbirkhoff},
\begin{align*}
&b_{a_0,m,\epsilon,\textbf{l},\Ng}(f):=\sum_{\textbf a \in \mathbb{Z}^{\qs_p}}E_{\mup}\left(f\left( \prod_{i=1}^{\qs_p} f(w_{\epsilon^{-1} (\ls_i+m)}(x),\tp^{l_i}(\cdot),a_i)^{N_i}1_{\{\bar S_{l_i}=a_i-a_{i-1}\}}\circ \tp^{k_\epsilon} \right)\circ \tp^{\ls_{i-1}} \right),
\end{align*}
and%\marginnotemx{\\ expliciter $b_{0,0.}$\\ as in formula\\ with $b_{0,0,.}$.\\ supprimé la ligne d'avant}
$$
E_\epsilon^{\textbf q}(t_1,\dots,t_p):=\{\textbf{l}, l_i \geq 1 : \lfloor  t_{j-1}/\epsilon \rfloor < \ls_i \leq \lfloor  t_j/\epsilon \rfloor, \forall i, 1+\qs_{j-1} \leq i \leq \qs_j, \forall j, 1 \leq j \leq p \}.
$$% est décrit par $\textbf{l}'$ est décrit par $l_i'=l_i+\lfloor n t_j \rfloor$ si $q_{j+1}-1 \geq i \geq q_j$
 We denote $N:=\sum_{i=1}^pM_i$. 
According to equation \eqref{obscur}, the \textbf{admissible} couples $(\textbf{N},\textbf{e})$ are among those satisfying relations \eqref{r1} or \eqref{r2} at page \pageref{r1}. So for some fixed $\textbf{e}$, the set $\mathcal C (N,\sum_{i=1}^pq_i)$ (introduced page \pageref{integ}) of \textbf{admissible} couples satisfies the following inclusion, %can be described as\\
$\mathcal C (N,\sum_{i=1}^pq_i)=\mathcal C (M_1,q_1)\times \dots \times \mathcal C (M_p,q_p)\cup \bigcup_{j=2}^{p-1} A_j $ with\\ 
$A_j:=\{(\textbf{N},\textbf{e})\in \mathcal{C}(N,\sum_{i=1}^pq_i),  N_{\qs_j}=N_{1+\qs_{j}}=1, \, e_{\qs_j}=0, \, e_{\qs_{j}+1}=1\}$.
We now prove that for any couple  $(\textbf{N},\textbf{e})\in A_j$, $\epsilon^{N/4}A_{\epsilon,\textbf q,\textbf N}(F)$ is negligible (i.e $A_j$ makes no contribution). Fix $j\in \{2,\dots,p-1\}$ and  $(\textbf{N},\textbf{e})\in A_j$, then according to equation  \eqref{egali} where we introduced for $k\in \{1,\dots,\frac N 2\}$ notation  $\alpha_k$ to represent the indexes such that $e_{\alpha_k}=0$,
\begin{align}
\epsilon^{N/4}\sum_{\textbf l \in E_\epsilon^{\textbf q}(t_1,\dots,t_p)}|b^{\textbf{e}}_{0,0,\epsilon,\textbf{l},\textbf{N}}(1)|&\leq \epsilon^{N/4} \sum_{\textbf l \in E_\epsilon^{\textbf q}(t_1,\dots,t_p)}\left(\prod_{k=1}^{N/2}l_{\alpha_k}^{-1/2}\right)\label{pata1}\\
&\left(\prod_{i \in J_2(\textbf N)}\left| I_{\mu}\left(\F(w_{\epsilon\ls_{i}}(x),\cdot)\F(w_{\epsilon\ls_{i+1}}(x),T^{l_{i+1}}(\cdot)\right)\right|\right)\label{pata2}\\
&\left(\prod_{i \in J_1(\textbf{N})}\left|I_{\mu}\left(\F(w_{\epsilon\ls_i}(x),\cdot)^2\right)\right|\right)\label{primun}
\end{align}
From Lemma \ref{convv} page \pageref{convv} we deduce that the terms \eqref{pata1}, \eqref{pata2} and \eqref{primun} are bounded. %tandis que le terme %\eqref{primde} converge vers $0$ :
Notice that by definition of $A_j$,  $\qs_j \in J_2(\bf N)$ and  $\ls_{\qs_j}\leq  t_j/\epsilon <\ls_{1+\qs_j}$ . Thus, having $e_{1+\qs_j}=1$ means that \\\textbf{either}  $l_{1+\qs_j}\geq k_\epsilon$ and then the marginal sum $\sum_{l_{1+\qs_j}\geq k_\epsilon}\left| I_{\mu}\left(\F(w_{\epsilon\ls_{i}}(x),\cdot)\F(w_{\epsilon\ls_{i+1}}(x),T^{l_{i+1}}(\cdot)\right)\right|$ tends  to $0$ when $\epsilon$ tends to $0$ and so does $\epsilon^{N/4}A_{\epsilon,\textbf q,\textbf N}(\F)$,  \textbf{or}  $t_j/\epsilon-\ls_{\qs_j}\leq  k_\epsilon$ in which case

 the upper estimate of the terms given in formula \eqref{eqriemann} applies to the terms in the marginal sums over the elements  $\textbf{l}\in E_\epsilon^{\textbf q}(t_1,\dots,t_p)$ such that $ t_j/\epsilon -\ls_{\qs_j}\leq k_\epsilon$ in \eqref{pata1}. Then these marginal sums converge as a Riemann approximations to $0$. Thus the term  $\epsilon^{N/4}A_{\epsilon,\textbf q,\textbf N}(F)$ is negligible and the contribution of the couple $(\textbf{N},\textbf{e})\in A_j$  is negligible; the set of \textbf{admissible} couple $\mathcal C (N,\sum_{i=1}^pq_i)$ can be identified with $\mathcal C (M_1,q_1)\times \dots \times \mathcal C (M_p,q_p)$.
In particular, if there is $i\leq p$ such that $M_i$ is odd then \eqref{finidib} tends to $0$ which is consistent with the fact that
$$
E\left( \prod_{i=1}^p \left(\int_{t_{i-1}}^{t_i} \sqrt{a(w_s(x))}dB^{(i)}_{L_s'(0)}\right)^{M_i}\right)=0.
$$
We recall that $\left((B^{(i)}_t)_{t\geq 0}\right)_{i \in \N}$ are mutually independent and independent of $(L'_t)_{t\geq 0}$, with symmetric law.%à accroissements indépendants et de lois symétriques.\\
 We suppose now that $M_1,\dots, M_p$ are all even and choose\\
 $(\textbf{N},\textbf{e}) \in \mathcal C (M_1,q_1)\times \dots \times \mathcal C (M_p,q_p)$. 
 Equation \eqref{integ}  page \pageref{integ} implies,  \\
 
\begin{align*}
&\lim_{\epsilon \rightarrow 0} \epsilon^{N/4}A_{\epsilon,\textbf q,\textbf N}(F)=\lim_{\epsilon \rightarrow 0} \epsilon^{N/4} \left(\prod_{j=1}^p\frac{1}{(2\pi \Sigma)^{M_j/4}}\right)\sum_{\textbf l \in \tilde E_\epsilon^{\textbf q}(t_1,\dots,t_p)}\left(\prod_{i=1}^{N/2}\frac 1 {l_i^{1/2}}\right)\nonumber\\
&\left(\prod_{i \in J_2(\textbf N)}\sum_{l=1}^\infty I_{\mu}\left(\F(w_{\epsilon \ls_i}(x),\cdot)\F(w_{\epsilon\ls_i}(x),T^{l}(\cdot)\right)\right) \left(\prod_{i \in J_1(\textbf N)}I_{\mu}(\F(w_{\epsilon\ls_i}(x),\cdot)^2)\right)\\
&=\left(\prod_{j=1}^p\frac{1}{(2\pi \Sigma)^{M_j/4}}\right)\int_{F_{M_1,\dots,M_p}}\left(\prod_{i=1}^{N/2}\frac 1 {u_i^{1/2}}\right)\nonumber\\
&\left(\prod_{i \in J_2(\textbf N)}\sum_{l=1}^\infty I_{\mu}\left(\F(w_{t_{j-1}+\sum_{k=1}^iu_k}(x),\cdot)\F(w_{t_{j-1}+\sum_{k=1}^iu_k}(x),T^{l}(\cdot)\right)\right)\\
& \left(\prod_{i \in J_1(\textbf N)}I_{\mu}(\F(w_{t_{j-1}+\sum_{k=1}^iu_k}(x),\cdot)^2)\right)d\textbf{u},
\end{align*}
Where  $\tilde E_\epsilon^{\textbf q}(t_1,\dots,t_p):=\{1\leq l_1,\dots,l_{\frac{M_1+\dots+M_p} {2}} : \lfloor  t_{j-1}/\epsilon \rfloor<\ls_k \leq \lfloor  t_j/\epsilon \rfloor, \forall k, \frac{M_1+\dots+M_{j-1}} {2}+1\leq k \leq \frac{M_1+\dots+M_j} {2} \}$ and
$F_{M_1,\dots,M_p}:=$\\
$\{1\leq u_1,\dots,u_{\frac{M_1+\dots+M_p} {2}} :  \lfloor t_{j-1}/\epsilon \rfloor<\underline u_k \leq \lfloor t_j/\epsilon \rfloor, \forall k, \frac{M_1+\dots+M_{j-1}} {2}+1\leq k \leq\frac{M_1+\dots+M_j} {2} \}$.\\
We introduce the notation $\mathcal C' (\textbf{M},q)$ for the non negligible \textbf{admissible} couples, \\
$\mathcal C' (\textbf{M},q):=\mathcal C (M_1,q_1)\times \dots \times C (M_p,q_p) $, then  relation \eqref{finidib} sums up to
\begin{align*}
&\lim_{\epsilon \rightarrow 0}E_{\mup}\left( \prod_{i=1}^p \left(\tilde{v}^{\epsilon}_{t_i}(x,\cdot)-\tilde{v}^{\epsilon}_{t_{i-1}}(x,\cdot) \right)^{M_i}\right)=\\
&\sum_{q_1=1}^{M_1}\dots\sum_{q_p=1}^{M_p}\sum_{(\textbf{N},\textbf{e})\in \mathcal C' (\textbf{M},q)}\left(\prod_{i=1}^pC_{(N_{\qs_{i-1}+1},\dots,N_{\qs_i})}\right)\int_{F_{M_1,\dots,M_p}}\frac{1}{(2\pi \Sigma)^{M_i/2}}\left(\prod_{i=1}^{N/2}\frac 1 {u_i^{1/2}}\right)\nonumber\\
&\left(\prod_{i \in J_2(\textbf N)}\sum_{l=1}^\infty I_{\mu}\left(\F(w_{t_{j-1}+\sum_{k=1}^iu_k}(x),\cdot)\F(w_{t_{j-1}+\sum_{k=1}^iu_k}(x),T^{l}(\cdot)\right)\right)\\
& \left(\prod_{i \in J_1(\textbf N)}I_{\mu}(\F(w_{t_{j-1}+\sum_{k=1}^iu_k}(x),\cdot)^2)\right)d\textbf{u}.
\end{align*}
The same reasoning from formulas \eqref{obscur2} page \pageref{obscur2}, gives still with the even quantity $N:=\sum_{i=1}^pM_i$,
\begin{align}
\lim_{\epsilon \rightarrow 0} E_{\mup}\left( \prod_{i=1}^p \left(\tilde{v}^{\epsilon}_{t_i}(x,\cdot)-\tilde{v}^{\epsilon}_{t_{i-1}}(x,\cdot) \right)^{M_i}\right)&=\sum_{q_1=1}^{M_1}\dots\sum_{q_p=1}^{M_p}\prod_{i=1}^p\frac{M_i!}{2^{M_i/2}(M_i/2)!(2\pi \Sigma)^{M_i/4}}\nonumber\\
&\left( \frac {M_i} 2\right)!\int_{F_{M_1,\dots,M_p}}\frac{\prod_{j=1}^{N/2}a(w_{\sum_{i=1}^ju_i}(x))}{\prod_{j=1}^{N/2}u_j^{1/2}}d\textbf{u}\label{eqprodmoment}\\
&=\prod_{i=1}^pE(B_1^{M_i})E\left( \prod_{i=1}^p\left(\int_{t_{i-1}}^{t_i}a(w_s(x))dL_s'(0)\right)^{M_i/2}\right)\nonumber\\
&=E\left(\prod_{i=1}^p\left(\int_{t_{i-1}}^{t_i} \sqrt{a(w_s(x))}dB^{(i)}_{L_s'(0)}\right)^{M_i}\right)\label{eqprodmoment2}
\end{align}

Equation \eqref{eqprodmoment} corresponds to a product of moment of independent variable. We refer to  \cite[Appendix B.1.2]{MPphd} for the explicit computation of the moments of an integral driven by a Local time. This leads to the identification made at line\footnote{When $d>1$ the term $a(w_s(x))$ in the product in formula \eqref{eqprodmoment} is replaced by $\langle a(w_{s}(x))\theta_i,\theta_i \rangle$} \eqref{eqprodmoment2}.

\end{proof}

\section{Proof of theorem \ref{thmbirkhoff}: tightness.}\label{sectbirkhofftens}

In order to complete the proof of theorem \ref{thmbirkhoff} we will prove the tightness of $(v_t(x,\cdot))_{t\in [0,T]}$ thanks to the following Lemma \ref{encadprod}.

\begin{lem}\label{encadprod}
Let $p \in \N^*$, and $h_1,\dots,h_p :\mathbb{R}_+ \times M \rightarrow \mathbb{R}$ maps such that for any $i=1,\dots, p$, 
\begin{itemize}
\item $\int_{M} h_i(s,\cdot)d\mu =0$ for all $s \geq 0$,
\item $\sup_{s \geq 0}\sum_{a \in \Z}(1+|a|)^{2(1+\e)} \|h_i(s,.,a)\|_{\infty} <\infty$,
\item $h_i(s,.,a)$ is Lipschitz on $\Mp$ and %denoting $\|f\|_{lip}:=\sup_{x,y\in \Mp}\frac{|f(x)-f(y)|}{d(x,y)}$,
$\sup_{s \geq 0}\sum_{a \in \Z}(1+|a|)^{2(1+\e)} \|h_i(s,.,a)\|_{lip} <\infty.$ 
\end{itemize}
Then
\begin{align}\label{eqlemes}
\int_{\frac a \epsilon \leq u_1 \leq \dots \leq u_p \leq \frac b \epsilon }\left|E_{\mup}\left(\prod_{i=1}^ph_i(u_i,T^{\lfloor u_i \rfloor})  \right)\right|du_1\dots du_p=O\left( \left(\frac{b-a}{\epsilon}\right)^{\frac 1 2 \lfloor \frac p 2 \rfloor}\right).
\end{align}
 
\end{lem}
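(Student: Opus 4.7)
The proof will closely mirror the moment computation of Proposition~\ref{propbirkhoff1}, but we only require an upper bound so the exact asymptotic identification can be bypassed. First, because $u\mapsto T^{\lfloor u\rfloor}$ is constant on each unit interval $[n,n+1)$, we discretise: the left-hand side of~\eqref{eqlemes} is dominated, up to multiplicative constants depending only on the uniform bounds on the $h_i$, by
\[
\sum_{\frac a\epsilon \le n_1\le\dots\le n_p\le \frac b\epsilon}\left|E_{\mup}\Bigl(\prod_{i=1}^p h_i(u_i^\star,T^{n_i}\cdot)\Bigr)\right|,
\]
with representatives $u_i^\star\in[n_i,n_i+1)$. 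Next, exactly as in Lemma~\ref{aproxpro}, we lift each $h_i(s,\cdot,a)$ to $(\pM,\pt,\pmu)$ via Hypothesis~\ref{H2}, replacing it by a tower observable $g_{k_\epsilon,i}(s,\cdot,a)\circ\pi_2$ for the scale $k_\epsilon:=\lceil(\log(1/\epsilon))^2\rceil$; the resulting error is of order $\beta_0^{k_\epsilon}$ times bounded sums and is therefore negligible with respect to any polynomial in $\epsilon^{-1}$.

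After lifting and introducing the gaps $l_j:=n_j-n_{j-1}$, the expectation rewrites, as in~\eqref{expdecor}, as
\[
\sum_{\textbf{a}\in\Z^p}E_{\pmu}\Bigl(\bigl[(g_{k_\epsilon,p}(\cdot,a_p)\,H_{k_\epsilon,l_p,a_p-a_{p-1}})\circ\dots\circ(g_{k_\epsilon,1}(\cdot,a_1)\,H_{k_\epsilon,l_1,a_1-a_0})\bigr](1)\Bigr).
\]
We then apply the spectral decomposition $H_{k_\epsilon,l,a}=H^{(0)}_{k_\epsilon,l,a}+H^{(1)}_{k_\epsilon,l,a}$ of Proposition~\ref{spectre}, in which $H^{(0)}$ contributes a $\Pi_0/\sqrt{2\pi\Sigma l}$ factor and $H^{(1)}$ a fast-decaying remainder of norm $O((1+|a|)^{1+\e}/l^{1+\e/2})$. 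Expanding over the $2^p$ patterns $\textbf{e}\in\{0,1\}^p$ and using the factorisation identity~\eqref{magik}, the centering condition $\int_M h_i\,d\mu=0$ (which on the tower reads $\sum_a E_{\pmu}(g_{k_\epsilon,i}(s,\cdot,a))=0$) kills every pattern in which some $H^{(0)}$-index is ``isolated''; only the \emph{admissible} patterns, in which the $H^{(0)}$-indices are paired exactly in the sense of relations~\eqref{r1}--\eqref{r2}, survive.

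For each admissible pattern, the contribution is bounded by a product of $\lfloor p/2\rfloor$ factors of the form $C/\sqrt{l_{\alpha_i}}$ (one per $H^{(0)}$-index, coming from the projection $\Pi_0$), times pair-correlation factors coming from the $H^{(1)}$ on the adjacent gap in the pair and bounded single-point values. These last factors are controlled uniformly in $(s,\textbf{a})$ thanks to the summability hypotheses $\sup_{s}\sum_a(1+|a|)^{2(1+\e)}\|h_i(s,\cdot,a)\|_\infty<\infty$ and $\sup_s\sum_a\|h_i(s,\cdot,a)\|_{lip}<\infty$. Summing each paired factor yields $\sum_{l_{\alpha_i}\le(b-a)/\epsilon}l_{\alpha_i}^{-1/2}=O\bigl(\sqrt{(b-a)/\epsilon}\bigr)$, while the sums over the remaining ``fast-decaying'' gaps and over the charges $a_i$ produce $O(1)$ contributions (up to poly-logarithmic factors in $\epsilon^{-1}$ from $k_\epsilon$, absorbed into the $O$). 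With $\lfloor p/2\rfloor$ pairs, this gives the asserted bound $O\bigl(((b-a)/\epsilon)^{\lfloor p/2\rfloor/2}\bigr)$. The main obstacle, exactly as in the proof of Proposition~\ref{propbirkhoff1}, is the combinatorics of mixed $H^{(0)}/H^{(1)}$ patterns---especially when $p$ is odd, in which case one index remains unpaired and has to be absorbed into the $H^{(1)}$-contribution of a neighbouring gap via the centering, so as not to generate a spurious $\sqrt{(b-a)/\epsilon}$ factor---and in checking that no pattern ever produces more than $\lfloor p/2\rfloor$ factors of $1/\sqrt{l}$.
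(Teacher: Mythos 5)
Your proposal follows essentially the same path as the paper's proof: discretise the integral, lift to the Young tower via Hypothesis \ref{H2} at scale $k_\epsilon$, apply the spectral splitting $H_{k_\epsilon,l,a}=H^{(0)}+H^{(1)}$ from Proposition \ref{spectre}, use the centering of the $h_i$ to kill any pattern $\textbf{e}$ whose $H^{(0)}$-indices are not paired, and bound each surviving (admissible) pattern by a product of $\lfloor p/2\rfloor$ factors $l^{-1/2}$ whose marginal sums yield $\sqrt{(b-a)/\epsilon}$ each. That is exactly the paper's argument, which additionally notes that since every $N_i=1$ here only relation \eqref{r1} can occur (so you may drop the reference to \eqref{r2}), and which writes the modified operator $b^{\textbf{e}}_{a_0,m,n,\textbf{l}}$ with a built-in supremum over the time parameters to make the uniformity in $s$ explicit, which you only assert. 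One small imprecision in your last sentence: for odd $p$ the unpaired index has nothing to be ``absorbed''; either it carries $e=1$ and contributes an $O(1)$ summable $H^{(1)}$-factor, or it carries $e=0$ and the term is exactly zero by centering (as the paper notes: $e_p=0$ forces $\sum_a E_{\pmu}(P^{2k_\epsilon}h_{p,k_\epsilon}(u_p,\cdot,a))\,E_{\mup}(\cdots)=0$). Either way no extra $\sqrt{(b-a)/\epsilon}$ factor arises, which is what you intended. The core of the argument is sound and matches the paper.
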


\begin{proof}
Let $k_\epsilon:=\lceil \ln\left( \frac{b-a}{\epsilon} \right)^2\rceil$ and $h_{i,k_\epsilon}(x,.,a):\pM\rightarrow \mathbb R$ an uniform approximation from  formula \eqref{unif} page \pageref{unif} of a map $h_i(x,.,a)$ satisfying the Hypothesis \ref{H2}. According to Lemma \ref{aproxpro}, we will prove the asymptotic behaviour \eqref{eqlemes} by estimating the following term
$$
\epsilon^{\frac 1 2 \lfloor \frac p 2 \rfloor}\int_{\frac a \epsilon \leq u_1 \leq \dots \leq u_p \leq \frac b \epsilon }\left|E_{\mup}\left(\prod_{i=1}^ph_{i,k_\epsilon}(u_i,T^{\lfloor u_i \rfloor})  \right)\right|du_1\dots du_p.
$$ 
This term may be bounded as follows,
\begin{align*}
&\epsilon^{\frac 1 2 \lfloor \frac p 2 \rfloor}\int_{\frac a \epsilon \leq u_1 \leq \dots \leq u_p \leq \frac b \epsilon }\left|E_{\mup}\left(\prod_{i=1}^ph_{i,k_\epsilon}(u_i,T^{\lfloor u_i \rfloor})  \right)\right|d\textbf{u}\leq \epsilon^{\frac 1 2 \lfloor \frac p 2 \rfloor}\sum_{\lfloor \frac a \epsilon \rfloor \leq n_1 \leq \dots \leq n_p \leq \lfloor\frac b \epsilon \rfloor} \sup_{\textbf{u}}\left|E_{\mup}\left(\prod_{i=1}^ph_{i,k_\epsilon}(u_i,T^{n_i})  \right)\right|,\\
\end{align*}
We re-use the notation $b_{a_0,m,n,\textbf{l},\Ng}^{\textbf{e}}(\cdot)$ of section \ref{secbirk} where we substitute \\ $g_{k_\epsilon}(w_{\epsilon (\ls_p+m)}(x),.,a_p)^{N_p}$ by $h_{i,k_\epsilon}(x,.,a_q)$, i.e for $h\in \B$ :
\begin{align*}
&b_{a_0,m,n,\textbf{l}}^{\textbf{e}}(h):=\sup_{\textbf{u}\in \Z^p}\left|\sum_{\textbf{a}\in \Z^p}E_{\pmu}\left(P^{2k_\epsilon}\left(  (h_{p,k_\epsilon}(u_p,.,a_p)H_{k_\epsilon,l_q,a_q-a_{p-1}}^{(e_p)})\circ
\dots \circ (h_{1,k_\epsilon}(u_1,.,a_1)H_{k_\epsilon,l_1,a_1-a_0}^{(e_1)}) (h) \right) \right)\right|.
\end{align*}
Relations \eqref{unpourotus} and
\eqref{erreurptk} still apply, thus the only \textbf{admissible} vectors $\textbf{e}$ are those satisfying relation \eqref{r1}. Thus  %\marginnotemx{\\
%!!! vérifier}\\
an element $\textbf{e}$ satisfies $e_{2i}=1$ and $e_{2i-1}=0$ for any $1\leq i\leq p$ (otherwise 
 $b_{a_0,m,n,\textbf{l},\Ng}^{\textbf{e}}(h):=\sup_{u_1,\dots,u_p \in \Z}\left|\sum_{a_1,\dots,a_p \in \Z}E_{\pmu}\left(P^{2k_\epsilon}  h_{p,k_\epsilon}(u_p,.,a_p)\right)E_{\mup}(\cdot)\right|=0$). Thus when introducing the notation $\textbf{l}(\textbf u)$ for the vector with coordinates $l_i(\textbf{u})=\lfloor u_i \rfloor-\lfloor u_{i-1} \rfloor$,
\begin{comment}
ce qui impose que seuls certains des vecteurs \textbf{admissible}s $\textbf{e}$ vérifiant \eqref{r1} 
peuvent avoir un poids dans \eqref{eqlemes}. Il s'agit donc des vecteurs $\textbf{e}$ qui s'écrivent comme alternance de $0$ et 
de $1$ et nécessairement $e_{2p}=1$ (autrement\\
 $b_{a_0,m,n,\textbf{l},\Ng}^{\textbf{e}}(h):=\sup_{u_1,\dots,u_p \in \Z}\left|\sum_{a_1,\dots,a_p \in \Z}E_{\pmu}\left(P^{2k_\epsilon}  h_{p,k_\epsilon}(u_p,.,a_p)\right)E_{\mup}(\cdot.\cdot)\right|=0$). Donc le seul vecteur \textbf{e} \textbf{admissible} vérifie $e_{2i}=1$ et $e_{2i-1}=0$ pour $1\leq i\leq p$. Par conséquent, en notant $\textbf{l}(\textbf u)$ le vecteur tel que $l_i(\textbf{u})=\lfloor u_i \rfloor-\lfloor u_{i-1} \rfloor$ 
 \end{comment}

\begin{align}
\lim_{\epsilon \rightarrow 0}&\epsilon^{\frac 1 2 \lfloor \frac {2p} 2 \rfloor}\int_{\frac a \epsilon \leq u_1 \leq \dots \leq u_{2p} \leq \frac b \epsilon }\left|E_{\mup}\left(\prod_{i=1}^{2p}h_{i,k_\epsilon}(u_i,T^{\lfloor u_i \rfloor})  \right)\right|d\textbf{u}\nonumber\\
&\leq \lim_{\epsilon \rightarrow 0}\epsilon^{\frac 1 2 \lfloor  p  \rfloor}\int_{\frac a \epsilon \leq u_1 \leq \dots \leq u_{2p} \leq \frac b \epsilon }\prod_{i=1}^pb_{0,0,\epsilon,\textbf{l}(\textbf{u})}^{\textbf{e}}(1)d\textbf{u}\nonumber\\
&\leq \lim_{\epsilon \rightarrow 0}\epsilon^{\frac 1 2 \lfloor  p  \rfloor}\int_{\frac a \epsilon \leq u_1 \leq \dots \leq u_{2p} \leq \frac b \epsilon }\prod_{i=1}^p \frac{1}{2\pi \Sigma (\lfloor u_{2i-1} \rfloor-\lfloor u_{2(i-1)} \rfloor)^{1/2}}\nonumber\\ 
&\sup_{t_{2i},t_{2i-1} \in \mathbb{R}}\left|\sum_{a\in \Z}\hat \Xi_{\epsilon,2,(0,\lfloor u_{2i} \rfloor-\lfloor u_{2i-1} \rfloor),(1,1)}(h_{i,k_\epsilon},(t_{2i-1},t_{2i}),(0,a))\right|d\textbf{u}\nonumber\\
&\leq \lim_{\epsilon \rightarrow 0}\frac{1}{\left(2\pi \Sigma \right)^{1/2}}\sum_{l \in \Z}\sup_{t_{2i},t_{2i-1} \in \mathbb{R}}\left|\hat \Xi_{\epsilon,2,(0,l),(1,1)}(h_{i,k_\epsilon},(t_{2i-1},t_{2i}),(0,a))\right|\label{somconv}\\
&\epsilon^{\frac 1 2 \lfloor  p  \rfloor}\int_{\frac a \epsilon \leq s_1 \leq \frac b \epsilon}\int_{0 \leq s_3 \leq \dots \leq s_{2p-1} \leq \frac {b-a} \epsilon}\prod_{i=1}^p \frac{1}{s_{2i-1}^{1/2}}d\textbf{u},\label{intconv}
\end{align}
where $\hat \Xi_{\epsilon,2,(0,l),(1,1)}(h_{i,k_\epsilon},(t_{2i-1},t_{2i}),(0,a))$ is a notation corresponding to the following quantity
$\hat \Xi_{\epsilon,2,(0,l),(1,1)}(h_{i,k_\epsilon},(t_{2i-1},t_{2i}),(0,a))=E_{\pmu}\left(\left( \prod_{i=1}^q g_{k_\epsilon}(z_i,\pt^{l_i}(\cdot),a_i)^{N_i}1_{\{S_{l_i}=a_i-a_{i-1}\}}\circ \pt^{k_\epsilon} \right)\circ \pt^{\ls_{i-1}} \right)$.
As done in equation \eqref{2eqconvsommoment}, the sum $\sum_{l \in \Z}\sup_{t_{2i},t_{2i-1} \in \mathbb{R}}\left|E_{\mup}\left(h_{i,k_\epsilon}(t_{2i},T^{|l|}\cdot)h_{i,k_\epsilon}(t_{2i-1},\cdot)  \right)\right|$ converges,
thus according to Lemma \ref{convv}, the sum appearing in \eqref{somconv} converges, the asymptotic behavior of formula \eqref{eqlemes} is thus given by the asymptotic behavior from formula \eqref{intconv}. To conclude, we compute the integral in formula \eqref{intconv} using the following estimates: in one hand
$$
\int_{0}^{\frac {b-a} \epsilon}\frac 1 {u^{1/2}} du=\sqrt{\frac{b-a}{\epsilon}}
$$
and on the other hand, we use inequality $b^2-a^2\leq (b-a)^2$ for $b\geq a\geq 0$, i.e $\sqrt{b^2-a^2}\geq (b-a)=\sqrt{b^2}-\sqrt{a^2}$ :
$$
\int_{\frac a \epsilon}^{\frac b \epsilon}\frac 1 {u^{1/2}} du=\sqrt{\frac{b}{\epsilon}}-\sqrt{\frac{a}{\epsilon}}\leq \sqrt{\frac{b-a}{\epsilon}}.
$$
Thus we obtain formula \eqref{eqlemes} proving the lemma.
\end{proof}

\begin{prop}\label{tensvt}
For any $S>0$, the family of processes $\left(\left(  v_t^\epsilon \right)_{t\in [0,T]}\right)_{\epsilon >0}$ is tight in $D([0,T])$ with metric $\|.\|_\infty$. Furthermore\footnote{When $d\geq 2$ it is enough to prove tightness of  $(\langle \theta,v_t^\epsilon \rangle)_{t\geq 0}$ for any $\theta \in \mathbb R^d$. The proof  follows the same path replacing $(v_t^\epsilon)_{t\geq 0}$ by  $(\langle \theta,v_t^\epsilon \rangle)_{t\geq 0}$.}, there is some constant $K>0$ such that for any $p\in \N^*$ ,
\begin{align*}
    \| v_t^\epsilon -  v_s^\epsilon \|_{L^{2p}_{\mup}}\leq K \left(t-s\right)^{\frac 1 4}
\end{align*}
\end{prop}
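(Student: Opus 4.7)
The plan is to derive the moment estimate $\|v_t^\epsilon - v_s^\epsilon\|_{L^{2p}_{\mup}} \leq K(t-s)^{1/4}$ directly from Lemma \ref{encadprod} and then to invoke the Kolmogorov--Chentsov tightness criterion. Since $\sum_{a \in \Z}\|F(x,\cdot,a)\|_\infty$ is finite uniformly in $x$, the map $F$ is bounded and hence $(v_t^\epsilon)_{t\in[0,T]}$ is (almost surely) continuous in $t$, so tightness in $D([0,T])$ with $\|\cdot\|_\infty$ will follow from tightness in $C([0,T])$.

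First, I would write the increment as
\[
v_t^\epsilon - v_s^\epsilon = \epsilon^{1/4}\int_{s/\epsilon}^{t/\epsilon} F(w_{\epsilon u}(x),T^{\lfloor u\rfloor}\omega)\,du
\]
and expand the $2p$-th moment. Using the symmetry of the integrand in the variables $u_1,\dots,u_{2p}$:
\[
E_{\mup}\!\left[(v_t^\epsilon-v_s^\epsilon)^{2p}\right] = (2p)!\,\epsilon^{p/2}\int_{s/\epsilon\leq u_1\leq\cdots\leq u_{2p}\leq t/\epsilon} E_{\mup}\!\left[\prod_{i=1}^{2p}F(w_{\epsilon u_i}(x),T^{\lfloor u_i\rfloor}\cdot)\right]du_1\cdots du_{2p}.
\]
Next I would apply Lemma \ref{encadprod} with the $2p$ maps $h_i(u,\omega):=F(w_{\epsilon u}(x),\omega)$: the zero $\mu$-integral, summability and Lipschitz assumptions on $F$ translate directly into the hypotheses of the lemma, uniformly in $x$ and in $\epsilon$. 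The lemma then yields
\[
\int_{s/\epsilon\leq u_1\leq\cdots\leq u_{2p}\leq t/\epsilon}\left|E_{\mup}\!\left[\prod_{i=1}^{2p}h_i(u_i,T^{\lfloor u_i\rfloor}\cdot)\right]\right|du_1\cdots du_{2p}=O\!\left(\left(\tfrac{t-s}{\epsilon}\right)^{p/2}\right),
\]
so combining with the prefactor $\epsilon^{p/2}$ gives $E_{\mup}[(v_t^\epsilon-v_s^\epsilon)^{2p}]\leq K_p (t-s)^{p/2}$, which is exactly the announced inequality after taking the $(2p)$-th root.

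Finally, taking $p=3$ for instance, the Kolmogorov--Chentsov criterion gives
\[
E_{\mup}\!\left[(v_t^\epsilon-v_s^\epsilon)^{6}\right]\leq K_3(t-s)^{3/2},
\]
with $3/2>1$ and a constant independent of $\epsilon$, which guarantees tightness of $(v_\cdot^\epsilon)_{\epsilon>0}$ in $C([0,T])\subset D([0,T])$ endowed with the uniform norm (and moreover gives $(1/4-\delta)$-Hölder tightness for every $\delta>0$). The main obstacle I anticipate is bookkeeping: checking carefully that Lemma \ref{encadprod} is applied with the correct uniformity in the parameter $x$ (via $u\mapsto w_{\epsilon u}(x)$) so that the constant in the bound does not blow up with $\epsilon$; this uses that $s\mapsto w_s(x)$ is Lipschitz and the uniform assumptions on $F$ in its first variable stated in Theorem \ref{thmbirkhoff}. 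The higher-dimensional case is reduced as usual to the one-dimensional bound applied to each linear functional $\langle\theta,v_t^\epsilon\rangle$.
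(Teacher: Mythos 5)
Your proof is correct and rests on exactly the same key moment estimate as the paper's: you expand the $2p$-th moment of the increment $v_t^\epsilon - v_s^\epsilon$ into an ordered $2p$-fold integral and invoke Lemma~\ref{encadprod} with $h_i(u,\cdot)=F(w_{\epsilon u}(x),\cdot)$, uniformly in $\epsilon$ and $x$, to obtain $E_{\mup}[|v_t^\epsilon-v_s^\epsilon|^{2p}]\le K_p(t-s)^{p/2}$. Where you diverge from the paper is in the tightness criterion itself: the paper feeds the moment bound into Billingsley's product-of-increments criterion (\cite[Theorem 13.5]{billingsley2}), applying Cauchy--Schwarz to split $E[|v_s-v_r|^{2\beta}|v_t-v_s|^{2\beta}]$ and then choosing $\beta=2$, $\alpha=\beta/2$, $p=2\beta$; you instead note that $v_t^\epsilon$ is continuous in $t$ (indeed Lipschitz, since $F$ is bounded) and invoke the Kolmogorov--Chentsov tightness criterion in $C([0,T])$, taking $p=3$ so that the exponent $3/2>1$. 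Both routes are standard and valid; yours is a bit more direct because it exploits the continuity of paths (which the proposition exploits implicitly anyway, since it claims tightness for the sup-norm on $D$), whereas the paper's product criterion is the one usually stated for genuine Skorokhod-space tightness without the continuity shortcut. Two small bookkeeping points worth fixing in your write-up: the intermediate display should carry an absolute value around the expectation before applying Lemma~\ref{encadprod} (which bounds the integral of $|E(\cdots)|$), and it is worth saying in one line that $v_0^\epsilon=0$ so the tightness-at-time-zero condition of the Kolmogorov criterion is automatic.
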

\begin{proof}
According to Billingsley Theorem \cite[theorem 13.5]{billingsley2}, it is enough to show that $\beta \geq 0$, $\alpha >\frac 1 2$ and $C>0$, such that for any $0 \leq r \leq s \leq t \leq T$,
\begin{align}\label{prouveca}
E_{\mup}\left( |\tilde v_s^\epsilon - \tilde v_r^\epsilon|^{2\beta} |\tilde v_t^\epsilon - \tilde v_s^\epsilon|^{2\beta} \right) \leq C(t-r)^{2\alpha}.
\end{align} 

Then  Cauchy-Schwarz inequality gives, 
\begin{align}\label{olderun}
E_{\mup}\left( | v_s^\epsilon -  v_r^\epsilon|^{2\beta} | v_t^\epsilon -  v_s^\epsilon|^{2\beta} \right) \leq \| v_s^\epsilon - v_r^\epsilon \|_{L^{4\beta}_{\mup}}^{2\beta}\| v_t^\epsilon -  v_s^\epsilon \|_{L^{4\beta}_{\mup}}^{2\beta}.
\end{align}
Then we apply Lemma \ref{encadprod} for $p \in \N^*$ on $h_i(s,\cdot)=\F(w_{\epsilon s}(x),\cdot)$,
\begin{align}
\| v_t^\epsilon -  v_s^\epsilon \|_{L^{2p}_{\mup}}&=\left\|\epsilon^{1/4}\int_{\frac s \epsilon}^{\frac t \epsilon}\F(w_{\epsilon u}(x),T^{\lfloor u \rfloor}(\cdot))du \right\|_{L^{2p}_{\mup}}\nonumber\\
&=\epsilon^{1/4}\left(p!\int_{\frac s \epsilon \leq u_1 \leq \dots \leq u_{2p} \leq \frac t \epsilon }\left|E\left(\prod_{i=1}^p\F(w_{\epsilon u_i}(x),T^{\lfloor u_i \rfloor}(\cdot))\right)\right|du_1 \dots du_p\right)\nonumber\\
&\leq K \left(t-s\right)^{\frac 1 4}.\label{lptens}
\end{align}
Then we introduce this inequality in relation \eqref{olderun} and we conclude the tightness by checking the assumption from \cite[theorem 13.5]{billingsley2} with parameters $\beta=2$, $\alpha=\frac{\beta}{2}>\frac 1 2$ by choosing $p=2\beta$  :
\begin{align*}
E_{\mup}\left( | v_s^\epsilon -  v_r^\epsilon|^{2\beta} | v_t^\epsilon -  v_s^\epsilon|^{2\beta} \right) &\leq K(s-r)^{\frac{4\beta}{8}}(t-s)^{\frac{4\beta}{8}}\\
&\leq K (t-r)^{2\alpha}.
\end{align*}
\end{proof}

\section{Averaging : Proof of theorem \ref{thmequadif}.}\label{secpreuvedif}
%\ref{thmequadif}
The proof of the main theorem \ref{thmequadif} relies on the statements of theorem \ref{thmbirkhoff} and Lemma \ref{encadprod}.
To control the error term $e_t^{\epsilon}$, we will give a variational expression of its that we will control using classical Grönwall lemma and the behavior of the perturbed sum $v_t^\epsilon$ that we recall here
$$
v_t^\epsilon(x,\omega):=\epsilon^{1/4}\int_0^{t/\epsilon} \F(w_{\epsilon s}(x),T^{\lfloor s \rfloor}\omega)ds,
$$
for $x \in \mathbb R^d$, $\omega \in M$ and $t \in [0,S]$.

\begin{proof}[Proof of theorem \ref{thmequadif}]
 Through this proof, we keep the notations from theorem \ref{thmequadif} and we first provide a loose control on $(e_t^\epsilon)_{t\geq 0}$ with the following Lemma \ref{errlp}.

\begin{lem}\label{errlp}
For any $S>0$ and $p \geq 1$, the error term $e_t^\epsilon(x,\cdot)=x_t^\epsilon(x,\cdot)-w_t^\epsilon(x,\cdot)$ satisfies,
\begin{align}\label{trucunun}
\sup_{t \in [0,T']}\|\epsilon^{-3/4}e_t^\epsilon(x,\cdot)\|_{L^p_{\mup}}\underset{\epsilon \rightarrow 0}=O(1),
\end{align}
and the family $\left( \left(\epsilon^{-3/4}e_t^\epsilon(x,\cdot)\right)_{t\geq 0}\right)_{\epsilon >0}$ is tight in $C^0([0,T'],\mathbb{R})$.
\end{lem}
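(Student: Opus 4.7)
The plan is to apply Gronwall's inequality to the integral formulation of the error term, thereby reducing the estimates to those already available for the perturbed Birkhoff sum $v_t^\epsilon$ in Proposition~\ref{tensvt}. I would first subtract the integrated versions of~\eqref{eqperturbdifftf} and~\eqref{eqomega} to write
\[
e_t^\epsilon(x,\omega) = \int_0^t F(w_s(x),T^{\lfloor s/\epsilon\rfloor}\omega)\,ds + \int_0^t \Delta_s^\epsilon\,ds,
\]
where $\Delta_s^\epsilon := F(x_s^\epsilon,T^{\lfloor s/\epsilon\rfloor}\omega)-F(w_s(x),T^{\lfloor s/\epsilon\rfloor}\omega) + \overline F(x_s^\epsilon)-\overline F(w_s(x))$ satisfies $|\Delta_s^\epsilon|\leq L|e_s^\epsilon|$ thanks to the uniform Lipschitz assumption on $F(\cdot,\omega)$ and the boundedness of $D\overline F$. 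The change of variables $u = s/\epsilon$ identifies the first integral as $\epsilon^{3/4} v_t^\epsilon(x,\omega)$. Gronwall's inequality applied to the resulting majoration $|e_t^\epsilon|\leq \epsilon^{3/4}\sup_{s\in [0,T']}|v_s^\epsilon|+ L\int_0^t|e_u^\epsilon|\,du$ then yields the pointwise bound
\[
\epsilon^{-3/4}|e_t^\epsilon(x,\omega)| \leq e^{LT'}\sup_{s\in [0,T']}|v_s^\epsilon(x,\omega)|.
\]

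Taking $L^p_{\mup}$-norms reduces~\eqref{trucunun} to the uniform bound $\|\sup_{s\in [0,T']}|v_s^\epsilon|\|_{L^p_{\mup}}=O(1)$. I would deduce this from the moment estimate $\|v_t^\epsilon-v_s^\epsilon\|_{L^{2q}_{\mup}}\leq K(t-s)^{1/4}$ of Proposition~\ref{tensvt} via a Kolmogorov-Chentsov-Garsia-Rodemich-Rumsey argument: choosing $q$ large enough that $q/2>1$ yields a uniform-in-$\epsilon$ $L^{2q}$ estimate on the $\alpha$-H\"older norm of $v^\epsilon$ for any $\alpha<\tfrac14-\tfrac{1}{2q}$, which in particular controls $\sup_{s\in [0,T']}|v_s^\epsilon|$ since $v_0^\epsilon=0$.

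For the tightness of $(\epsilon^{-3/4}e_\cdot^\epsilon(x,\cdot))_\epsilon$ in $C^0([0,T'],\mathbb R)$, I would apply the same identity to the increments and use the already established $L^p$ bound on $\sup_u \epsilon^{-3/4}|e_u^\epsilon|$ to derive
\[
\|\epsilon^{-3/4}(e_t^\epsilon - e_s^\epsilon)\|_{L^{2p}_{\mup}} \leq \|v_t^\epsilon - v_s^\epsilon\|_{L^{2p}_{\mup}} + L(t-s)\cdot O(1)\leq K'(t-s)^{1/4}
\]
uniformly in $\epsilon$, where the second term is absorbed by means of $(t-s)\leq T'^{3/4}(t-s)^{1/4}$. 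Together with $\epsilon^{-3/4}e_0^\epsilon=0$ and the almost-sure continuity of $t\mapsto e_t^\epsilon$ (as an integrated quantity), Billingsley's criterion already invoked in Proposition~\ref{tensvt} provides tightness in $C^0([0,T'],\mathbb R)$ upon choosing $p$ large enough. The main delicate point is the Kolmogorov-Chentsov step used to upgrade the pointwise $L^p$ bounds on $v_t^\epsilon$ to a uniform $L^p$ bound on $\sup_{t\in [0,T']}|v_t^\epsilon|$; all the rest is essentially a mechanical application of Gronwall and of the moment estimates already established in Section~\ref{sectbirkhofftens}.
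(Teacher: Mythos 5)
Your argument is correct and follows the same broad Gronwall strategy as the paper, but the Gronwall step itself is handled differently in a way worth flagging. The paper displays the pointwise bound $|\epsilon^{-3/4}e_t^\epsilon| \le |v_t^\epsilon| e^{T'(\Vert D_1F\Vert_\infty+\Vert D_1\overline F\Vert_\infty)}$, which is not literally a consequence of Gronwall because $t\mapsto |v_t^\epsilon(x,\omega)|$ is not nondecreasing; the intended reading is that one first takes $L^p_{\mup}$-norms in the integral inequality and applies Gronwall to the deterministic function $t\mapsto\|\epsilon^{-3/4}e_t^\epsilon\|_{L^p_{\mup}}$, for which only $\sup_{t\in[0,T']}\|v_t^\epsilon\|_{L^p_{\mup}}=O(1)$ is required, and this is immediately available from Proposition~\ref{tensvt} together with $v_0^\epsilon=0$. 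You instead keep the argument pointwise by first majorizing $|v_t^\epsilon|$ by $\sup_{s\le T'}|v_s^\epsilon|$, which makes the pointwise Gronwall bound rigorous, but then requires $\|\sup_{s\le T'}|v_s^\epsilon|\|_{L^p_{\mup}}=O(1)$, a genuinely stronger statement. Your Kolmogorov--Chentsov/Garsia--Rodemich--Rumsey step does deliver it from the increment estimate $\|v_t^\epsilon-v_s^\epsilon\|_{L^{2q}_{\mup}}\le K(t-s)^{1/4}$ with $q>2$, and the exponent bookkeeping ($\alpha<\tfrac14-\tfrac1{2q}$) is correct, so your proof closes. The trade-off is that your route is heavier than necessary: the paper's $L^p$-first Gronwall avoids any maximal-process estimate, whereas you import a nontrivial continuity theorem to obtain a uniform bound that the final conclusion does not actually need. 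Your tightness argument for $e^\epsilon$ is then essentially the paper's: the same increment inequality, absorption of the $L(t-s)$-term via $(t-s)\le T'^{3/4}(t-s)^{1/4}$ (the paper does this implicitly), Cauchy--Schwarz, and Billingsley's moment criterion.
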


\begin{proof}
We first prove relation \eqref{trucunun}. We recall that\\ $\overline{F}$, $(x_t^\epsilon(x,\cdot))_{t \in [0,T']}$ and $(w_t(x))_{t \in [0,T']}$ satisfy respectively relations \eqref{eqperturbdifftf} and \eqref{eqomega}. For any $\omega \in M$,
\begin{align}\label{fromulerere}
e_t^\epsilon(x,\omega)&=x_t^\epsilon(x,\omega)-w_t^\epsilon(x,\cdot)\nonumber\\
&=\epsilon^{3/4}v_t^\epsilon(x,\omega)+\int_0^tF\left(x_s^\epsilon(x,\omega),T^{\lfloor \frac s \epsilon \rfloor}(\omega)\right)-F\left(w_s(x),T^{\lfloor \frac s \epsilon \rfloor}(\omega)\right)ds \nonumber\\
&+\int_0^t\overline F(x_s^\epsilon(x,\omega))-\overline F(w_s(x))ds.
\end{align}
%With\\ $v_t^\epsilon(x,\omega):=\epsilon^{1/4}\int_0^{\frac t \epsilon}\F\left(w_{\epsilon s}(x),T^{\lfloor s \rfloor}(\omega)\right)ds=\epsilon^{-3/4}\int_0^t\F\left(w_s(x),T^{\lfloor \frac s \epsilon \rfloor}(\omega)\right)ds$.
We deduce from equation \eqref{fromulerere} and the inequality of finite increased,
$$
\left|\epsilon^{-3/4}e_t^\epsilon(x,w)\right|\leq |v_t^\epsilon(x,w)|+\int_0^t\left(\|D_1F\|_{\infty}+\|D_1\overline F\|_{\infty}\right)\left|\epsilon^{-3/4}e_s^\epsilon(x,w)\right|ds.
$$
Then we apply Grönwall inequality and take the $L^p_{\mup}$ norm, 
$$
\left|\epsilon^{-3/4}e_t^\epsilon(x,\omega)\right|\leq |v_t^\epsilon(x,\omega)|e^{T'\left(\|D_1F\|_{\infty}+\|D_1\overline F\|_{\infty}\right)}.
$$
Thus for $p\in \N^*$,
$$
\sup_{t \in [0,T]}\|\epsilon^{-3/4}e_t^\epsilon(x,\cdot)\|_{L^p_{\mup}}\leq \sup_{t \in [0,T']}\| v_t^\epsilon(x,\cdot)\|_{L^p_{\mup}}e^{T'\left(\|D_1F\|_{\infty}+\|D_1\overline F\|_{\infty}\right)}
$$
According to Lemma \ref{encadprod}, $\| v_t^\epsilon(x,\cdot)\|_{L^p_{\mup}}$ is uniformly bounded in $\epsilon$, thus we deduce \eqref{trucunun}. \\
To prove the tightness, % d'après \ref{tensbil}, % et le point \eqref{olderun} page \pageref{olderun},
it is enough to prove that for any $p \in \N^*$ there is $C>0$ such that 
\begin{align}\label{eqbiles}
\sup_{0\leq s\leq t \leq T'}\|\epsilon^{-3/4}(e_t^\epsilon(x,\cdot)-e_s^\epsilon(x,\cdot))\|_{L^p_{\mup}}\leq C(t-s)^{\frac 1 4}.
\end{align}
If we suppose relation  \eqref{eqbiles} holds,  we can then apply Cauchy-Schwarz formula with $p=2\beta$ to obtain the following relation
\begin{align*}
E_{\mup}\left( | e_s^\epsilon -  e_r^\epsilon|^{2\beta} | e_t^\epsilon -  e_s^\epsilon|^{2\beta} \right) &\leq K(s-r)^{\frac{4\beta}{8}}(t-s)^{\frac{4\beta}{8}}\\
&\leq K (t-r)^{2\alpha}.
\end{align*}
This relation corresponds to the tightness condition of theorem 15.6 from \cite[theorem 13.5]{billingsley2} when  choosing $\beta=2$ and $\alpha=\frac{\beta}{2}>\frac 1 2$ thus proving the tightness of $(e_t^\epsilon)_{t\geq 0}$ in Lemma \ref{errlp}.

To prove relation \eqref{eqbiles}, we fix $0\leq s \leq t \leq T'$ and obtain the following relation by applying the mean value theorem within formula \eqref{fromulerere}  :
$$
\epsilon^{-3/4}|e_t^\epsilon(x,\cdot)-e_s^\epsilon(x,\cdot)| \leq | v_t^\epsilon(x,\cdot)- v_s^\epsilon(x,\cdot)|+\int_s^t\left(\|D_1F\|_{\infty}+\|D_1\overline F\|_{\infty}\right)\left|\frac{e^\epsilon_u(x,\omega)}{\epsilon^{3/4}}\right|du.
$$
Then we conclude from Lemma \ref{encadprod} and the estimate \eqref{tensvt} from theorem \ref{thmbirkhoff} that for any $p\in \N^*$,
\begin{align*}
\epsilon^{-3/4}\|e_t^\epsilon(x,\cdot)-e_s^\epsilon(x,\cdot)\|_{L^{2p}_{\mup}}&\leq \| v_t^\epsilon(x,\cdot)- v_s^\epsilon(x,\cdot)\|_{L^{2p}_{\mup}}\\
&+(t-s)\left(\|D_1F\|_{\infty}+\|D_1\overline F\|_{\infty}\right)\sup_{u \in [0,T']}\left|\frac{e^\epsilon_u(x,\omega)}{\epsilon^{3/4}}\right|\\
&\leq C(t-s)^{\frac 1 4}.
\end{align*}

\end{proof}
We will now compare through variational analysis the error term $(e_t^\epsilon)_{t\geq 0}$ with the solution
$(y_t^\epsilon(x,\omega))_{t\geq 0}$ of the following differential equation :
\begin{align}\label{equdiffun}
y_t^\epsilon(x,\omega)= v_t^\epsilon(x,\omega)+\int_0^t D\overline F(w_s(x)).y_s^\epsilon(x,\omega)ds.
\end{align}
An explicit expression for $(y_t^\epsilon(x,\omega))_{t\geq 0}$ is given by 
$\left(y_t^\epsilon(x,\omega)\right)_{t \in [0,T']}=\mathcal{F}\left(\left( v_t^\epsilon(x,\omega)\right)_{t \in [0,T']}\right)$, where $\mathcal{F}$ is a bounded linear operator on $C^0([0,T'],\mathbb{R}^d)$ defined as
\begin{align*}
\mathcal{F}((v_t)_{t\geq 0}):=&\left( v_t +\int_0^t D\overline F(w_s(x))v_s\exp\left(\int_s^tD\overline F(w_u(x))du\right)ds\right)_{t \in [0,S]}.\\
\end{align*}

 The convergence in law $\mathcal L_{\mup}$ on the space $(C^0([0,T'],\mathbb{R}^d),\|.\|_\infty)$ of $\left( v_t^\epsilon(x,\cdot)\right)_{t\in [0,T']}$ to the process $\left(\int_0^t \sqrt{a(w_s(x))}dB_{L_s'(0)}\right)_{t\geq 0}$ defined in theorem \ref{thmbirkhoff} implies that
\begin{align}\label{yconv}
\left( v_t^\epsilon(x,\cdot),y_t^\epsilon(x,\cdot)\right)_{t\in [0,T']} \overset{\mathcal{L}_{\mup},\|.\|_\infty}{\underset {\epsilon \rightarrow 0}{\rightarrow}}(\int_0^t \sqrt{a(w_s(x))}dB_{L_s'(0)},y_t(x,\cdot))_{t \in [0,T']}.
\end{align}

We then deduce from Lemma \ref{lemerorety} that follows the convergence of $(e_t^\epsilon)_{t\geq 0}$ for the law $\mathcal L_{\mup}$ to the same process $(y_t)_{t\geq 0}$.

\begin{lem}\label{lemerorety}
Under the assumptions of theorem \ref{thmequadif},
for any $p \geq 1$,
$$
\sup_{t \in [0,T']}\|\epsilon^{-3/4}e_t^\epsilon(x,\cdot)-y_t^\epsilon(x,\cdot)\|_{L^p_{\mup}}\underset{\epsilon \rightarrow 0}{=}O(\epsilon^{3/4}).
$$
\end{lem}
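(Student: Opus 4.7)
The plan is to derive a variational equation for $Y_t^\epsilon:=\epsilon^{-3/4}e_t^\epsilon$, compare it with \eqref{equdiffun}, and conclude via Gronwall's inequality; the new ingredient beyond Section~\ref{secbirk} will be an estimate of an oscillatory integral weighted by the slowly-varying process $Y^\epsilon$. Starting from \eqref{fromulerere} and applying the second-order Taylor expansion of $F$ and $\overline F$ in their first variable (which is licit by the $C^2$ hypotheses of Theorem~\ref{thmequadif}), the quadratic remainders are bounded in $L^p_{\mup}$ by
$$
C T'\sup_{s\leq T'}\|e_s^\epsilon\|_{L^{2p}_{\mup}}^2=O(\epsilon^{3/2})
$$
thanks to Lemma~\ref{errlp}. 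Since $\epsilon^{-3/4}\int_0^t F(w_s,T^{\lfloor s/\epsilon\rfloor}\omega)\,ds=v_t^\epsilon$, dividing \eqref{fromulerere} by $\epsilon^{3/4}$ yields
\begin{align*}
Y_t^\epsilon=v_t^\epsilon+\int_0^t D_1F(w_s,T^{\lfloor s/\epsilon\rfloor}\omega)\,Y_s^\epsilon\,ds+\int_0^t D\overline F(w_s)Y_s^\epsilon\,ds+\rho_t^\epsilon
\end{align*}
with $\sup_{t\leq T'}\|\rho_t^\epsilon\|_{L^p_{\mup}}=O(\epsilon^{3/4})$. Subtracting \eqref{equdiffun}, the difference $Z_t^\epsilon:=Y_t^\epsilon-y_t^\epsilon$ satisfies
\begin{align*}
Z_t^\epsilon=\mathcal A_t^\epsilon+\int_0^t D\overline F(w_s)Z_s^\epsilon\,ds+\rho_t^\epsilon,\qquad \mathcal A_t^\epsilon:=\int_0^t D_1F(w_s,T^{\lfloor s/\epsilon\rfloor}\omega)Y_s^\epsilon\,ds.
\end{align*}
Taking $L^p_{\mup}$-norms and applying Gronwall's inequality reduces the problem to proving $\sup_{t\leq T'}\|\mathcal A_t^\epsilon\|_{L^p_{\mup}}=O(\epsilon^{3/4})$.

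To set up this estimate I observe that the extra hypotheses imposed on $D_1F$ in Theorem~\ref{thmequadif} are exactly those required of $F$ in Theorem~\ref{thmbirkhoff}; in particular differentiation under the integral sign gives $\int_M D_1F(x,\cdot)\,d\mu=0$. Consequently Theorem~\ref{thmbirkhoff} and Proposition~\ref{tensvt} apply verbatim to $D_1F$, and the process $V_t^\epsilon:=\int_0^t D_1F(w_s,T^{\lfloor s/\epsilon\rfloor}\omega)\,ds=\epsilon^{3/4}v_t^{\epsilon,D_1F}$ satisfies
$$
\|V_t^\epsilon\|_{L^{2p}_{\mup}}=O(\epsilon^{3/4}),\qquad\|V_t^\epsilon-V_s^\epsilon\|_{L^{2p}_{\mup}}\leq C\epsilon^{3/4}(t-s)^{1/4},
$$
while Lemma~\ref{errlp} provides the matching Hölder regularity $\|Y_t^\epsilon-Y_s^\epsilon\|_{L^{2p}_{\mup}}\leq C(t-s)^{1/4}$.

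I then plan to discretize: pick a subdivision $0=s_0<s_1<\dots<s_N=t$ of step $\delta=\delta(\epsilon)$ and write
$$
\mathcal A_t^\epsilon=\sum_{k=0}^{N-1}Y_{s_k}^\epsilon\,(V_{s_{k+1}}^\epsilon-V_{s_k}^\epsilon)+\sum_{k=0}^{N-1}\int_{s_k}^{s_{k+1}}D_1F(w_s,T^{\lfloor s/\epsilon\rfloor}\omega)(Y_s^\epsilon-Y_{s_k}^\epsilon)\,ds.
$$
The second (corrective) sum is controlled by $CT'\delta^{1/4}$ in $L^p_{\mup}$ from the Hölder regularity of $Y^\epsilon$ and the uniform bound $\|D_1F\|_\infty<\infty$. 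For the main sum, the triangular inequality combined with Cauchy--Schwarz is too weak; instead I intend to carry out a second-moment computation that exploits the decorrelation of the Birkhoff increments $V_{s_{k+1}}^\epsilon-V_{s_k}^\epsilon$ on non-adjacent subintervals, along the same lines as the moment estimates already performed in the proof of Proposition~\ref{propbirkhoff1} and Lemma~\ref{encadprod}, but now with $Y_{s_k}^\epsilon$ inserted as an adapted, $L^{2p}$-bounded weight. Optimizing $\delta$ against these two contributions will then recover the target rate $O(\epsilon^{3/4})$. The main obstacle is precisely this last step: coupling the discrete-time decorrelation of the fast Birkhoff increments to the slow weight $Y^\epsilon$ with enough precision to reach the $\epsilon^{3/4}$ exponent rather than a weaker one, which will require re-running the key estimates of Section~\ref{secbirk} in the presence of this weight.
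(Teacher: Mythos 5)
Your Gronwall setup and the Taylor-remainder estimate via Lemma~\ref{errlp} agree with the paper, but your choice of decomposition leaves a genuine gap, and it is exactly the one you flag at the end. You subtract~\eqref{equdiffun} so that the Gronwall kernel is only $D\overline F$, and you are left to estimate
$\mathcal A_t^\epsilon=\int_0^t D_1F(w_s,T^{\lfloor s/\epsilon\rfloor}\omega)\,Y_s^\epsilon\,ds$
with $Y_s^\epsilon=\epsilon^{-3/4}e_s^\epsilon$ as weight. But $Y^\epsilon$ is the unknown error process, given only implicitly; it is $\omega$-dependent and, even after discretization, $Y_{s_k}^\epsilon$ is \emph{not} a passive $L^{2p}$-bounded constant but correlated with the whole trajectory, so ``re-running the moment estimates of Section~\ref{secbirk} with this adapted weight'' is a new and nontrivial undertaking, not a corollary of Proposition~\ref{tensvt} or Lemma~\ref{encadprod}.

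The paper sidesteps this by a different split: it puts the full derivative $D_1F'=D_1F+D\overline F$ into the Gronwall term, writing $Z_t^\epsilon=a_t^\epsilon-b_t^\epsilon+\int_0^tD_1F'(w_s,\cdot)Z_s^\epsilon\,ds$ with
$a_t^\epsilon:=Y_t^\epsilon-v_t^\epsilon-\int_0^tD_1F'(w_s,\cdot)Y_s^\epsilon\,ds$ (Taylor remainder, $O(\epsilon^{3/4})$ as you found) and
$b_t^\epsilon:=y_t^\epsilon-v_t^\epsilon-\int_0^tD_1F'(w_s,\cdot)y_s^\epsilon\,ds=-\int_0^tD_1F(w_s,\cdot)\,y_s^\epsilon\,ds$.
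The crucial gain is that the weight is now $y_s^\epsilon$, not $Y_s^\epsilon$, and $y^\epsilon=\mathcal F(v^\epsilon)$ is an \emph{explicit} linear functional of $v^\epsilon$. Substituting this formula turns $b_t^\epsilon$ into $\epsilon^{-3/4}\int_{0\le u\le s\le t}K_{u,s}\,D_1F(w_s,T^{\lfloor s/\epsilon\rfloor}\omega)\,F(w_u,T^{\lfloor u/\epsilon\rfloor}\omega)\,du\,ds$ with a bounded deterministic kernel $K_{u,s}$, and the $L^{2p}$ moment of this double integral is precisely of the form already controlled by Lemma~\ref{encadprod}; no adapted-weight generalization is needed. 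To repair your argument along the paper's lines, replace your $\mathcal A_t^\epsilon$ by the analogous term with $y_s^\epsilon$ as weight (equivalently, include $D_1F$ in the Gronwall kernel), then use the explicit expression for $y^\epsilon$ to reduce to the deterministic-kernel moment bound.
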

\begin{proof}
We introduce 
$$
a^\epsilon_t(x,\omega):=\epsilon^{-3/4}e_t^\epsilon(x,\omega)- v_t^\epsilon(x,\omega)-\epsilon^{-3/4}\int_0^tD_1F(w_s(x),T^{\lfloor \frac s \epsilon\rfloor}(\omega)).e_s^\epsilon(x,\omega)ds.\\
$$
%Supposons que $F(x,\cdot)$ soit dans $C^2_b([0,T],\mathbb{R}^d)$ uniformément en la première variable.
According to equation \eqref{fromulerere} and some Taylor expansion,
\begin{align*}
|a^\epsilon_t(x,\omega)|&=\epsilon^{-3/4}\left|\int_0^t\left(F'\left(x_s^\epsilon(x,\omega),T^{\lfloor \frac s \epsilon \rfloor}(\omega)\right)\right.\right.\\
&\left.\left.-F'\left(w_s(x),T^{\lfloor \frac s \epsilon \rfloor}(\omega)\right) \right) -D_1F'(w_s(x),T^{\lfloor \frac s \epsilon\rfloor}(\omega)).e_s^\epsilon(x,\omega)ds\right|\\
&\leq \epsilon^{-3/4}\left|\int_0^T\|D_1^2F'\|_{\infty}\left(e_s^\epsilon(x,\omega)\right)^2ds\right|\\
&\leq  \epsilon^{3/4}\left|\int_0^T\|D_1^2F'\|_{\infty}\left(\frac{e_s^\epsilon(x,\omega)}{\epsilon^{3/4}}\right)^2ds\right|.
\end{align*}
Thus, according to Lemma \ref{errlp}.
\begin{align}\label{bornep2}
\sup_{t \in [0,T]}&\left\|a_t^\epsilon(x,\omega)\right\|_{L^p_{\mup}}\leq \epsilon^{3/4}\|D_1^2F'\|_{\infty}T'\sup_{t \in [0,T']}\left\|\frac{e_t^\epsilon(x,\cdot)}{\epsilon^{3/4}}\right\|_{L^{2p}_{\mup}}^2=O\left(\epsilon^{3/4}\right).
\end{align}

We now introduce the process $(b_t^\epsilon(x,\cdot))_{t\geq 0}$ as follows,
\begin{align*}
b_t^\epsilon(x,\omega)&:=y_t^\epsilon(x,\omega)- v_t^\epsilon(x,\omega)-\int_0^t D_1 F'(w_s(x),T^{\lfloor \frac s \epsilon \lfloor}(\omega)).y_s^\epsilon(x,\omega)ds\\
&=\int_0^tD_1 \F(w_s(x),T^{\lfloor \frac s \epsilon \lfloor}(\omega))\left[\epsilon^{-3/4}\int_0^s\F(w_u(x),T^{\lfloor \frac u \epsilon \rfloor}(\omega))du\right. \\
&\left.+\int_0^sD\overline F(w_b(x))\exp\left(\int_b^sD\overline F(w_a(x))da\right)\int_0^b\F(w_u(x),T^{\lfloor \frac u \epsilon \rfloor}(\omega))du db\right]ds\\
&=\epsilon^{-\frac 3 4}\int_{0 \leq u \leq s \leq t}K_{u,s}D_1 \F(w_s(x),T^{\lfloor \frac s \epsilon \rfloor}(\omega)).\F(w_u(x),T^{\lfloor \frac u \epsilon \rfloor}(\omega))duds,
\end{align*}
where for any $u \leq s$, the quantity
$$
K_{u,s}:=1+\int_u^sD\overline F(w_b(x))\exp\left(\int_b^sD\overline F(w_a(x))da\right)db
$$
satisfies $\|K_{u,s}\|_\infty\leq 1+T'\|D\overline{F}\|_\infty e^{T\|D\overline{F}\|_\infty}=:K_0$.
Thus Lemma \ref{errlp} applies and we can conclude that for any $p \in \N^*$,
\begin{align*}
\|b_t^\epsilon\|_{L^{2p}_{\mup}}^{2p}&\leq K_0'^{2p} \epsilon^{-3p/2}\int_{0\leq u_1 \leq s_1 \leq t}\dots \int_{0\leq u_1 \leq s_1 \leq t}\\
& \left|E_{\mup}\left(\prod_{i=1}^{2p}D_1 \F(w_s(x),T^{\lfloor \frac s \epsilon \rfloor}(\omega)).\F(w_u(x),T^{\lfloor \frac u \epsilon \rfloor}(\omega))\right)\right|du_1ds_1\dots du_{2p}ds_{2p}\\
&=O\left(\epsilon^{-\frac{3p}{2}}\epsilon^{4p}\left(\frac{t}{\epsilon}\right)^{2p}\right)=O(S^p\epsilon^{\frac{3p}{2}}).
\end{align*}
Thus,
\begin{align}\label{bornb}
\sup_{t \in [0,T']}\|b_t^\epsilon(x,\cdot)\|_{L^p_{\mup}}=O(\epsilon^{\frac 3 4}).
\end{align}
%.We now merge each estimate in the following decomposition of $(e_t^\epsilon)_{t\geq 0}$
Now we apply Grönwall lemma on the following variational equation
%ce qui devient en comparant avec \eqref{bornep2}, 
\begin{align*}
&\epsilon^{-3/4}e_t^\epsilon(x,\omega)-y_t^\epsilon(x,\omega)=a_t^\epsilon(x,\omega)-b_t^\epsilon(x,\omega)\\
&+\int_0^tD_1F'(w_s(x),T^{\lfloor \frac s \epsilon \rfloor}(\omega))(\epsilon^{-\frac 3 4}e_s^\epsilon(x,\omega)-y_s^\epsilon(x,\omega))ds,
\end{align*}
and we obtain the following estimate
\begin{align*}
\left|\epsilon^{-3/4}e_t^\epsilon(x,\omega)-y_t^\epsilon(x,\omega)\right| \leq \left|a_t^\epsilon(x,\omega)-b_t^\epsilon(x,\omega)\right|e^{T'\|D_1F\|_{\infty}}.
\end{align*}
Then estimates \eqref{bornb} and \eqref{bornep2} provide the conclusion \eqref{lemerorety} of the lemma,
$$
\sup_{t \in [0,T']}\|\epsilon^{-3/4}e_t^\epsilon(x,\omega)-y_t^\epsilon(x,\omega)\|_{L^{2p}_{\mup}}=\sup_{t\in [0,T']}(\|a_t^\epsilon(x,\cdot)\|_{L^{2p}_{\mup}}+\|b_t^\epsilon(x,\cdot)\|_{L^{2p}_{\mup}}=O(\epsilon^{\frac 3 4}).
$$
\end{proof}

From Lemma \ref{lemerorety} above and the convergence in of $(y_t^\epsilon)_{t\geq 0}$ in \eqref{yconv}, we deduce the conclusion of theorem \ref{thmequadif} in the particular case where the probability law $\PP$ in the theorem is precisely the law $\mup$ on $\Mp\simeq M\times \{0\}$ :
\begin{align}\label{eqthmmup}
(\epsilon^{-3/4}e_t^\epsilon(x,\omega))_{t \in [0,T']}\underset{\epsilon \rightarrow 0}{\overset{\mathcal{L}_{\mup},\|.\|_{\infty}}{\rightarrow}}(y_t)_{t \in [0,T']}.    
\end{align}
To prove the full theorem and get the conclusion for any probability measure $\PP$ absolutely continuous with respect to $\mu$, it is enough to prove that the assumptions from Zweimuller theorem \cite[Theorem 1]{zweimuller} hold. In our case, those assumptions can be summarized as the following condition :

\begin{itemize}
    \item \textbf{Condition} : for any $x\in \mathbb R ^d$ and $\omega \in M$,\\
    \begin{align}\label{eqcondzwei}
       \epsilon^{-3/4} \left\| \left(e_t^\epsilon(x,\omega)-e_t^\epsilon(x,T\omega)\right)_{t\geq 0} \right\|_\infty \xrightarrow[\epsilon \to 0]{}0.
    \end{align}
\end{itemize}

We introduce the notation 
\begin{align}\label{zweieq2}
u_t^{\epsilon}(x,\omega):=\frac{e_t^\epsilon(x_0,T(\omega))-e_t^\epsilon(x_0,\omega)}{\epsilon^{3/4}}=\frac{x_t^\epsilon(x_0,T(\omega))-x_t^\epsilon(x_0,\omega)}{\epsilon^{3/4}}.
\end{align}

From the perturbed differential equation \ref{pertx}, $(x_t^\epsilon)_{t\geq 0}$ satisfies the following variational inequality 
%the following We deduce from the variationnal \ref{eq} equation the following estimate
%\marginnotemx{vérifier que l'intervention de  l'intégrale de $t$ à $t-\epsilon$ ne pose pas de problème. remplacer $F$ et par $\bar F$ et "$\tilde F$"}
\begin{align*}
&|x_s^{\epsilon}(x_0,T(\omega))-x_s^{\epsilon}(x_\epsilon^\epsilon(x_0,\omega),T(\omega))|\leq |x_0-x_\epsilon^\epsilon(x_0,\omega)|\\
&+\int_0^s[F]|x_u^{\epsilon}(x_0,T(\omega))-x_u^{\epsilon}(x_\epsilon^\epsilon(x_0,\omega),T(\omega))|du,
\end{align*}
and thus applying Grönwall inequality on $|x_s^{\epsilon}(x_0,T(\omega))-x_s^{\epsilon}(x_\epsilon^\epsilon(x_0,\omega),T(\omega))|$, we deduce the following estimate
\begin{align}\label{eq:xs}
\sup_{s\in [0,S]}|x_s^{\epsilon}(x_0,T(\omega))-x_s^{\epsilon}(x_\epsilon^\epsilon(x_0,\omega),T(\omega))|=O(|x_0-x_\epsilon^\epsilon(x_0,\omega)|)=O(\epsilon).    
\end{align}

The \textbf{condition} given by equation \eqref{eqcondzwei} is then a consequence of the following estimate of $u_t^\epsilon(x,\omega)$ using the variational formulation of $(x_t^\epsilon)_{t\geq 0}$ and the estimate \eqref{eq:xs} above :

\begin{align*}
|u_t^{\epsilon}(x,\omega)|&=\left|\frac{x_t^\epsilon(x_0,T(\omega))-x_{t-\epsilon}^\epsilon(x_\epsilon^\epsilon(x_0,\omega),T(\omega))}{\epsilon^{3/4}}\right|\\
&\leq \epsilon^{-1/2}\left|\int_{t-\epsilon}^t F(x_s^{\epsilon}(x,T(\omega),T^{\lfloor \frac{s}{\epsilon}\rfloor}(T\omega))ds\right|\\
&+\epsilon^{-1/2}\int_0^{t-\epsilon}[F]|x_s^{\epsilon}(x_0,T(\omega))-x_s^{\epsilon}(x_\epsilon^\epsilon(x_0,\omega),T(\omega))|ds\\
&=O(\epsilon^{1/4}).
\end{align*}
Thus theorem \cite[Theorem 1]{zweimuller} applies and the convergence in law in \eqref{eqthmmup}holds for any measure $\PP$ absolutely continuous w.r to $\mu$ :
$$
(\epsilon^{-3/4}e_t^\epsilon(x,\omega))_{t \in [0,T']}\underset{\epsilon \rightarrow 0}{\overset{\mathcal{L}_{\PP},\|.\|_{\infty}}{\rightarrow}}(y_t)_{t \in [0,T']}.
$$ This ends the proof of theorem \ref{thmequadif}.
\end{proof}
\subsection*{Aknowledgement} This article complete some work initiated during my PhD thesis under the supervision of Françoise Pène whom I want to thank for leading me into this study providing precious advises as well as a strong editorial support.

\bibliography{biblio2}
\bibliographystyle{plain}
\end{document}